\documentclass[a4paper]{article}
\usepackage[colorlinks=true, hypertexnames=false]{hyperref}
\usepackage{enumitem}
\usepackage{mathtools}
\usepackage{amssymb, amsmath}
\usepackage[amsthm,thmmarks]{ntheorem}
\usepackage{mathrsfs}

\parskip 2mm
\parindent 0pt
{\theoremstyle{plain}
\newtheorem{proposition}{Proposition}[section]
\newtheorem{lemma}[proposition]{Lemma}
\newtheorem{theorem}[proposition]{Theorem}
\newtheorem{corollary}[proposition]{Corollary}
}


\renewcommand{\phi}{\varphi}
\renewcommand{\epsilon}{\varepsilon}
\renewcommand{\theta}{\vartheta}

\newcommand{\deph}{\textbf}
\newcommand{\set}[2]{\left\{ #1 \,\middle|\, #2 \right\}}

\newcommand{\lh}{{\ell_{\mathrm{H}}}}

\newcommand{\lj}{{\ell_{\mathrm{J}}}}
\newcommand{\lr}{{\ell_{\mathrm{r}}}}

\newcommand{\alev}[1]{\,\,\left[#1\right]}

\newcommand{\bs}{\boldsymbol}
\newcommand{\mup}[2]{\left(#1\right)_{#2}}

\newcommand{\GLinG}{\mathcal{GL}}

\DeclareMathOperator{\GL}{GL}
\DeclareMathOperator{\SL}{SL}

\DeclareMathOperator{\diam}{diam}
\DeclareMathOperator{\rank}{rk}
\DeclareMathOperator{\id}{id}
\DeclareMathOperator{\chr}{char}
\DeclareMathOperator{\fin}{fin}

\title{Properties of Linearly Sofic Groups}
\author{Abel Stolz}
\date{\today}

\begin{document}
\maketitle
\begin{abstract}
 We consider (projectively) linearly sofic groups, i.e. groups which can be approximated using (projective) matrices over arbitrary fields, as a generalization of sofic groups. We generalize known results for sofic groups and groups which can be approximated with complex matrices, including the fact that free products of linearly sofic groups (using a fixed field) are linearly sofic.
\end{abstract}

\section{Introduction}
Recently approximation of groups using matrices has gained interest. This approach enriches the field of group approximation, dominated by the investigation of sofic groups and to a certain amount hyperlinear groups. Notably Arzhantseva and P\u{a}unescu in \cite{arzhantsevapaunescu13} studied groups that can be approximated using complex matrices, where in contrast to hyperlinear groups the metric used measures the rank of differences of matrices. Since this metric does not depend on the underlying field, the same approach works for matrix groups over arbitrary fields. We call groups allowing for this kind of approximation \emph{linearly sofic groups} (similarly to \cite{arzhantsevapaunescu13}). The class of linearly sofic groups is on the one hand an interesting candidate to produce an example of a non-sofic group, and on the other hand could add another point of view to the theory of sofic groups, assuming it will be proved to be contained in the class of sofic groups.

We follow some of the work in \cite{arzhantsevapaunescu13} closely, which in turn mimics theorems known to hold for sofic groups since the work of Elek and Szab\'o \cite{elekszabo05} and \cite{elekszabo06}. We thereby reproduce various results in somewhat broader generality. After initially introducing two means of approximation, using the above-mentioned rank difference metric and a projective version thereof, we show eventually that both approaches are equivalent. A new result is Theorem~\ref{thm:FreeProductsOfLinearlySoficGroups}, proving that free products of linearly sofic groups (using a fixed field) are linearly sofic. We still don't know whether this generalizes to free products amalgamated over amenable groups, as is the case for sofic groups. (Confer \cite{collinsdykema11}, \cite{elekszabo11} and \cite{paunescu11}.)

This article is organized as follows: In Section~\ref{sec:LengthFunctions} we introduce the basic definitions centering around length functions and the notion of group approximation.

In Section~\ref{sec:Amplification} we explain the important tool of amplification, which will be essential for the remainder of the article.

Section~\ref{sec:GroupApproximationWithMatrices} develops on group approximation as explained in Section~\ref{sec:LengthFunctions}, where the groups used for approximation are general linear groups over different fields. An important goal of investigation is to clarify the role of the underlying field, i.e. essentially the role of its characteristic. Also the relation with approximation in projective linear groups is examined.

In Section~\ref{sec:LinearlySoficAndProjectivelyLinearlySoficGroups} we show that the class of $K$-sofic groups (where $K$ is a field) satisfies certain permanence properties.

The contents of this article are part of the author's PhD thesis, handed in to the University of Leipzig on April, 23. 2013.

\textbf{Acknowledgements.}
I want to thank Professor Andreas Thom for his support and helpful guidance during the time I wrote my PhD thesis.

\section{Length functions and linear group approximation}\label{sec:LengthFunctions}
Let $G$ be a group. A function 
\[\ell\colon G\rightarrow [0,\infty[\]
 is called a \deph{pseudo length function} on $G$ if for all $g,h\in G$
\begin{enumerate}[label={\rm LF\arabic*}]
 \item $\ell(1)\geq 0$, $1\in G$,\label{def:LengthFunctionDefinite}
 \item $\ell(g)=\ell(g^{-1})$,\label{def:LengthFunctionSymmetry}
 \item $\ell(gh)\leq \ell(g)+\ell(h)$.\label{def:LengthFunctionTriangle}
\end{enumerate}
If moreover $\ell(hgh^{-1})=\ell(g)$ holds, then we call $\ell$ \deph{invariant}. If  $\ell(g)=0$ if and only if $g=1$, then $\ell$ is a \deph{length function}. The \deph{diameter} $\diam(G)$ of a group $G$ with pseudo length function $\ell$ is defined as $\sup_{g\in G}\ell(g)$.

A standard example of a length function is the \deph{Hamming length}
\[\lh(\pi)\coloneqq \frac{|\set{i\in [n]}{\pi(i)\neq i}|}{n}\]
for permutations $\pi$ in the symmetric group $S_n$ (where $[n]=\{1,\ldots, n\}$).

Given a finite-dimensional vector space $V$ we write $\GL(V)$ for all bijective linear transformations of $V$ and $\SL(V)$ for all linear transformations of $V$ of determinant $1$. When $V=K^n$ for some field $K$ we write $\GL_n(K)\coloneqq \GL(V)$ and $\SL_n(K)\coloneqq \SL(V)$. We will think of elements in $\GL_n(K)$ as matrices corresponding to the standard basis in $K^n$.

 If $V$ is a vector space over a field $K$ we will write $1$ for the identical mapping $V\rightarrow V$ and write simply $\alpha$ for the mapping $\alpha \cdot 1$, where $\alpha\in K$.

We define the \deph{rank length} on $\GL(V)$, where $V$ is an $n$-dimensional vector space over the field $K$ by
\[\lr(g)\coloneqq\frac{\rank(1-g)}{n}\]
We will also need the \deph{Jordan length}:
\[\lj(g)=\inf_{\alpha\in K^{\times}}\frac{\rank(\alpha - g)}{n}.\]

Confer \cite{stolzthom13} for a more detailed exposition of length functions, in particular for the verification that $\lr$ is an invariant length function and $\lj$ is an invariant pseudo length function.

Let $G_i$ be groups with invariant pseudo length functions $\ell_i$ of bounded diameter for all $i\in I$, where $I$ is an arbitrary index set. Let $\mathfrak{u}$ be a (non-principal) ultrafilter in $I$. We define the subset $N$ of the direct product $\prod_{i\in I}G_i$ to be
\[N\coloneqq \set{(g_i)_{i\in I}\in \prod_{i\in I}G_i}{\lim_{\mathfrak{u}}\ell_i(g_i)=0}.\]
Here $\lim_{\mathfrak{u}}\ell_i(g_i)$ is the \deph{ultralimit} of the numbers $\ell_i(g_i)$, i.e. the unique real number $x$ such that for all $\epsilon>0$ the set $\set{i\in I}{|x-\ell_i(g_i)|<\epsilon}$ is in $\mathfrak{u}$. Then the properties of $\ell_i$ imply that $N$ is a normal subgroup and we call the group
\[\mup{\prod_{i\in I}G_i}{\mathfrak{u}}\coloneqq \left.\prod_{i\in I}G_i\middle/N\right.\]
the \deph{metric ultraproduct} of the groups $G_i$. We write $P(i)\alev{\mathfrak{u}}$ if a property $P$ holds for $\mathfrak{u}$ almost all $i$.

Let $\mathscr{G}$ be a class of groups, each of which comes equipped with a pseudo length function $\ell$. Then a group $\Gamma$ is said to have the $\mathscr{G}$\deph{-approximation property} if for all $g\in \Gamma\setminus\{1\}$ there is $\delta_g>0$ such that for any $\epsilon>0$ and any finite subset $E\subset \Gamma$ there is a group $G\in\mathscr{G}$ and a mapping $\phi\colon \Gamma\rightarrow G$ such that
\begin{enumerate}[label={\rm AH\arabic*}]
 \item $\ell(\phi(1))\leq \epsilon$,\label{def:ApprPropOne}
 \item $\ell(\phi(g))\geq \delta_g$ for all $g\in E\setminus\{1\}$,\label{def:ApprPropSingleElement}
 \item $\ell(\phi(g)\phi(h)\phi(gh)^{-1})\leq \epsilon$ for all $g,h\in E$.\label{def:ApprPropMultiplication}
\end{enumerate}
The kind of mapping in this definition is called $(E,\epsilon)$\deph{-homomorphism} or less explicit \deph{almost homomorphism}. Note that almost homomorphisms depend not only on $E$ and $\epsilon$ but also on the distribution of numbers $\delta_g$, $g\in \Gamma$. 
Another method of approximating groups which is used often we will adress here as the \deph{discrete} $\mathscr{G}$\deph{-approximation property}. A group has this property if we replace the constants $\delta_g$ in the above definition by a common constant $\delta$ which does only depend on the whole of $\Gamma$.
The \deph{strong discrete} $\mathscr{G}$\deph{-approximation property} demands $\ell(\phi(g))\geq \diam(G)-\epsilon$ for all $g\in E\setminus\{1\}$ instead of \ref{def:ApprPropSingleElement}. Such $\phi$ is called a \deph{strong} almost homomorphism. Of course this only makes sense if the groups $G\in\mathscr{G}$ have finite diameter.

We proceed by exhibiting the connection between group approximation and metric ultraproducts. The following fundamental theorem is a generalization of Theorem~1 in \cite{elekszabo05} and the proof is the same. Confer also \cite{thom12}, Proposition~1.8.

\begin{theorem}\label{thm:ApproximationPropertyAndMetricUltraproducts}
Let $\Gamma$ be a group. Then $\Gamma$ has the $\mathscr{G}$-approximation property if and only if there is a suitable index set $I$ and an ultrafilter $\mathfrak{u}$ in $I$ such that $\Gamma$ can be embedded into a metric ultraproduct $\mup{\bs{G}}{\mathfrak{u}}\coloneqq \mup{\prod_{i\in I} G_i}{\mathfrak{u}}$ with groups $G_i\in\mathscr{G}$. The set $I$ can be chosen to have cardinality not exceeding the cardinality of $\Gamma$.

Moreover $\Gamma$ has the discrete $\mathscr{G}$-approximation property if and only if it embeds into $\mup{\bs{G}}{\mathfrak{u}}$ as a discrete subgroup.
\end{theorem}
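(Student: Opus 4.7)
The plan is to prove both directions by the standard ultraproduct construction going back to Elek--Szab\'o \cite{elekszabo05}. For ``$\Rightarrow$'' I will organize the given family of almost homomorphisms into a single coherent sequence indexed by pairs $(E,\epsilon)$, pass to a metric ultralimit along a suitable ultrafilter, and verify that the resulting map into $\mup{\bs{G}}{\mathfrak{u}}$ is an injective group homomorphism. For ``$\Leftarrow$'' I will lift each element of the embedded copy of $\Gamma$ to a representative sequence, read off the constants $\delta_g$ from the ultralimits of lengths, and show that $\mathfrak{u}$-almost any coordinate supplies the required $(E,\epsilon)$-homomorphism.

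For the forward direction take $I := \mathcal{F}(\Gamma) \times \mathbb{Q}_{>0}$, where $\mathcal{F}(\Gamma)$ denotes the set of finite subsets of $\Gamma$; for infinite $\Gamma$ this has cardinality $|\Gamma|$, matching the bound claimed. The sets $U_{(E_0,\epsilon_0)} := \set{(E,\epsilon) \in I}{E_0 \subseteq E,\ \epsilon \leq \epsilon_0}$ form a filter base, so choose an ultrafilter $\mathfrak{u}$ on $I$ containing all of them. For each $i = (E,\epsilon) \in I$ the approximation property supplies $G_i \in \mathscr{G}$ and an $(E,\epsilon)$-homomorphism $\phi_i\colon \Gamma \to G_i$; put $\Phi(g) := [(\phi_i(g))_i]$. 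That $\Phi$ is a homomorphism follows from \ref{def:ApprPropOne} and \ref{def:ApprPropMultiplication}: for fixed $g,h \in \Gamma$ and $\delta > 0$, on the set $U_{(\{1,g,h,gh\},\delta)} \in \mathfrak{u}$ the length of $\phi_i(g)\phi_i(h)\phi_i(gh)^{-1}$ is at most $\epsilon \leq \delta$, so the ultralimit vanishes. Injectivity is \ref{def:ApprPropSingleElement}: for $g\ne 1$ we have $\ell(\phi_i(g)) \geq \delta_g > 0$ on $U_{(\{g\},1)} \in \mathfrak{u}$, forcing $\Phi(g) \neq 1$.

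For the backward direction fix an embedding $\Psi \colon \Gamma \hookrightarrow \mup{\bs{G}}{\mathfrak{u}}$, pick for each $g \in \Gamma$ a representative $(g_i)_i \in \prod_i G_i$ of $\Psi(g)$ (taking $(1_{G_i})_i$ for $g=1$), and for $g \ne 1$ set $\delta_g := \tfrac12 \lim_{\mathfrak{u}} \ell_i(g_i)$, which is strictly positive because $(g_i) \notin N$. Given a finite $E \subset \Gamma$ and $\epsilon > 0$, the set of indices $i$ simultaneously satisfying $\ell_i(g_i) > \delta_g$ for every $g \in E\setminus\{1\}$ and $\ell_i(g_i h_i (gh)_i^{-1}) < \epsilon$ for every $(g,h) \in E \times E$ is a finite intersection of members of $\mathfrak{u}$, so is non-empty; any such $i$ gives the required $(E,\epsilon)$-homomorphism $\phi(g) := g_i$.

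The discrete version is handled by running the same argument with a single constant $\delta > 0$ replacing the family $\{\delta_g\}$: forward, a uniform lower bound on $\ell(\phi_i(g))$ on $E\setminus\{1\}$ propagates to a uniform lower bound on the induced length function on $\Phi(\Gamma) \setminus \{1\}$, i.e.\ discreteness; backward, discreteness of $\Psi(\Gamma)$ permits the uniform choice $\delta_g \equiv \delta/2$. There is no genuine obstacle here; the only care point is making sure the filter base on $I$ is rich enough that every finite combination of ``goodness'' conditions one might demand lies in $\mathfrak{u}$, which is exactly what the directed structure of $\mathcal{F}(\Gamma) \times \mathbb{Q}_{>0}$ supplies.
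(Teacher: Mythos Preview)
Your argument is correct and is precisely the standard Elek--Szab\'o construction the paper defers to (the paper does not supply its own proof, only citing \cite{elekszabo05} and \cite{thom12}). The only cosmetic point is that the cardinality bound $|I|\le|\Gamma|$ as stated needs $\Gamma$ infinite; for finite $\Gamma$ one handles things separately or ignores the bound, which is harmless.
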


Note that the very definition of $\mathscr{G}$-approximation immediately implies that a group $\Gamma$ has the $\mathscr{G}$-approximation property if and only if every finitely generated subgroup does. Hence it often suffices to study countable groups with the $\mathscr{G}$-approximation property. The same is true for the discrete and strong discrete approximation property.

To make further investigation a bit more pleasant, we relax the conditions characterizing almost homomorphisms. Condition~\ref{def:ApprPropOne} is not necessary in the definition of almost homomorphisms, provided $1\in E$:

\begin{proposition}\label{prop:RelaxedAlmostHomomorphisms}
If $\Gamma$ has the $\mathscr{G}$-approximation property, then for all finite $E\subset \Gamma$ and $\epsilon>0$ there is an $(E,\epsilon)$-homomorphism $\phi$ such that $\phi(1)=1$. Furthermore for $g\in E$ the length $\ell(\phi(g))$ is as large as we can expect of any almost homomorphism $\Gamma\rightarrow G\in\mathscr{G}$.
\end{proposition}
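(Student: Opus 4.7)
The plan is to start with a twice-as-sharp almost homomorphism and correct the defect $\phi(1)\neq 1$ by right-translating. Concretely, given $E\subset\Gamma$ finite and $\epsilon>0$, I would invoke the $\mathscr{G}$-approximation property with the same $E$ but with parameter $\epsilon/2$, obtaining some $\phi\colon\Gamma\to G\in\mathscr{G}$ satisfying \ref{def:ApprPropOne}--\ref{def:ApprPropMultiplication} with $\epsilon/2$ in place of $\epsilon$, and the prescribed lower bounds $\delta_g$ on $E\setminus\{1\}$. I would then set $\phi'(g)\coloneqq\phi(g)\phi(1)^{-1}$, which makes $\phi'(1)=1$ automatic.

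The verification of \ref{def:ApprPropMultiplication} for $\phi'$ on $E$ rests on the algebraic rewrite
\[
\phi'(g)\phi'(h)\phi'(gh)^{-1}=\phi(g)\phi(1)^{-1}\phi(h)\phi(gh)^{-1}=\bigl(\phi(g)\phi(1)^{-1}\phi(g)^{-1}\bigr)\bigl(\phi(g)\phi(h)\phi(gh)^{-1}\bigr),
\]
after which the triangle inequality \ref{def:LengthFunctionTriangle}, invariance of $\ell$ under conjugation, and \ref{def:LengthFunctionSymmetry} reduce the first factor to $\ell(\phi(1))\le\epsilon/2$ and bound the second by $\epsilon/2$, giving $\epsilon$ in total. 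The commutator-style rewrite is the only step with any real content; without it one cannot cleanly extract $\phi(1)$ as an error term of controlled length.

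For the pointwise lower bounds, \ref{def:LengthFunctionTriangle} applied to $\phi(g)=\phi'(g)\phi(1)$ gives $\ell(\phi(g))\le\ell(\phi'(g))+\ell(\phi(1))$, hence $\ell(\phi'(g))\ge\delta_g-\epsilon/2$. Since $\epsilon$ was arbitrary at the outset, $\ell(\phi'(g))$ can be driven within any prescribed margin of the intrinsic constant $\delta_g$ furnished by the $\mathscr{G}$-approximation property, which is what the final sentence of the proposition asserts. I expect no serious obstacle: the argument is purely formal bookkeeping, relying only on \ref{def:LengthFunctionTriangle}, \ref{def:LengthFunctionSymmetry}, and the (standard in this context) invariance of $\ell$, with the conjugation trick of the second paragraph being the one non-mechanical move.
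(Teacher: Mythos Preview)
Your argument is correct but uses a different normalisation from the paper's. The paper takes an $(E,\tfrac{1}{2}\epsilon)$-homomorphism $\psi$ and redefines it \emph{only at the identity}, setting $\phi(1)\coloneqq 1$ and $\phi(g)\coloneqq\psi(g)$ for $g\neq 1$; condition~\ref{def:ApprPropMultiplication} is then verified by a short case analysis on whether one of $g$, $h$, $gh$ equals $1$. The payoff of the paper's route is that $\ell(\phi(g))=\ell(\psi(g))$ exactly for every $g\in E\setminus\{1\}$, so the constants $\delta_g$ survive untouched rather than degraded by $\epsilon/2$, and the computation never invokes conjugation-invariance of $\ell$. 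Your global right-translation trades these features for a single uniform calculation covering all $g,h\in E$ at once, with the conjugation rewrite $\phi(g)\phi(1)^{-1}\phi(g)^{-1}$ doing the work; the $\epsilon/2$ loss on the lower bounds is, as you observe, harmless since $\epsilon$ is free, and invariance is in any case assumed throughout the paper.
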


\begin{proof}
 Let $E\subset \Gamma$ be finite and $\epsilon>0$, and assume $1\in E$. Let $\psi\colon \Gamma\rightarrow G$ be a mapping satisfying \ref{def:ApprPropSingleElement} and \ref{def:ApprPropMultiplication}. Then
\[\ell(\psi(1))=\ell(\psi(1)\cdot \psi(1)\psi(1\cdot 1)^{-1})\leq \epsilon\]
follows.

Now assume $\psi$ is an $(E,\frac{1}{2}\epsilon)$-homomorphism. We define $\phi$ to take the same values as $\psi$ does, except $\phi(1)\coloneqq 1$. It suffices to show $\ell(\phi(g)\phi(h)\phi(gh)^{-1})\leq \epsilon$ for $g,h\in E\cup\{1\}$ and $gh\in E^2\cup\{1\}$. The cases to check are (by symmetry) without loss of generality $g=h^{-1}$, $g\neq 1$, and $g\neq 1$, $h= 1$. In the first case
\begin{align*}
\ell(\phi(g)\phi(g^{-1}))&=\ell(\psi(g)\psi(g^{-1})\psi(1)^{-1}\psi(1))\\
&\leq \ell(\psi(g)\psi(g^{-1})\psi(gg^{-1})^{-1})+\ell(\psi(1))\\
&\leq \tfrac{1}{2}\epsilon+\tfrac{1}{2}\epsilon=\epsilon.
\end{align*}
In the second case we are left to check $\ell(\phi(g)\phi(g)^{-1})\leq \epsilon$. This is true, because
\begin{align*}
 \ell(\phi(g)\phi(g)^{-1})&=\ell(\psi(g)\psi(g^{-1})\psi(1)^{-1}\psi(1)\psi(g^{-1})^{-1}\psi(g)^{-1})\\
&\leq \ell(\psi(g)\psi(g^{-1})\psi(1)^{-1})+\ell(\psi(1)\psi(g^{-1})^{-1}\psi(g)^{-1})\\
&\leq \tfrac{1}{2}\epsilon+\tfrac{1}{2}\epsilon=\epsilon,
\end{align*}
and the proof is complete.
\end{proof}

Let $\GLinG$ denote the class of all general linear groups over arbitrary fields and $\GLinG(K)$ the class of general linear groups over a fixed field $K$. We shall call groups with the $\GLinG$-approximation property using the rank length \deph{linearly sofic} groups. If a group has the $\GLinG(K)$-approximation property for a fixed field $K$ we will call it $K$\deph{-sofic}. When instead approximation is done using the Jordan length we will speak of \deph{projectively linearly sofic} and \deph{projectively} $K$\deph{-sofic} groups, respectively. 

Note that the \emph{linear sofic groups} introduced in \cite{arzhantsevapaunescu13} are $\mathbb{C}$-sofic groups in the above sense.

\section{Amplification properties}\label{sec:Amplification}
In the definition of group approximation whether a group $\Gamma$ has the \emph{(strong) discrete} approximation property a priori depends on $\Gamma$. In certain classes $\mathscr{G}$ we can enforce the (strong) discrete $\mathscr{G}$-approximation property for every group having the $\mathscr{G}$-approximation property: A class of groups $\mathscr{G}$ has the \deph{amplification property} if there exists $\delta>0$ such that for any group $\Gamma$ and for all $\epsilon>0$ there exists $\epsilon'>0$ such that the following holds: Whenever $\phi\colon \Gamma\rightarrow G$ is an $(E,\epsilon')$-homomorphism into $G\in\mathscr{G}$, then there is $H\in\mathscr{G}$ and a mapping $\iota\colon G\rightarrow H$ such that $\iota\circ \phi\colon \Gamma\rightarrow H$ is an $(E, \epsilon)$-homomorphism with the additional property that $\ell(\iota\circ \phi(g))\geq \delta$ for all $g\in E\setminus\{1\}$. If there exist $H$ and $\iota$ such that $\iota \circ \phi$ satisfies $\ell(\iota\circ \phi(g))\geq \diam(H)-\epsilon$ we say that $\mathscr{G}$ has the \deph{strong amplification property}. (Note that $\delta$ is no longer needed in the second definition.)

Now the next proposition follows directly.

\begin{proposition}\label{prop:ConsequenceOfAmplificationProperty}
If $\mathscr{G}$ has the (strong) amplification property, then any group $\Gamma$ has the (strong) discrete $\mathscr{G}$-approximation property if and only if it has the $\mathscr{G}$-approximation property.
\end{proposition}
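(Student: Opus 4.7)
The plan is to prove both implications of the biconditional separately, observing that the forward direction is essentially trivial while the converse is exactly what the amplification property is built to supply.

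First, suppose $\Gamma$ has the discrete $\mathscr{G}$-approximation property with uniform constant $\delta$. Then for any $g \in \Gamma \setminus \{1\}$ one may simply take $\delta_g \coloneqq \delta$; the $(E,\epsilon)$-homomorphisms witnessing the discrete property then witness the $\mathscr{G}$-approximation property verbatim. The strong discrete case works the same way since $\diam(G) - \epsilon$ does not depend on $g$.

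For the converse, assume $\Gamma$ has the $\mathscr{G}$-approximation property and let $\delta > 0$ be the constant furnished by the amplification property of $\mathscr{G}$. Given a finite $E \subset \Gamma$ and $\epsilon > 0$, invoke amplification to produce $\epsilon' > 0$. By the $\mathscr{G}$-approximation property applied to $E$ and $\epsilon'$ there exists $G \in \mathscr{G}$ and an $(E,\epsilon')$-homomorphism $\phi \colon \Gamma \rightarrow G$. (Note that the existence of such a $\phi$ is all that the amplification hypothesis requires; the particular values $\delta_g$ in the $\mathscr{G}$-approximation property are irrelevant here.) The amplification property then supplies $H \in \mathscr{G}$ and a map $\iota \colon G \rightarrow H$ so that $\iota \circ \phi \colon \Gamma \rightarrow H$ is an $(E,\epsilon)$-homomorphism with $\ell(\iota \circ \phi(g)) \geq \delta$ for every $g \in E \setminus \{1\}$. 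Since $\delta$ depends neither on $E$ nor on $\epsilon$, this exhibits $\Gamma$ as having the discrete $\mathscr{G}$-approximation property. The strong version is proved in exactly the same way, replacing the use of the amplification property by the strong amplification property and reading $\diam(H) - \epsilon$ instead of $\delta$.

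I expect no genuine obstacle: the argument is a direct definition-chase, as the parenthetical remark preceding the statement in the paper already suggests. The only minor point requiring care is to keep the order of quantifiers straight — first fixing $E,\epsilon$, then extracting $\epsilon'$ from amplification, and only then invoking the $\mathscr{G}$-approximation property at level $\epsilon'$ — and to observe that the constant $\delta$ from amplification is universal in $E, \epsilon$, and $\Gamma$, so that it legitimately plays the role of the uniform lower bound required in the discrete definition.
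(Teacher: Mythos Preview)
Your proof is correct and is exactly the routine definition-chase the paper has in mind; indeed the paper gives no proof at all, merely remarking that the proposition ``follows directly'' from the definition of the amplification property. Your write-up simply makes explicit what the paper leaves implicit.
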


The maybe best known example of amplification are the symmetric groups. The proof appears e.\,g. in \cite{elekszabo05}, proof of Theorem~1 or in \cite{pestov08}, proof of Theorem~3.5.

\begin{proposition}
The class $\mathcal{S}$ of symmetric groups with the Hamming length has the strong amplification property.
\end{proposition}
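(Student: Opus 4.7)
I would use the product-action homomorphism $\iota \colon S_n \to S_{n^k}$ defined by $\iota(\sigma)(x_1,\ldots,x_k) = (\sigma x_1,\ldots,\sigma x_k)$ for $(x_1,\ldots,x_k) \in [n]^k$. Since the fixed points of $\iota(\sigma)$ on $[n]^k$ are exactly the $k$-tuples of fixed points of $\sigma$, one obtains the key identity
\[\lh(\iota(\sigma)) \;=\; 1 - (1-\lh(\sigma))^k.\]
This formula exhibits the two competing effects that make amplification work: because $\iota$ is a group homomorphism, multiplicative defects transform as $\iota(\phi(g)\phi(h)\phi(gh)^{-1})$, and Bernoulli's inequality amplifies their Hamming length only linearly in $k$; meanwhile, any strictly positive input Hamming length is pushed exponentially close to $1$, and hence to $\diam(S_{n^k}) = 1 - 1/n^k$, as $k$ grows.

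Given a finite $E \subset \Gamma$ and $\epsilon > 0$, the plan is to set $\delta := \min_{g \in E \setminus \{1\}} \delta_g > 0$, to choose $k$ large enough that $(1-\delta)^k \leq \epsilon$, and to put $\epsilon' := \epsilon/k$. For any $(E,\epsilon')$-homomorphism $\phi \colon \Gamma \to S_n$ I would first apply Proposition~\ref{prop:RelaxedAlmostHomomorphisms} to assume $\phi(1) = 1$, so that $\iota\phi(1) = 1$ and \ref{def:ApprPropOne} is trivial. The multiplication condition \ref{def:ApprPropMultiplication} then follows from
\[\lh\bigl(\iota(\phi(g)\phi(h)\phi(gh)^{-1})\bigr) \leq 1 - (1-\epsilon')^k \leq k\epsilon' = \epsilon,\]
and the strong form of \ref{def:ApprPropSingleElement} from
\[\lh(\iota\phi(g)) \geq 1 - (1-\delta_g)^k \geq 1 - \epsilon \geq \diam(S_{n^k}) - \epsilon.\]

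The principal obstacle is one of quantifier order: the exponent $k$ is dictated by $\delta$ and $\epsilon$, while the definition of amplification demands that $\epsilon'$ be fixed before $\phi$—and thus before $n$—is revealed. This is legitimate precisely because the $\delta_g$ are part of the data of $\Gamma$'s $\mathscr{G}$-approximation, preceding $\epsilon$, $E$, and $\phi$ in the logical order; hence $k$ and $\epsilon'$ may be calibrated uniformly over all candidate almost homomorphisms $\phi$. The remaining ingredients—Bernoulli's inequality and the fixed-point count of $\sigma^{\otimes k}$—are routine.
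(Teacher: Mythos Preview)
Your argument is correct and is exactly the standard tensor-power/product-action proof that the paper defers to (it gives no proof of its own, citing Elek--Szab\'o and Pestov for precisely this construction). Two small remarks: first, $\diam(S_{n^k})=1$ for $n\geq 2$, not $1-1/n^k$ (derangements exist), but this only strengthens your final inequality $1-\epsilon\geq\diam(S_{n^k})-\epsilon$; second, in the amplification setup $\phi$ is \emph{given} rather than chosen, so invoking Proposition~\ref{prop:RelaxedAlmostHomomorphisms} to normalise $\phi(1)=1$ is not quite licit---but it is also unnecessary, since $\lh(\iota\phi(1))=1-(1-\lh(\phi(1)))^k\leq k\epsilon'=\epsilon$ follows directly from \ref{def:ApprPropOne} for $\phi$. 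Your handling of the quantifier order (letting $\epsilon'$ depend on $\min_{g\in E}\delta_g$) matches how the paper itself argues in the proof of Theorem~\ref{thm:AmplificationInMatrixGroups}.
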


Consider the following statements about approximation of subgroups, inverse limits and direct products of groups.

\begin{proposition}\label{prop:SubgroupsAndInverseLimitsPermanence}
Let $\mathscr{G}$ be a class of groups with invariant pseudo length functions. Then the class of groups with the $\mathscr{G}$-approximation property is closed under taking subgroups and inverse limits. The same is true for the discrete and strong discrete $\mathscr{G}$-approximation property.
\end{proposition}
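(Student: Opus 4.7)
The plan is to verify each of the four closure properties directly from the definition, constructing the required almost homomorphisms from those already available for the given groups. For subgroups, given $H \leq \Gamma$ with $\Gamma$ satisfying the $\mathscr{G}$-approximation property, I inherit $\delta_g^H := \delta_g^\Gamma$ for each $g \in H \setminus \{1\}$ (and $\delta^H := \delta^\Gamma$ in the discrete variants). Given finite $E \subset H$ and $\epsilon > 0$, I view $E$ as a finite subset of $\Gamma$, invoke $\Gamma$'s approximation property to obtain $\phi \colon \Gamma \rightarrow G$ with $G \in \mathscr{G}$, and restrict to $\phi|_H \colon H \rightarrow G$. Conditions \ref{def:ApprPropOne}--\ref{def:ApprPropMultiplication} only involve values of $\phi$ at elements of $E \cup \{1\}$ and products thereof, all of which lie in $H$, so the restriction remains an $(E, \epsilon)$-homomorphism with the inherited lower bounds. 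The strong discrete version works identically because $\ell(\phi(g)) \geq \diam(G) - \epsilon$ is internal to $G$.

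For inverse limits, let $\Gamma = \varprojlim \Gamma_i$ over a directed system with canonical projections $\pi_i \colon \Gamma \rightarrow \Gamma_i$. For each $g \in \Gamma \setminus \{1\}$ there exists some $i_g$ with $\pi_{i_g}(g) \neq 1$, and compatibility of the system then forces $\pi_j(g) \neq 1$ for every $j \geq i_g$ in the directed set. Given finite $E \subset \Gamma$ and $\epsilon > 0$, directedness yields an index $j$ dominating every $i_g$ with $g \in E \setminus \{1\}$, so $\pi_j$ avoids $1$ on $E \setminus \{1\}$. I apply $\Gamma_j$'s approximation property to $\pi_j(E) \subset \Gamma_j$ with tolerance $\epsilon$, obtain $\phi' \colon \Gamma_j \rightarrow G$, and set $\phi := \phi' \circ \pi_j$. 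Conditions \ref{def:ApprPropOne} and \ref{def:ApprPropMultiplication} transfer at once because $\pi_j$ is a group homomorphism, so in particular $\phi(gh) = \phi'(\pi_j(g)\pi_j(h))$.

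The main obstacle is matching the lower bound in \ref{def:ApprPropSingleElement}: the construction supplies $\ell(\phi(g)) \geq \delta_{\pi_j(g)}^{\Gamma_j}$, but I want a fixed $\delta_g > 0$ independent of $E$ (and hence of the choice of $j$). I would resolve this either by defining $\delta_g$ consistently across the inverse system, for instance as an infimum of $\delta_{\pi_j(g)}^{\Gamma_j}$ over a cofinal family of indices $j \geq i_g$, or more cleanly by routing the argument through Theorem~\ref{thm:ApproximationPropertyAndMetricUltraproducts}: each $\Gamma_i$ embeds into a metric ultraproduct $\mathcal{U}_i$ of $\mathscr{G}$-groups, $\Gamma$ embeds diagonally into $\prod_i \Gamma_i \hookrightarrow \prod_i \mathcal{U}_i$, and one realises the latter inside a single metric ultraproduct by combining index sets and ultrafilters. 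In the discrete variant the analogous matching issue arises for the single constant $\delta$ and is handled by the same two mechanisms, whereas the strong discrete variant avoids it entirely since the condition $\ell(\phi(g)) \geq \diam(G) - \epsilon$ depends on no predefined constant.
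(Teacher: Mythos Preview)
Your argument mirrors the paper's: subgroups are handled by restriction (the paper simply calls this ``obvious''), and inverse limits by projecting a given finite $E$ down to a single $\Gamma_j$ where the relevant elements remain distinct, then composing with an almost homomorphism for $\Gamma_j$. The paper's proof is equally terse and does not explicitly address the $\delta_g$ consistency issue you correctly flag.

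However, neither of your proposed resolutions actually closes that gap. Taking $\delta_g \coloneqq \inf_{j \geq i_g} \delta^{\Gamma_j}_{\pi_j(g)}$ can yield zero: nothing in the hypotheses bounds the constants $\delta^{\Gamma_j}_{\pi_j(g)}$ away from zero as $j$ varies. The ultraproduct route faces the same obstruction in disguise. Embedding $\Gamma \hookrightarrow \prod_i \Gamma_i \hookrightarrow \prod_i \mathcal{U}_i$ is fine, but to land in a single metric ultraproduct you must pass to an ultrafilter $\mathfrak{v}$ on $I$ (or a product ultrafilter on the combined index set); for the image of $g \neq 1$ to survive you need $\lim_{\mathfrak{v}} \ell_{\mathcal{U}_i}(\pi_i(g)) > 0$, and again nothing prevents these lengths from tending to zero along $\mathfrak{v}$. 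The discrete case has the analogous problem with $\inf_i \delta^{\Gamma_i}$. Only the strong discrete case is genuinely immune, as you note.

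In short, you have matched the paper's argument and spotted a real subtlety it elides, but your patches do not repair it in full generality. What rescues matters in all of the paper's applications is the amplification property (Theorem~\ref{thm:AmplificationInMatrixGroups}): there $\delta$ is a constant attached to the class $\mathscr{G}$ itself, uniform across all approximated groups, so the same $\delta$ serves for every $\Gamma_i$ and hence for $\Gamma$.
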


\begin{proof}
The statement concerning subgroups is obvious.

Assume that $\Gamma$ is the inverse limit of groups $\Gamma_i$ with projection morphisms $\pi_i^j\colon \Gamma_i\rightarrow \Gamma_j$ and $\pi_i\colon \Gamma\rightarrow \Gamma_i$, where $i$, $j$ are from a directed set $I$. Then $\Gamma$  can be identified with the set of vectors $(g_i)_{i\in I}$ in $\prod_{i\in I}\Gamma_i$ such that $\pi_i^j(g_i)=g_j$ if $j\leq i$. If $E$ is a finite subset of $\Gamma$ there is an index $i$ such that $\pi_i(g)\neq \pi_i(h)$ for all $g, h\in E^2$. By assumption for any $\epsilon>0$ there is a $(\pi_i(E),\epsilon)$-homomorphism $\phi\colon \Gamma_i\rightarrow G$ for some $G\in\mathscr{G}$. 
By the choice of $i$, $\phi\circ \pi_i$ is well defined on $E^2$. Then some arbitrary extension of $\phi\circ\pi_i$ to the rest of $\Gamma$ is automatically an $(E,\epsilon)$-homomorphism. The same argument works for the stronger $\mathscr{G}$-approximation properties in the claim.
\end{proof}

\begin{proposition}\label{prop:DirectProductsPermanence}
Suppose that $\mathscr{G}$ has the following property: For every finite direct product $G_1\times \ldots \times G_k$ of groups $G_i\in\mathscr{G}$ there are weights $w_1,\ldots,w_k\in [0,\infty[$, a group $H\in \mathscr{G}$ and an isometric embedding $G_1\times \ldots \times G_k\rightarrow H$, where we use the pseudo length function
\[\ell((g_1,\ldots,g_k))\coloneqq \frac{\sum_{i=1}^k w_i\ell(g_i)}{\sum_{i=1}^kw_i}\]
on $G_1\times\ldots\times G_k$. Then the class of groups with the $\mathscr{G}$-approximation property is closed under taking direct products. The same holds for the discrete and strong discrete $\mathscr{G}$-approximation property.
\end{proposition}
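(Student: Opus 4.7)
The plan is to assemble an almost homomorphism of $\Gamma\coloneqq \Gamma_1\times\ldots\times\Gamma_k$ factorwise through the isometric embedding supplied by the hypothesis. Given a finite $E\subset \Gamma$ and $\epsilon>0$, set $E_i\coloneqq \pi_i(E)$ and choose for each $i$ an $(E_i,\epsilon)$-homomorphism $\phi_i\colon \Gamma_i\to G_i\in\mathscr{G}$, arranging $\phi_i(1)=1$ via Proposition~\ref{prop:RelaxedAlmostHomomorphisms}. Applying the hypothesis to $G_1\times\ldots\times G_k$ produces weights $w_i$ and an isometric embedding $\iota\colon G_1\times\ldots\times G_k\hookrightarrow H\in\mathscr{G}$; I then set $\phi\coloneqq \iota\circ(\phi_1,\ldots,\phi_k)\colon \Gamma\to H$.

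Verifying the three axioms reduces, through the isometry $\iota$, to coordinatewise estimates averaged with the weights $w_i$. Axiom \ref{def:ApprPropOne} is immediate from $\phi(1)=\iota(1,\ldots,1)=1$. Axiom \ref{def:ApprPropMultiplication} follows from
\[\ell(\phi(g)\phi(h)\phi(gh)^{-1})=\frac{\sum_i w_i\,\ell(\phi_i(g_i)\phi_i(h_i)\phi_i(g_ih_i)^{-1})}{\sum_i w_i}\leq \epsilon,\]
since each summand in the numerator is bounded by $\epsilon$. For \ref{def:ApprPropSingleElement}, given $g=(g_1,\ldots,g_k)\in E\setminus\{1\}$ and any coordinate $j$ with $g_j\neq 1$,
\[\ell(\phi(g))\geq \frac{w_j\,\ell(\phi_j(g_j))}{\sum_i w_i}\geq \frac{w_j}{\sum_i w_i}\,\delta^{(j)}_{g_j},\]
where $\delta^{(j)}_{g_j}>0$ is the constant attached to $g_j$ by the $\mathscr{G}$-approximation property of $\Gamma_j$.

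The main obstacle is making this lower bound uniform in $\epsilon$: the weights $w_i$ depend on the specific $G_i$ chosen, and these change with $\epsilon$, so the ratio $w_j/\sum_i w_i$ need not be bounded below a priori. I would address this by preprocessing each $\phi_i$, replacing $G_i$ by a suitable diagonal power $G_i^{N_i}$ re-embedded into $\mathscr{G}$ via a second application of the hypothesis, with the $N_i$ chosen so that the ensuing weights are comparable up to a factor depending only on $k$. With this control the discrete version follows at once, letting the $\delta^{(i)}_{g_i}$ collapse to a single constant. The strong discrete version requires further care, since for an element with only one non-trivial coordinate the weighted average of the $\diam(G_i)$ does not in general reach $\diam(H)-\epsilon$; a natural resolution is to appeal to the strong amplification property of $\mathscr{G}$ through Proposition~\ref{prop:ConsequenceOfAmplificationProperty}, if available, or else to arrange the embedding so that $\diam(H)$ is carried essentially by a single factor.
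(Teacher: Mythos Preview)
Your construction---project $E$ to each factor, choose $(E_i,\epsilon)$-homomorphisms $\phi_i$, and push the product map through the isometric embedding---is precisely what the paper has in mind; the paper's own proof is a two-line sketch that first reduces arbitrary direct products to finite ones by locality and then declares the rest ``immediate'' from the hypothesis, without writing down any of the estimates you supply.

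The obstacle you flag is genuine and is not addressed by the paper's sketch either. The constant $\delta_g$ must be fixed before $\epsilon$ and $E$, whereas your lower bound $\tfrac{w_j}{\sum_i w_i}\,\delta^{(j)}_{g_j}$ moves with the particular $G_i$ (and hence with $\epsilon$ and $E$), since the hypothesis only asserts that \emph{some} weights exist for each tuple $(G_1,\ldots,G_k)$. Your proposed repair via diagonal powers $G_i^{N_i}$ does not follow from the stated hypothesis: a second application to $G_i\times\cdots\times G_i$ again produces uncontrolled weights, so no rescaling is guaranteed. In the concrete instance the paper actually uses (Proposition~\ref{prop:SubgroupsDirectProductsAndLimits}, with $\GL_n(K)$ and weights $w_i=n_i$), one can balance the weights by padding with identity blocks, but that is additional structure not captured by the abstract hypothesis here. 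Your reservation about the strong discrete case is equally well taken; the paper offers no separate argument for it.
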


\begin{proof}
Because being approximated with groups in $\mathscr{G}$ is a local property, it clearly suffices to consider finitary direct products. Now the additional assumption on $\mathscr{G}$ implies the claim immediately. The argument also works for the discrete ond strong discrete approximation property.
\end{proof}

Certainly statements as above for other group theoretical constructions would be a great thing to have. Unfortunately under very general assumptions very little can be done. An example where the amplification property is needed is the following.

\begin{proposition}\label{prop:DirectLimitsPermanence}
If $\mathscr{G}$ has the amplification property, then the class of groups with the $\mathscr{G}$-approximation property is closed under taking direct limits.
\end{proposition}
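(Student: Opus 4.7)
The plan is to reduce an approximation problem on $\Gamma = \varinjlim \Gamma_i$ (with canonical maps $\kappa_i \colon \Gamma_i \to \Gamma$ and transition maps $\kappa_i^j \colon \Gamma_i \to \Gamma_j$ for $i \leq j$) to one on a single $\Gamma_i$, and then to use the amplification property to supply a uniform length bound that is needed to produce the constants $\delta_g$ for the direct limit.

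Given a finite subset $E \subset \Gamma$ with $1 \in E$ and $\epsilon > 0$, the first step is to construct an index $i \in I$ and a section $\sigma \colon E \to \Gamma_i$ with $\kappa_i \circ \sigma = \id_E$ that is partially multiplicative: $\sigma(1) = 1$, $\sigma(g^{-1}) = \sigma(g)^{-1}$, and $\sigma(g)\sigma(h) = \sigma(gh)$ whenever $g, h, gh \in E$. To produce $\sigma$, start with arbitrary lifts of each element of $E$ at some common stage $\Gamma_{i_0}$, which is possible by the directedness of $I$. For each triple $g, h, gh \in E$ the two elements $\sigma(g)\sigma(h)$ and $\sigma(gh)$ project to the same element of $\Gamma$, so by the direct-limit relations they become equal after applying $\kappa_{i_0}^j$ for some $j \geq i_0$. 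Handling the finitely many triples together with the identity and inverse relations and passing to a single $\Gamma_i$ large enough yields $\sigma$; it is automatically injective, since $\kappa_i \circ \sigma = \id_E$.

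Next, let $\delta > 0$ be the constant from the amplification property of $\mathscr{G}$ and let $\epsilon' > 0$ be supplied by amplification for the given $\epsilon$. Applying the $\mathscr{G}$-approximation property of $\Gamma_i$ to the finite set $\tilde E \coloneqq \sigma(E)$ with tolerance $\epsilon'$ yields an $(\tilde E, \epsilon')$-homomorphism $\psi \colon \Gamma_i \to G$ into some $G \in \mathscr{G}$. Amplify it to $\iota \circ \psi \colon \Gamma_i \to H$, an $(\tilde E, \epsilon)$-homomorphism with $\ell(\iota \circ \psi(\tilde g)) \geq \delta$ for $\tilde g \in \tilde E \setminus \{1\}$.

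Finally, define $\phi \colon \Gamma \to H$ by $\phi(g) \coloneqq \iota \circ \psi(\sigma(g))$ for $g \in E$ and extend arbitrarily on $\Gamma \setminus E$. The lower bound $\ell(\phi(g)) \geq \delta$ on $E \setminus \{1\}$ is immediate from injectivity of $\sigma$ together with $\sigma(1) = 1$, and for $g, h \in E$ with $gh \in E$ the multiplicativity of $\sigma$ identifies the defect $\phi(g)\phi(h)\phi(gh)^{-1}$ with $(\iota \circ \psi)(\sigma(g)) \cdot (\iota \circ \psi)(\sigma(h)) \cdot (\iota \circ \psi)(\sigma(g)\sigma(h))^{-1}$, whose length is at most $\epsilon$ by the choice of $\iota \circ \psi$. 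Thus $\Gamma$ has the $\mathscr{G}$-approximation property, in fact with uniform constants $\delta_g \equiv \delta$. I expect the only real obstacle to be the bookkeeping required to arrange the partially multiplicative lift $\sigma$; amplification then takes care of the length lower bound, which would otherwise threaten to depend on the stage $i$ to which we lifted.
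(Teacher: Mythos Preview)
Your approach is essentially the paper's: lift the finite test set to some stage $\Gamma_i$, approximate there, and invoke amplification so that the lower length bounds do not depend on the stage. Your construction of the section $\sigma$ is in fact more careful than the paper's one-line ``eventually $E^2\subset\Gamma_i$''.

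There is, however, one genuine oversight. Condition~\ref{def:ApprPropMultiplication} demands $\ell(\phi(g)\phi(h)\phi(gh)^{-1})\leq\epsilon$ for \emph{all} $g,h\in E$, so $gh$ ranges over $E^2$, not only over $E$. Your section $\sigma$ is defined only on $E$, and you verify the defect bound only when $gh\in E$; for $g,h\in E$ with $gh\notin E$ you have set $\phi(gh)$ arbitrarily and therefore have no control whatsoever over the defect. The remedy is immediate: build $\sigma$ on $E\cup E^2$ (equivalently on $E^2$, since $1\in E$), impose $\sigma(g)\sigma(h)=\sigma(gh)$ for all $g,h\in E$ (still finitely many relations, so the same directed-limit bookkeeping works), take $\tilde E\coloneqq\sigma(E)$ for the approximation on $\Gamma_i$, and define $\phi$ via $\sigma$ on all of $E\cup E^2$. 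This is exactly why the paper's proof speaks of $E^2$. With that adjustment your argument is complete.
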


\begin{proof}
Let $\Gamma$ be a direct limit of groups $\Gamma_i$, which can be approximated in $\mathscr{G}$. Let $\epsilon>0$ and $E\subset \Gamma$ be a finite subset. Because $E$ is finite, eventually $E^2\subset \Gamma_i$ holds. Thus we find an $(E,\epsilon)$-homorphism $\phi\colon \Gamma_i\rightarrow G$, where $G\in \mathscr{G}$, which can be extended arbitrarily to an almost homomorphism defined on $\Gamma$. Then $\ell(\phi(g))\geq \delta_{g,i}>0$. Since a priori $\delta_{g,i}$ depends on $i$, we need the amplification property to ensure $\delta_{g,i}\geq \delta_g$ for constants $\delta_g$ not depending on $i$.
\end{proof}

In \cite{arzhantsevapaunescu13} Arzhantseva and P\u{a}unescu showed that $\set{\GL_n(\mathbb{C})}{n\in \mathbb{N}}$ with the rank length has the amplification property. In fact the proof does not use particular properties of the complex numbers apart from characteristic zero and it could be generalized to arbitrary fields of characteristic zero.
We will modify the method of proof from \cite{arzhantsevapaunescu13}, Section~5, to work in any characteristic and to show the amplification property also when working with the Jordan length.

Let $K$ be an algebraically closed field. Then the Jordan decomposition for matrices in $\GL_n(K)$ exists. In particular in algebraically closed fields every square matrix is conjugate to a matrix in Jordan normal form.

Over any field we use the notation $J(\alpha,s)$ for $s\times s$-Jordan matrices with eigenvalue $\alpha$. Every matrix $A\in \GL_n(K)$ is in $\GL_n(\overline{K})$ conjugate to a matrix $A'$ in Jordan normal form, where $\overline{K}$ is the algebraic closure of $K$. We write $\iota_\alpha(A)$ for the number of Jordan blocks $J(\alpha,1)$ of $A'$ divided by $n$. Furthermore let
\[\iota(A)\coloneqq \sup_{\alpha\in K^\times}\iota_\alpha(A).\]

\begin{proposition}\label{prop:LengthFunctionsAndIotaValuesEstimates}
 Let $K$ be a field and $A$ in $\GL_n(K)$. Then 
\[\tfrac{1}{2}(1-\iota_1(A))\leq \lr(A)\leq 1-\iota_1(A),\quad\quad  \tfrac{1}{2}(1-\iota(A))\leq \lj(A)\leq 1-\iota(A).\]
\end{proposition}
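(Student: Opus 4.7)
The plan is to compute both sides explicitly by passing to Jordan normal form. Conjugation preserves rank, and extending scalars from $K$ to the algebraic closure $\overline{K}$ also does not change the rank of a matrix (this follows from the fact that the rank is the size of the largest non-vanishing minor). So I may replace $A$ by its Jordan normal form $A'$ over $\overline{K}$ and compute $\rank(1-A') = \rank(1-A)$.

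For a single Jordan block $J(\alpha, s)$, a direct inspection gives
\begin{align*}
\rank(1 - J(\alpha, s)) =
\begin{cases} 0 & \text{if } \alpha = 1, s = 1, \\ s - 1 & \text{if } \alpha = 1, s \geq 2, \\ s & \text{if } \alpha \neq 1 \end{cases}
\end{align*}
since in the first case the block is $(0)$, in the second it is nilpotent of rank $s-1$, and in the third case $1 - \alpha \neq 0$ so $1 - J(\alpha, s)$ is invertible. Writing $k$ for the number of $J(1,1)$-blocks, $r$ for the number of $J(1,s)$-blocks with $s \geq 2$, and letting $m$ and $\ell$ denote the total sizes contributed by these latter blocks and by the blocks with $\alpha \neq 1$ respectively, I get $n = k + m + \ell$ and
\begin{align*}
\rank(1 - A) = (m - r) + \ell = (n - k) - r.
\end{align*}
The upper bound is immediate: $\rank(1 - A) \leq n - k$, giving $\lr(A) \leq 1 - \iota_1(A)$. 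For the lower bound, since each block counted by $r$ has size at least $2$, we have $r \leq m/2$, hence $\rank(1-A) \geq (n-k) - m/2 \geq (n-k)/2$, which gives $\lr(A) \geq \tfrac{1}{2}(1 - \iota_1(A))$.

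For the Jordan length estimate, I apply the already proved inequality to the matrix $\alpha^{-1} A$ for arbitrary $\alpha \in K^\times$. Since $\rank(1 - \alpha^{-1}A) = \rank(\alpha - A)$ (multiplication by the scalar $\alpha \neq 0$ is invertible) and since $\alpha^{-1}A$ has the same Jordan structure as $A$ with eigenvalues divided by $\alpha$, I get $\iota_1(\alpha^{-1}A) = \iota_\alpha(A)$, so
\begin{align*}
\tfrac{1}{2}(1 - \iota_\alpha(A)) \leq \tfrac{\rank(\alpha - A)}{n} \leq 1 - \iota_\alpha(A).
\end{align*}
Taking the infimum over $\alpha \in K^\times$ turns maximizing $\iota_\alpha(A)$ into the quantity $\iota(A)$, yielding $\tfrac{1}{2}(1 - \iota(A)) \leq \lj(A) \leq 1 - \iota(A)$.

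The only point needing care is the legitimacy of passing to $\overline{K}$: one must note that both the rank of $\alpha - A$ and the $\iota_\alpha$-counts are intrinsically defined by the Jordan structure over $\overline{K}$ but yield the same numerical values when $A$ is viewed over $K$, because rank is preserved by field extension. The rest is bookkeeping on Jordan blocks, with the crucial observation that non-trivial Jordan blocks (size $\geq 2$) contribute size $s$ to the dimension but only $s-1$ to the rank of $1 - A$, and that $s \geq 2$ forces the loss $r$ to be at most half of $m$.
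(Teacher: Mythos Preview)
Your proof is correct and follows essentially the same route as the paper's: pass to Jordan normal form over $\overline{K}$, bound $\rank(1-A)$ above and below by counting the contribution of each block, and then deduce the $\lj$ inequalities by replacing $A$ with $\alpha^{-1}A$ and taking an infimum over $\alpha\in K^\times$. Your version is simply more explicit in the block-by-block accounting (introducing $k$, $r$, $m$, $\ell$ and using $r\le m/2\le (n-k)/2$), whereas the paper compresses this into the single observation that each non-$J(1,1)$ block contributes at least half its size to $\rank(1-A)$.
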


\begin{proof}
Let $A'$ be a matrix in Jordan normal form conjugate to $A$ in $\GL_n(\overline{K})$. If this matrix has $k$ Jordan blocks $J(1,1)$, then 
\[\lr(A)=\frac{\rank(1-A)}{n}\leq \frac{n-k}{n}=1-\iota_1(A).\]
Since the remaining Jordan blocks do not have eigenvalues equal to $1$ or are of size strictly larger than $1$, also
\[\lr(A)=\frac{\rank(1-A)}{n}\geq \frac{n-k}{2n}=\tfrac{1}{2}(1-\iota_1(A)).\]
The claimed inequalities for $\iota(A)$ and $\lj(A)$ follow from the above result, since \[\lj(A)=\inf_{\alpha\in K^\times}\lr(\alpha A)\]
 and 
\[\inf_{\alpha\in K^\times}(1-\iota_1(\alpha A))=1-\sup_{\alpha\in K^\times}\iota_\alpha(A)=1-\iota(A).\]
\end{proof}

\begin{theorem}\label{thm:FewJordanBlocksInTensorProductMatrices}
 Let $K$ be an algebraically closed field and $J(\alpha,s)$, $J(\beta,t)$ two Jordan matrices with eigenvalues $\alpha,\beta\in K^\times$ and $s\leq t$. Then the Jordan normal form of $J(\alpha,s)\otimes J(\beta,t)$ has $s$ Jordan blocks.
\end{theorem}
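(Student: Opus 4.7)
The plan is to reduce the statement to a kernel-dimension computation and then carry it out in a polynomial quotient ring.

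First, since the eigenvalues of a Kronecker product are the products of eigenvalues of the factors, $M := J(\alpha,s) \otimes J(\beta,t)$ has $\alpha\beta$ as its only eigenvalue. Each Jordan block with eigenvalue $\alpha\beta$ contributes exactly one dimension to $\ker(M - \alpha\beta\cdot I_{st})$, so the total number of Jordan blocks equals
\[
\dim_K \ker(M - \alpha\beta\cdot I_{st}).
\]
Hence the whole task reduces to showing this kernel has dimension $s$.

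Next, reinterpret $M - \alpha\beta I$ module-theoretically. Identify $K^s$ with $K[x]/(x-\alpha)^s$ so that $J(\alpha,s)$ corresponds to multiplication by $x$, and similarly $K^t$ with $K[y]/(y-\beta)^t$ so that $J(\beta,t)$ corresponds to multiplication by $y$; both identifications are up to conjugation, which is all we need since only the Jordan structure matters. Then $K^s \otimes K^t$ becomes the ring $R := K[x,y]/\bigl((x-\alpha)^s,(y-\beta)^t\bigr)$, and $M$ acts on $R$ by multiplication by $xy$. For a multiplication-by-$r$ endomorphism of a finite-dimensional ring, kernel and cokernel have the same dimension, so
\[
\dim_K \ker(M - \alpha\beta I) = \dim_K R/(xy-\alpha\beta)R = \dim_K K[x,y]/\bigl((x-\alpha)^s,(y-\beta)^t, xy-\alpha\beta\bigr).
\]
It therefore suffices to show this last quotient has $K$-dimension $s$.

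The computation itself is short after the substitution $u := x-\alpha$, $v := y-\beta$: the relation $xy - \alpha\beta = 0$ becomes $uv + \alpha v + \beta u = 0$, i.e. $v(u+\alpha) = -\beta u$. Here $\alpha\in K^\times$ enters crucially, since modulo $u^s$ the element $u+\alpha$ is a unit with explicit inverse $\alpha^{-1}\sum_{k=0}^{s-1}(-u/\alpha)^k$. Hence $v$ is forced to equal the polynomial $-\beta u(u+\alpha)^{-1}$, which lies in $u\cdot K[u]/(u^s)$. Substituting this expression back eliminates $v$ and reduces the quotient to $K[u]/(u^s)$, provided the remaining relation $v^t = 0$ is automatic; this holds because $v\in(u)$ forces $v^t\in(u^t)\subseteq(u^s)$, using the hypothesis $s\le t$. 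So the quotient is $K[u]/(u^s)$, of dimension $s$, as desired.

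The main technical point I expect is the verification that $v^t = 0$ is already implied by the other relations. It is exactly here that both hypotheses enter simultaneously: without $\alpha\ne 0$, the element $u+\alpha$ would not be a unit and $v$ could not be written as a polynomial in $u$ at all, while without $s\le t$ the inclusion $(u^t)\subseteq(u^s)$ fails and the same calculation would return dimension $t$ instead, consistent with the symmetry $J(\alpha,s)\otimes J(\beta,t) \cong J(\beta,t)\otimes J(\alpha,s)$. Everything else in the argument is routine bookkeeping in the polynomial ring.
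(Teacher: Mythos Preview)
Your argument is correct and entirely self-contained, whereas the paper does not prove the theorem at all: it simply cites Iima and Iwamatsu \cite{iimaiwamatsu09} (Corollary~2.2.11 in characteristic~$0$, Theorem~2.2.2 in positive characteristic), where the full Jordan decomposition of $J(\alpha,s)\otimes J(\beta,t)$ is worked out. Your approach is genuinely different and in fact more economical for the purpose at hand: since the paper only needs the \emph{count} of Jordan blocks, not their sizes, your reduction to $\dim_K\ker(M-\alpha\beta I)$ followed by the cokernel identification with $K[u,v]/(u^s,v^t,uv+\alpha v+\beta u)\cong K[u]/(u^s)$ gets exactly that number with a short algebraic computation, uniformly in all characteristics. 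The cited reference, by contrast, gives strictly more information (the explicit block sizes, which differ between characteristic~$0$ and positive characteristic) at the cost of a considerably longer case analysis. Your remark about where each hypothesis enters---$\alpha\in K^\times$ for the invertibility of $u+\alpha$, and $s\le t$ for $(u^t)\subseteq(u^s)$---is accurate and makes the argument transparent.
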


\begin{proof}
 In characteristic $0$ the claim follows from Corollary~2.2.11 in \cite{iimaiwamatsu09}, in positive characteristic from Theorem~2.2.2, ibid.
\end{proof}

\begin{lemma}\label{lem:SizeOfJordanBlockWithInseparableEigenvalue}
 Let $A$ be a matrix in $\GL_n(K)$ and $\alpha$ an eigenvalues of $A$. Suppose that in a matrix $A'$ in Jordan normal form, obtained from $A$ over $\overline{K}$, the Jordan block corresponding to $\alpha$ has size $s$. If the extension $K(\alpha)/K$ is inseparable, then $s=p^k$, where $p$ is the characteristic of $K$ and $k>0$.
\end{lemma}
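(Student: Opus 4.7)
The plan is to read off the Jordan block size $s$ from the structure of $V = K^n$ as a $K[x]$-module via $A$, using the structure theorem for finitely generated modules over the PID $K[x]$. The key input from inseparability is that the minimal polynomial of $\alpha$ over $K$ already has multiplicity a positive power of $p$ at each of its roots in $\overline K$, and this multiplicity will enter multiplicatively into $s$.

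First I would let $\pi \in K[x]$ denote the minimal polynomial of $\alpha$ over $K$. Since $K(\alpha)/K$ is inseparable, $\pi$ is inseparable, so there exist a maximal $r \ge 1$ and a separable $\pi_s \in K[x]$ with $\pi(x) = \pi_s(x^{p^r})$. Writing $\pi_s(y) = \prod_{i=1}^d (y - \beta_i)$ with distinct $\beta_i \in \overline K$ and substituting $y = x^{p^r}$ gives
\[
\pi(x) = \prod_{i=1}^d (x - \alpha_i)^{p^r},
\]
where $\alpha = \alpha_1, \dotsc, \alpha_d \in \overline K$ are the distinct roots of $\pi$, each occurring with multiplicity exactly $p^r$.

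By the structure theorem, $V \cong \bigoplus_j K[x]/(q_j)$ with each $q_j$ a power of a monic irreducible in $K[x]$; the Jordan blocks of $A'$ with eigenvalue $\alpha$ arise from summands whose $q_j$ is a power of $\pi$. Fix the summand $K[x]/(\pi^e)$ producing the block in question. Base-changing to $\overline K$ and applying the Chinese Remainder Theorem with the factorization above yields
\[
\overline K[x]/(\pi^e) \;\cong\; \prod_{i=1}^d \overline K[x]/\bigl((x-\alpha_i)^{p^r e}\bigr),
\]
and the $\alpha$-component on the right is a cyclic $\overline K[A]$-module of dimension $p^r e$, i.e.\ exactly one Jordan block $J(\alpha, p^r e)$. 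Hence $s = p^r e$ with $r \ge 1$, which already gives $p \mid s$ and hence $k \ge 1$.

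The hard part is then promoting the divisibility $p^r \mid s$ to the full equality $s = p^k$; equivalently, to show that the elementary-divisor exponent $e$ must itself be a power of $p$. I expect this step to require an additional rigidity input beyond the plain module decomposition: for instance, comparing the $K$-rational structure of $A$ with that of its Frobenius twist $A^{(p^r)}$, exploiting the fact that $\alpha^{p^r}$ is separable over $K$ and that raising a Jordan block to the $p^r$-th power produces $p^r$ equal subblocks, and arguing that any divisor of $e$ coprime to $p$ would violate a constraint forced by the $K$-rationality of $A$. Pinning down this final constraint on $e$ is where the real work of the lemma lies.
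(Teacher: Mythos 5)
Your reduction to the structure theorem is correct as far as it goes, and in substance it is a more careful version of the paper's own argument: the paper likewise invokes the factorization $f=(x-\alpha_1)^{p^k}\cdots(x-\alpha_m)^{p^k}$ of the inseparable minimal polynomial (citing Lang) and then reads off block sizes from multiplicities in the minimal polynomial $\mu_A$, whereas you read them off from the elementary divisors $\pi^e$, correctly obtaining $s=p^r e$ with $r\geq 1$. The genuine problem is the step you flag as ``the real work,'' namely showing that $e$ is a power of $p$. No additional rigidity input will supply this, because it is false. Take $K=\mathbb{F}_3(t)$ and $\pi=x^3-t$ (irreducible by Eisenstein, inseparable, with unique root $\alpha=t^{1/3}$ of multiplicity $3$), and let $A\in\GL_6(K)$ be the companion matrix of $\pi^2$; then $\det A=t^2\neq 0$, the extension $K(\alpha)/K$ is inseparable, yet over $\overline{K}$ the matrix $A$ is a single Jordan block $J(\alpha,6)$, and $6$ is not a power of $3$. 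Here your exponent $e$ equals $2$, so the lemma as stated is false.

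The paper's proof glosses over exactly this point: it writes $\mu_A=f\cdot g$ and asserts without justification that $g$ ``has only roots different from the roots of $f$,'' i.e.\ that $f$ divides $\mu_A$ with multiplicity one, which fails in the example above. What survives, and what you have actually proved, is the divisibility $p\mid s$, hence $s\geq p>1$. That weaker conclusion is all that is used downstream: in the proof of Lemma~\ref{lem:OneByOneJordanBlocksOfTensorProducts} one only needs that an eigenvalue generating an inseparable extension cannot occupy a Jordan block of size $1$. So your argument, stopped where you stopped it, suffices for the paper's purposes; the right fix is to weaken the statement to ``$p$ divides $s$'' (or merely ``$s>1$'') rather than to try to close the gap.
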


\begin{proof}
 First of all we note that for $K(\alpha)/K$ to be inseparable $K$ necessarily has to be of positive characteristic. Let $f$ be the minimal polynomial of $\alpha$ over $K$. Let $\alpha_1,\ldots,\alpha_m$ be the roots of $f$, where $\alpha_i=\alpha_j$ if and only if $i=j$. By \cite{lang02}, Chapter~V, Proposition~6.1 \[f=(x-\alpha_1)^{p^k}\cdot \ldots \cdot (x-\alpha_m)^{p^k}\]
for a natural number $k$, since $f$ is inseparable. Moreover $f$ divides the minimal polynomial $\mu_A$ of $A$, because every root of $f$ is a root of $\mu_A$. The remaining factor $g$ such that $\mu_A=f\cdot g$ has only roots different from the roots of $f$. Hence the multiplicity of $\alpha$ as a root of $\mu_A$ is $p^k$. Therefore $\dim\ker(A-\alpha)^j>\dim\ker(A-\alpha)^{j-1}$ if and only if $j\in \{1,\ldots, p^k\}$. This means that the Jordan block corresponding to $\alpha$ has size $p^k$.
\end{proof}

\begin{lemma}\label{lem:MultiplyingLargerAndSmallerNumbers}
 Let $x'\geq x\geq 0$, $y'\geq y\geq 0$ be real numbers. Then
\[x'y+y'x\leq x'y'+xy.\]
\end{lemma}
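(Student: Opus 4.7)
The plan is to rearrange the inequality so that it becomes a product of two non-negative factors. Moving everything to one side, the claim $x'y' + xy - x'y - y'x \geq 0$ is equivalent to
\[(x' - x)(y' - y) \geq 0,\]
as one sees by expanding the product.

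Since the hypotheses $x' \geq x$ and $y' \geq y$ say precisely that both factors $x' - x$ and $y' - y$ are non-negative, the product is non-negative, and the inequality follows. The non-negativity hypotheses $x, y \geq 0$ are actually not needed for the argument; they presumably appear because the lemma will be applied in a context (length functions, rank values) where the quantities are automatically non-negative. I do not anticipate any obstacle; the only subtlety worth mentioning in the writeup is pointing out the factorization explicitly, since this is what turns the four-term inequality into a one-line argument.
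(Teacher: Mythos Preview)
Your proof is correct and in essence identical to the paper's: both arguments come down to the factorization $x'y'+xy-x'y-y'x=(x'-x)(y'-y)\geq 0$. The paper reaches this via a longer algebraic manipulation (doubling both sides and regrouping), whereas you state the factorization directly; your observation that the hypotheses $x,y\geq 0$ are unnecessary is also correct.
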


\begin{proof}
 We calculate
\begin{align*}
2(x'y+y'x) &= x'(y+y'-y')+(x'+x-x)y+y'(x+x'-x')+(y'+y-y)x\\
&=x'y' +x'(y-y')+xy+(x'-x)y\\
&\quad\quad +y'x'+y'(x-x')+yx+(y'-y)x\\
&=2x'y'+2xy +(x'-x)(y-y')+(y'-y)(x-x')\\
&\leq 2(x'y'+xy)
\end{align*}
to complete the proof.
\end{proof}

\begin{lemma}\label{lem:OneByOneJordanBlocksOfTensorProducts}
Let $K$ be a field and $A\in \GL_n(K)$, $B\in \GL_m(K)$. Then
\[\iota(A\otimes B)\leq \iota(A)\iota(B)+(1-\iota(A))(1-\iota(B)).\]
If $\iota(A)\leq \frac{1}{2}$ and $\iota(B)\leq \frac{1}{2}$, then $\iota(A\otimes B)\leq \frac{1}{2}$.
\end{lemma}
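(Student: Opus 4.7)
My plan is to work over $\bar K$, Jordan-decompose $A$ and $B$, and split off the $1\times 1$ blocks from the rest. Write $A = A_1 \oplus A_{\geq 2}$ with $A_1$ collecting the $1\times 1$ Jordan blocks (so $A_1$ is diagonal) and $A_{\geq 2}$ the blocks of size $\geq 2$, and similarly $B = B_1 \oplus B_{\geq 2}$. Set $P = \dim(A_1)/n$, $Q = \dim(B_1)/m$, $p = \iota(A)$, $q = \iota(B)$, and $a_\alpha = \iota_\alpha(A)$, $b_\beta = \iota_\beta(B)$ (so $\sum_\alpha a_\alpha = P$, $\max_\alpha a_\alpha = p$, and analogously for $B$). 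The tensor $A \otimes B$ is the direct sum of four pieces, and the two mixed pieces contribute no $1 \times 1$ Jordan blocks at all, because any $J(\alpha,1) \otimes J(\beta,t)$ is conjugate to $J(\alpha\beta,t)$ with $t \geq 2$. Hence for each $\gamma$, the $1 \times 1$ Jordan blocks of $A \otimes B$ with eigenvalue $\gamma$ come only from the diagonal piece $A_1 \otimes B_1$ and the piece $A_{\geq 2} \otimes B_{\geq 2}$.

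For the first inequality I would bound each of the two relevant contributions to $\iota_\gamma(A \otimes B)$. The diagonal piece contributes exactly $\sum_{\alpha\beta = \gamma} a_\alpha b_\beta$, which (by iterated application of Lemma~\ref{lem:MultiplyingLargerAndSmallerNumbers}, i.e.\ the rearrangement inequality) is bounded by the sorted-paired sum $\sum_i a_{(i)} b_{(i)}$. Peeling off the top pair $a_{(1)} b_{(1)} = pq$, using $\sum_{i\geq 2} a_{(i)} = P - p$, and applying $b_{(2)} \leq Q - b_{(1)} = Q - q$, I get $\sum_{\alpha\beta = \gamma} a_\alpha b_\beta \leq pq + (P - p)(Q - q)$. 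For $A_{\geq 2} \otimes B_{\geq 2}$, Theorem~\ref{thm:FewJordanBlocksInTensorProductMatrices} says each $J(\alpha,s) \otimes J(\beta,t)$ with $s,t\geq 2$ has at most $\min(s,t) \leq st/2$ Jordan blocks in all, so at most half of its dimension sits in $1 \times 1$ blocks; its contribution to $\iota_\gamma(A \otimes B)$ is thus at most $(1-P)(1-Q)/2$. Summing and expanding $(1-p)(1-q) = ((P-p) + (1-P))((Q-q) + (1-Q))$ shows
\[
\iota_\gamma(A \otimes B) \leq pq + (P-p)(Q-q) + \tfrac{1}{2}(1-P)(1-Q) \leq pq + (1-p)(1-q),
\]
giving the first inequality on taking the supremum over $\gamma$.

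For the second claim I would keep the estimate on $A_{\geq 2} \otimes B_{\geq 2}$ but re-estimate the diagonal piece: $\iota(A) \leq 1/2$ means the count $c_\alpha(A) = n\,a_\alpha$ satisfies $c_\alpha(A) \leq n/2$ for every $\alpha$, so $\sum_{\alpha\beta = \gamma} c_\alpha(A)\, c_\beta(B) \leq (n/2) \sum_\beta c_\beta(B) = nmQ/2$, contributing at most $Q/2$, and symmetrically at most $P/2$. Hence $\iota_\gamma(A \otimes B) \leq \min(P,Q)/2 + (1-P)(1-Q)/2$, and a quick calculation (say, WLOG $P \leq Q$, whereupon the bound decreases in $Q$ and the resulting function of $P$ alone is easily checked to be at most $1/2$ on $[0,1]$, attained at the corners) completes the proof.

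The main technical input I expect to lean on is Theorem~\ref{thm:FewJordanBlocksInTensorProductMatrices}, which controls the total number of Jordan blocks of $J(\alpha,s) \otimes J(\beta,t)$ in arbitrary characteristic; once that is available, the rest is combinatorial bookkeeping with Lemma~\ref{lem:MultiplyingLargerAndSmallerNumbers} and elementary manipulations. The only subtle point is that $\iota(A \otimes B)$ takes the supremum over $\gamma \in K^\times$ while the Jordan decomposition lives over $\bar K$, but bounding $\iota_\gamma$ uniformly for all $\gamma \in \bar K^\times$ certainly suffices.
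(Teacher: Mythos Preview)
Your identification $\max_\alpha a_\alpha = p$ is the problem. By definition $\iota(A) = \sup_{\alpha \in K^\times} \iota_\alpha(A)$, the supremum taken over $K^\times$ only, whereas your $a_\alpha$ range over all eigenvalues in $\bar K^\times$. These need not agree: take $K = \mathbb{Q}$, let $A \in \GL_2(\mathbb{Q})$ have characteristic polynomial $x^2 - 2$, and let $B = \mathrm{diag}(1,\ldots,1,2) \in \GL_{10}(\mathbb{Q})$. Then $p = \iota(A) = 0$ (no rational eigenvalues), $q = \iota(B) = 9/10$, $P = Q = 1$, but $a_{(1)} = 1/2$. Your sorted sum is $\sum_i a_{(i)} b_{(i)} = \tfrac12 \cdot \tfrac{9}{10} + \tfrac12 \cdot \tfrac{1}{10} = \tfrac12$, while the bound you claim, $pq + (P-p)(Q-q) = 0 + 1 \cdot \tfrac{1}{10} = \tfrac{1}{10}$, is smaller; so the step ``peeling off $a_{(1)} b_{(1)} = pq$'' fails. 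Even re-running the argument with the true maxima $p' = a_{(1)}$, $q' = b_{(1)}$ only yields $\iota(A\otimes B) \leq p'q' + (1-p')(1-q')$, and since $(x,y) \mapsto xy + (1-x)(1-y)$ is not monotone, this does not imply the bound with $p, q$. (In the example $\iota(A\otimes B) = 0$, so the lemma certainly holds; your route just cannot see it.)

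The paper handles exactly this by splitting the convolution $\sum_{\alpha\beta=\gamma} \iota_\alpha(A)\iota_\beta(B)$ according to whether $\alpha$ lies in $K^\times$, in $\bar K_{\mathrm s}^\times \setminus K^\times$, or is inseparable over $K$. On the first piece the rearrangement argument (Lemma~\ref{lem:MultiplyingLargerAndSmallerNumbers}) works with the \emph{correct} $p, q$, since there the sum runs only over $\alpha \in K^\times$. The inseparable piece is controlled by Lemma~\ref{lem:SizeOfJordanBlockWithInseparableEigenvalue}: such eigenvalues force block size $\geq p > 1$, so contribute nothing to any $\iota_\alpha$. The separable-but-not-in-$K$ piece uses that each such $\alpha$ has at least one distinct Galois conjugate with the same $\iota_\alpha$-value, whence $\iota_\alpha(A) \leq \tfrac12$ there. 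Your argument for the second claim has the same blind spot: ``$\iota(A) \leq \tfrac12$ means $c_\alpha(A) \leq n/2$ for every $\alpha$'' is immediate only for $\alpha \in K^\times$; for $\alpha \notin K$ it is still true, but needs the Galois/inseparable dichotomy just described. Once that is supplied, your second argument goes through.
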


\begin{proof}
 We work with $A$ embedded in $\GL_n(\overline{K})$ and $B$ in $\GL_m(\overline{K})$. Let $A'$, $B'$ be matrices in Jordan normal form corresponding to $A$ and $B$, respectively. Then it is clear that the Jordan normal form of $A\otimes B$ is the same as the one of $A'\otimes B'$, since using conjugation to compute the Jordan normal form commutes with the tensor product. To obtain the Jordan normal form of $A'\otimes B'$ it is clearly sufficient to compute the Jordan normal forms of $J(\alpha,s)\otimes J(\beta,t)$ for all combinations of Jordan blocks $J(\alpha,s)$ of $A'$ and $J(\beta,t)$ of $B'$.

Since $J(\alpha,1)\otimes J(\beta, t)$ equals $\alpha J(\beta,t)$, we know that on the one hand two Jordan blocks of size $1$ yield a Jordan block of size $1$ in the Jordan normal form of $A\otimes B$. On the other hand a Jordan block of size $1$ and a larger one cannot produce a Jordan block of size $1$.
 Moreover two Jordan blocks $J(\alpha,s)$ and $J(\beta,t)$, where $1<s\leq t$, can be responsible for at most $s-1$ Jordan blocks of size $1$, by Theorem~\ref{thm:FewJordanBlocksInTensorProductMatrices}. Assume $\alpha$ is an eigenvalue of $A$ or $B$ such that $K(\alpha)/K$ is inseparable. Then by Lemma~\ref{lem:SizeOfJordanBlockWithInseparableEigenvalue} $\alpha$ corresponds to a Jordan block of size larger than or equal to the characteristic of $K$, in particular strictly larger than $1$. Denote the separable closure of $K$ by $\overline{K}_{\rm s}$ and let $\kappa(A)\coloneqq \sum_{\alpha\in \overline{K}_{\rm s}^\times}\iota_\alpha(A)$. If $\gamma\in K^\times $ such that $\iota_\gamma(A\otimes B)=\iota(A\otimes B)$ we can deduce
\begin{align*}\iota(A\otimes B)&\leq \sum_{\alpha\in K^\times}\iota_\alpha(A)\iota_{\alpha^{-1}\gamma}(B)+\sum_{\alpha\in\overline{K}_{\rm s}^\times\setminus K^\times}\iota_\alpha(A)\iota_{\alpha^{-1}\gamma}(B)\\
&\quad\quad +\tfrac{1}{2}(1-\kappa(A))(1-\kappa(B)).
\end{align*}
Here the splitting in sums over $K^\times$ and $\overline{K}_{\rm s}^\times\setminus K^\times$ is possible, because $K^\times$ is a subgroup of $\overline{K}_{\rm s}^\times$.

Let $\lambda$, $\delta$ be in $K^\times$ such that $\iota_\lambda(A)=\iota(A)$ and $\iota_\delta(B)=\iota(B)$. Then
\begin{align*}
\sum_{\alpha\in K^\times}\iota_\alpha(A)\iota_{\alpha^{-1}\gamma}(B)&= \iota_\lambda(A)\iota_{\lambda^{-1}\gamma}(B)+\iota_{\delta^{-1}\gamma}(A)\iota_\delta(B)\\
&\quad\quad  +\sum_{ \lambda ,\delta^{-1}\gamma\neq \alpha\in K^\times}\iota_\alpha(A)\iota_{\alpha^{-1}\gamma}(B)\\
&\leq \iota_\lambda(A)\iota_\delta(B)+\iota_{\delta^{-1}\gamma}(A)\iota_{\lambda^{-1}\gamma}(B)\\
&\quad\quad  +\sum_{\lambda ,\delta^{-1}\gamma\neq \alpha\in K^\times}\iota_\alpha(A)\iota_{\alpha^{-1}\gamma}(B)\\
&\leq \iota_\lambda(A)\iota_\delta(B)+\sum_{\lambda\neq \alpha\in K^\times}\iota_\alpha(A)\sum_{\delta\neq \beta\in K^\times}\iota_\beta(B),
\end{align*}
where we used Lemma~\ref{lem:MultiplyingLargerAndSmallerNumbers}.
By the choice of $\gamma$, $\lambda$ and $\delta$, and the preceding estimate of $\iota(A\otimes B)$ we arrive at
\[\iota(A\otimes B)\leq \iota(A)\iota(B)+(1-\iota(A))(1-\iota(B)),\]
which proves the first claim.

Now assume $\iota(A),\iota(B)\leq \frac{1}{2}$. If the eigenvalues of $A$ in $K$ are $\lambda_i$ such that $\iota_{\lambda_1}(A)\geq \iota_{\lambda_2}(A)\geq \ldots $ and the eigenvalues of $B$ in $K$ are $\delta_i$ such that $\iota_{\delta_1}(B)\geq \iota_{\delta_2}(B)\geq \ldots $, then we can proceed inductively from
\begin{align*}
\sum_{\alpha\in K^\times}\iota_\alpha(A)\iota_{\alpha^{-1}\gamma}(B)&\leq \iota_{\lambda_1}(A)\iota_{\delta_1}(B)+\iota_{\delta_1^{-1}\gamma}(A)\iota_{\lambda_1^{-1}\gamma}(B)\\
&\quad\quad +\sum_{\lambda_1 ,\delta_1^{-1}\gamma\neq \alpha\in K^\times}\iota_\alpha(A)\iota_{\alpha^{-1}\gamma}(B) 
\end{align*}
to obtain
\[\sum_{\alpha\in K^\times}\iota_\alpha(A)\iota_{\alpha^{-1}\gamma}(B)\leq \sum_{i}\iota_{\lambda_i}(A)\iota_{\delta_i}(B).\]

If $K(\alpha)/K$ is separable, then $\alpha$ has at least one Galois conjugate eigenvalue. This implies
\[\sum_{\alpha\in\overline{K}_{\rm s}^\times\setminus K^\times}\iota_\alpha(A)\iota_{\alpha^{-1}\gamma}(B)\leq \tfrac{1}{2}\sum_{\alpha\in\overline{K}_{\rm s}^\times\setminus K^\times}\iota_\alpha(A)\sum_{\beta\in\overline{K}_{\rm s}^\times\setminus K^\times}\iota_\beta(B).\]
Combining the different estimates proves $\iota(A\otimes B)\leq \frac{1}{2}$.
\end{proof}

\begin{proposition}[\cite{arzhantsevapaunescu13}, Proposition~5.3]\label{prop:ArzhantsevaPaunescuProposition5.3}
 Let $f\colon [\frac{1}{2},1]\rightarrow [\frac{1}{2},1]$ be defined by 
\[f(x)\coloneqq x^2+(1-x)^2.\]
 Then $f$ is a strictly monotone increasing bijection and $x\in[\frac{1}{2},1[$ implies 
\[\lim_{m\rightarrow \infty}f^m(x)=\tfrac{1}{2}.\]
\end{proposition}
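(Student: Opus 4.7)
The plan is to rewrite $f(x) = 2x^2 - 2x + 1$, note the two obvious fixed points $f(1/2) = 1/2$ and $f(1) = 1$, and factor the key quantities so that every claim reduces to the sign of $(2x-1)(x-1)$ on $[1/2, 1]$.

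First I would verify monotonicity by computing $f'(x) = 4x - 2$, which is nonnegative on $[1/2, 1]$ and strictly positive on $(1/2, 1]$, so $f$ is strictly increasing there. Combined with $f(1/2) = 1/2$ and $f(1) = 1$ and continuity, this yields a bijection $[1/2, 1] \to [1/2, 1]$.

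Next, for $x \in [1/2, 1[$ I would establish the two inequalities
\[\tfrac{1}{2} \leq f(x) < x,\]
both of which follow from the factorization $f(x) - x = 2x^2 - 3x + 1 = (2x-1)(x-1)$, which is nonpositive on $[1/2, 1]$ and strictly negative on $(1/2, 1)$, together with $f(1/2) = 1/2$ and monotonicity of $f$ (which gives $f(x) \geq 1/2$). It follows that the sequence $(f^m(x))_{m\in\mathbb{N}}$ is nonincreasing in $[1/2, 1[$ and bounded below by $1/2$, hence convergent.

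Finally, I would conclude by continuity: the limit $L$ of $f^m(x)$ must satisfy $f(L) = L$, i.e. $(2L-1)(L-1) = 0$, so $L \in \{1/2, 1\}$; since the sequence is bounded above by $x < 1$, we get $L = 1/2$. There is no real obstacle here — the whole argument is a direct monotone-convergence argument built on the single factorization $(2x-1)(x-1)$, and the only thing to be a little careful about is ensuring the iterates never leave $[1/2, 1[$, which is automatic once monotonicity of $f$ and $f(x) < x$ are in place.
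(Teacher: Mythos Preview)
Your argument is correct and is the standard proof of this elementary fact. Note, however, that the paper does not actually prove this proposition: it is stated with a citation to \cite{arzhantsevapaunescu13}, Proposition~5.3, and used without proof. So there is no ``paper's own proof'' to compare against; your write-up simply supplies the missing verification, and everything in it is sound (the factorization $f(x)-x=(2x-1)(x-1)$ is exactly the right observation, and the monotone-convergence conclusion is clean).
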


\begin{lemma}\label{lem:RankLengthAndJordanLengthForDirectSumsAndTensorProducts}
 Let $A\in \GL_n(K)$, $B\in \GL_m(K)$ and $\alpha\in K^\times$. Then
\begin{enumerate}[label={\rm(\arabic*)}]
 \item $\lr(A\oplus B)=\frac{n}{n+m}\lr(A)+\frac{m}{n+m}\lr(B)$,\label{enum:RankLengthOfDirectSum}
\item $\lr(A\otimes B)\leq \lr(A)+\lr(B)$,\label{enum:RankLengthOfTensorProduct}
\item $\lj(\alpha\cdot A\oplus B) \leq \min\{\frac{n}{n+m}+\frac{m}{n+m}\lj(B),\frac{n}{n+m}\lj(A)+\frac{m}{n+m}\}$\\
  and $\lj((\alpha A)\oplus B)=\frac{n}{n+m}\lj(A)+\frac{m}{n+m}\lj(B)$,\\
 if $\lj(A)=\lr(\alpha\beta A)$ and $\lj(B)=\lr(\beta B)$ for some $\beta\in K^\times$,\label{enum:JordanLengthOfDirectSum}
\item $\lj(\alpha\cdot A\otimes B)\leq \lj(A)+\lj(B)$.\label{enum:JordanLengthOfTensorProduct}
\end{enumerate}
 \end{lemma}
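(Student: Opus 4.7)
The plan is to dispatch each part with a short rank computation based on the right algebraic identity; only the Jordan-length formulas require any real care, and only because the infima defining $\lj$ need not be attained. For part (1) the observation $1_{n+m}-A\oplus B=(1_n-A)\oplus(1_m-B)$ together with additivity of rank on direct sums gives the formula after dividing by $n+m$. For part (2) the key identity is
\[1_{nm}-A\otimes B=(1_n-A)\otimes 1_m+A\otimes(1_m-B),\]
obtained by adding and subtracting $A\otimes 1_m$; subadditivity of rank, the multiplicativity $\rank(X\otimes Y)=\rank X\cdot\rank Y$, and invertibility of $A$ then give $\rank(1-A\otimes B)\leq m\rank(1-A)+n\rank(1-B)$, and dividing by $nm$ yields $\lr(A\otimes B)\leq\lr(A)+\lr(B)$.

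For part (3) I would first apply part (1) inside the definition of $\lj$ to get
\[\lj(\alpha A\oplus B)=\inf_{\gamma\in K^\times}\Bigl(\tfrac{n}{n+m}\lr(\gamma\alpha A)+\tfrac{m}{n+m}\lr(\gamma B)\Bigr).\]
Restricting $\gamma$ along a minimising sequence for $\lj(B)$ and using the trivial bound $\lr(\gamma\alpha A)\leq 1$ produces the first upper bound; the symmetric restriction, based on the observation that $\{\gamma\alpha\mid \gamma\in K^\times\}=K^\times$, produces the second. For the claimed equality, the hypothesis supplies a single $\beta$ realising both infima at once, so $\gamma=\beta$ gives the upper bound $\tfrac{n}{n+m}\lj(A)+\tfrac{m}{n+m}\lj(B)$; the matching lower bound comes from the uniform estimates $\lr(\gamma\alpha A)\geq\lj(A)$ and $\lr(\gamma B)\geq\lj(B)$, valid for every $\gamma\in K^\times$.

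For part (4), scalars can be absorbed into the infimum: since $\gamma(\alpha A\otimes B)=(\gamma\alpha)(A\otimes B)$ and $\gamma\alpha$ ranges over $K^\times$ when $\gamma$ does, one has $\lj(\alpha A\otimes B)=\lj(A\otimes B)$. For arbitrary $\beta_1,\beta_2\in K^\times$ the rewriting $\beta_1\beta_2(A\otimes B)=(\beta_1A)\otimes(\beta_2B)$ together with part (2) gives $\lr(\beta_1\beta_2(A\otimes B))\leq\lr(\beta_1A)+\lr(\beta_2B)$, and successively taking the infimum over $\beta_1$ and $\beta_2$ yields the claim. The one place where attention is really required is the equality in part (3): without the hypothesis about a common $\beta$ the infima of $\lr(\gamma\alpha A)$ and $\lr(\gamma B)$ need not be attained at the same scalar, so the lower and upper bounds need not meet; once that hypothesis is in hand the remainder is routine bookkeeping.
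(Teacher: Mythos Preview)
Your proof is correct in all four parts. The main difference from the paper is in part~(2): the paper argues via kernels, observing that $\ker(1-A)\otimes\ker(1-B)\subset\ker(1-A\otimes B)$ to get $\dim\ker(1-A\otimes B)\geq n_0m_0$ with $n_0=\dim\ker(1-A)$, $m_0=\dim\ker(1-B)$, and then performs an explicit arithmetic estimate to show $\tfrac{nm-n_0m_0}{nm}\leq\tfrac{n-n_0}{n}+\tfrac{m-m_0}{m}$. Your route via the identity $1-A\otimes B=(1-A)\otimes 1+A\otimes(1-B)$ and rank subadditivity is shorter and bypasses that arithmetic entirely. In part~(3) you are more careful than the paper about whether the infimum defining $\lj$ is attained; in fact it always is, since $\rank(\gamma-A)$ takes only finitely many integer values as $\gamma$ ranges over $K^\times$, so the paper's ``there are $\beta_1,\beta_2$'' is justified and your minimising-sequence argument, while correct, is not strictly needed. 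Parts~(1) and~(4) match the paper's reasoning.
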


\begin{proof}
Equation~\ref{enum:RankLengthOfDirectSum} follows from
\[\rank(1-A\oplus B)=\rank(1-A)+\rank(1-B).\]

The matrix $A\otimes B$ acts on $K^{nm}$ by $A\otimes B (v\otimes w)= A(v)\otimes B(w)$ for all $v\in K^n$, $w\in K^m$ and linear extension. Therefore $A(v)=v$ and $B(w)=w$ implies $A\otimes B(v\otimes w)=v\otimes w$, whence $\dim\ker(1-A\otimes B)\geq \dim\ker(1-A)\cdot \dim\ker(1-B)$.
Let $n_0\coloneqq \dim\ker(1-A)$ and $m_0\coloneqq \dim\ker(1-B)$. Then
\begin{align*}
 2\frac{nm-n_0m_0}{nm}&=\frac{nm-nm_0+nm_0-n_0m_0}{nm}+\frac{nm-mn_0+mn_0-n_0m_0}{nm}\\
&=\frac{m-m_0}{m}+\frac{m_0(n-n_0)}{nm}+\frac{n-n_0}{n}+\frac{n_0(m-m_0)}{nm}\\
&=\left(1+\frac{m_0}{m}\right)\frac{n-n_0}{n}+\left(1+\frac{n_0}{n}\right)\frac{m-m_0}{m}\\
&\leq 2\frac{n-n_0}{n}+2\frac{m-m_0}{m}
\end{align*}
implies $\lr(A\otimes B)\leq \lr(A)+\lr(B)$.

There are $\beta_1,\beta_2\in K^\times$ such that $\lj(A)=\lr(\alpha \beta_1 A)$ and $\lj(B)=\lr(\beta_2 B)$. If $\beta_1=\beta_2$, then $\lj((\alpha A)\oplus B)=\frac{n}{n+m}\lj(A)+\frac{m}{n+m}\lj(B)$ follows from \ref{enum:RankLengthOfDirectSum} and invariance of $\lj$ under scalar multiplication. Otherwise by the definition of $\lj$ as an infimum only the inequality in \ref{enum:JordanLengthOfDirectSum} holds.

At last \ref{enum:JordanLengthOfTensorProduct} follows from \ref{enum:RankLengthOfTensorProduct} by linearity of the tensor product and invariance of $\lj$ under scalar multiplication.
\end{proof}

Let $A$ be a matrix in $\GL_n(K)$. We write
\[A^{\otimes k} \coloneqq A\otimes \ldots \otimes A,\]
where $A$ appears $k$ times on the right side.

The construction in the next theorem is essentially from \cite{arzhantsevapaunescu13}, Theorem~5.10. The proof is modified, though, to fit our treatment using almost homomorphisms, to work in arbitrary characteristic and when approximating with the Jordan length.

\begin{theorem}\label{thm:AmplificationInMatrixGroups}
 Let $K$ be a field. The class of groups $\set{\GL_n(K)}{n\in \mathbb{N}}$ with rank length or Jordan length has the amplification property. In particular, for every projectively $K$-sofic group $\Gamma$, finite subset $E\subset \Gamma$ and $\epsilon>0$ there exists an $(E,\epsilon)$-homomorphism $\phi$ satisfying $\lj(\phi(g))>\frac{1}{4}-\epsilon$ for all $g\in E$. In the case of $\Gamma$ being $K$-sofic we can achieve the analogous estimate $\lr(\phi(g))>\frac{1}{8}-\epsilon$.
\end{theorem}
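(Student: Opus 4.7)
The plan is to generalize the amplification scheme from \cite{arzhantsevapaunescu13}: given an $(E,\epsilon')$-homomorphism $\phi\colon\Gamma\to\GL_n(K)$, define the amplification mapping $\Phi\colon\GL_n(K)\to\GL_{n^{2^m}}(K)$ by $\Phi(A)\coloneqq A^{\otimes 2^m}$ for a tensor exponent $m$ to be chosen in terms of $E$ and $\epsilon$. Because $\Phi$ is a genuine group homomorphism, any multiplicative defect $\phi(g)\phi(h)\phi(gh)^{-1}$ is sent by $\Phi$ to its $2^m$-th tensor power, and by Lemma~\ref{lem:RankLengthAndJordanLengthForDirectSumsAndTensorProducts}\ref{enum:RankLengthOfTensorProduct}--\ref{enum:JordanLengthOfTensorProduct} the length of such a tensor power is at most $2^m$ times the original defect. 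Hence condition~\ref{def:ApprPropMultiplication} is secured for $\Phi\circ\phi$ by taking $\epsilon'\leq 2^{-m}\epsilon$.

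For the single-element lower bound in the Jordan-length case, I would iterate Lemma~\ref{lem:OneByOneJordanBlocksOfTensorProducts} to get $\iota(A^{\otimes 2^m})\leq f^m(\iota(A))$, where $f$ is the quadratic map from Proposition~\ref{prop:ArzhantsevaPaunescuProposition5.3} whose iterates converge to $\tfrac12$ on $[\tfrac12,1)$. Since $\lj(\phi(g))\geq\delta_g>0$ forces $\iota(\phi(g))\leq 1-\delta_g<1$ by Proposition~\ref{prop:LengthFunctionsAndIotaValuesEstimates}, and $E$ is finite, a single choice of $m$ makes $\iota(\phi(g)^{\otimes 2^m})\leq\tfrac12+\eta$ uniformly for $g\in E\setminus\{1\}$. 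Proposition~\ref{prop:LengthFunctionsAndIotaValuesEstimates} then yields $\lj(\Phi(\phi(g)))\geq\tfrac14-\tfrac\eta2$, and taking $\eta$ small gives the claimed $\tfrac14-\epsilon$ bound.

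In the rank-length case the main obstacle appears: the hypothesis $\lr(\phi(g))\geq\delta_g$ only forces $\iota_1(\phi(g))<1$ and not $\iota(\phi(g))<1$ (for instance, when $\phi(g)$ is close to a nontrivial scalar matrix), so the pure tensor-power construction may preserve $\iota\approx 1$, or even destroy $\lr$ entirely if that scalar is a $2^m$-th root of unity in $K$. The remedy I would employ is to replace $\Phi$ by a direct sum of scalar-twisted tensor powers, $\Phi(A)\coloneqq\bigoplus_{\alpha\in S}\alpha A^{\otimes 2^m}$ for a carefully chosen finite $S\subset K^\times$. Using the identity $\iota_1(\alpha A)=\iota_{\alpha^{-1}}(A)$ and Lemma~\ref{lem:RankLengthAndJordanLengthForDirectSumsAndTensorProducts}\ref{enum:RankLengthOfDirectSum}, the average over $S$ translates the $\iota$-bound on $\phi(g)^{\otimes 2^m}$ into an $\iota_1$-bound on the direct sum; the averaging costs a factor of $\tfrac12$ relative to the Jordan analysis, accounting for the $\tfrac18-\epsilon$ figure rather than $\tfrac14-\epsilon$. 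The delicate part is choosing $S$ inside $K^\times$ in arbitrary characteristic while keeping the resulting object in $\{\GL_n(K):n\in\mathbb{N}\}$ and avoiding accidental coincidences between scalar twists and eigenvalues of $\phi(g)$.

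Once the amplification mapping is constructed in both versions, the two ``in particular'' claims follow immediately: apply the amplification property to the almost homomorphisms guaranteed by (projective) $K$-soficity of $\Gamma$, normalize via Proposition~\ref{prop:RelaxedAlmostHomomorphisms} if desired, and read off $\lj(\phi(g))>\tfrac14-\epsilon$ respectively $\lr(\phi(g))>\tfrac18-\epsilon$ for $g\in E\setminus\{1\}$.
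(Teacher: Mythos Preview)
Your Jordan-length argument is essentially the paper's: iterate the tensor square, use Lemma~\ref{lem:OneByOneJordanBlocksOfTensorProducts} and Proposition~\ref{prop:ArzhantsevaPaunescuProposition5.3} to push $\iota$ toward $\tfrac12$, then read off the bound via Proposition~\ref{prop:LengthFunctionsAndIotaValuesEstimates}. One detail you elide: once some iterate has $\iota<\tfrac12$ the estimate $\iota(A\otimes A)\le f(\iota(A))$ is no longer useful (since $f(x)>x$ there); it is the \emph{second} clause of Lemma~\ref{lem:OneByOneJordanBlocksOfTensorProducts} that pins $\iota\le\tfrac12$ from then on. The paper makes exactly this case distinction.

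For the rank length you have correctly located the obstacle---$\lr(\phi(g))\ge\delta$ controls only $\iota_1$, not $\iota$---but your proposed fix does not work. The map $\Phi(A)=\bigoplus_{\alpha\in S}\alpha\,A^{\otimes 2^m}$ is not multiplicative, and this destroys condition~\ref{def:ApprPropMultiplication} outright rather than merely perturbing it: even when $AB=C$ exactly,
\[
\Phi(A)\,\Phi(B)\,\Phi(C)^{-1}\;=\;\bigoplus_{\alpha\in S}\alpha^{2}\alpha^{-1}(ABC^{-1})^{\otimes 2^m}\;=\;\bigoplus_{\alpha\in S}\alpha\cdot 1,
\]
which has rank length $\lvert S\setminus\{1\}\rvert/\lvert S\rvert$. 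Any nontrivial $S$ therefore makes the defect of $\Phi\circ\phi$ bounded away from zero no matter how good $\phi$ is, and no clever choice of $S$ (nor passage to an extension of $K$) cures this.

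The paper's remedy avoids scalar twists altogether. It takes
\[
\psi(g)\;=\;\phi(g)^{\otimes 2^m}\ \oplus\ \bigl(\phi(g)\otimes 1\bigr),
\]
a direct sum of two maps each of which \emph{is} multiplicative in the matrix variable, so the defect of $\psi$ is controlled by that of $\phi$ via Lemma~\ref{lem:RankLengthAndJordanLengthForDirectSumsAndTensorProducts}\ref{enum:RankLengthOfDirectSum},\ref{enum:RankLengthOfTensorProduct}. One then splits on $g\in E\setminus\{1\}$. If $\iota(\phi(g))\le 1-\delta$, the tensor-power summand already has $\lr\ge\lj\ge\tfrac14-2\epsilon$ by the Jordan analysis. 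If instead $\iota(\phi(g))>1-\delta$, the dominant eigenvalue cannot be $1$ (else $\lr(\phi(g))\le 1-\iota_1(\phi(g))<\delta$), so $\iota_1(\phi(g))<\delta$, whence $\lr(\phi(g))\ge\tfrac12(1-\iota_1(\phi(g)))>\tfrac12(1-\delta)$; together with $\lr(\phi(g))\ge\delta$ this forces the untouched second summand to have $\lr\ge\tfrac14$. Either way one summand contributes at least roughly $\tfrac14$, and the direct-sum weighting yields the $\tfrac18$ bound.
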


\begin{proof}
We will first treat the case of approximation with the Jordan length. Consider any projectively $K$-sofic group $\Gamma$, $\epsilon>0$ and a finite subset $E\subset \Gamma$, and let $\delta\coloneqq\min_{g\in E}\delta_g$. By Proposition~\ref{prop:ArzhantsevaPaunescuProposition5.3} there is a natural number $m$ such that $f^m(1-\delta)\leq \frac{1}{2}+2\epsilon$, where $f(x)\coloneqq x^2+(1-x)^2$. Choose $\epsilon'>0$ such that $\epsilon'<2^{-m}\epsilon$. Then there exists an $(E,\epsilon')$-homomorphism $\phi\colon \Gamma\rightarrow \GL_n(K)$, where $\lj(\phi(g))\geq \delta$ for all $g\in E$ and we can assume $\phi(1)=1$.
Recursively define
\[\phi_1(g)\coloneqq \phi(g),\quad\quad \phi_{k+1}(g)\coloneqq \phi_k(g)\otimes \phi_k(g).\]
We shall prove that $\phi_m\colon \Gamma\rightarrow \GL_{n^{2^m}}(K)$ is an $(E,\epsilon)$-homomorphism satisfying $\lj(\phi(g))\geq \frac{1}{4}-\epsilon$ for all $g\in E$. If $g\in E$, then $\iota(\phi(g))\leq 1-\lj(\phi(g))\leq 1-\delta$. Therefore $\iota(\phi_k(g))\leq f^k(\iota(\phi(g)))$ as long as $\iota(\phi_{k-1}(g))\geq \frac{1}{2}$. If $\iota(\phi_{k-1}(g))<\frac{1}{2}$ for one $k\leq m$, then $\iota(\phi_k(g))<f(\iota(\phi_{k-1}(g)))<\frac{1}{2}$ by Lemma~\ref{lem:OneByOneJordanBlocksOfTensorProducts}. Otherwise by the choice of $m$ still $\iota(\phi_m(g))\leq \frac{1}{2}+2\epsilon$. Hence in any case 
\[\lj(\phi_m(g))\geq \tfrac{1}{2}(1-\iota(\phi_m(g)))\geq \tfrac{1}{4}-\epsilon.\]
Furthermore Lemma~\ref{lem:RankLengthAndJordanLengthForDirectSumsAndTensorProducts} implies
\begin{align*}
&\lj(\phi_m(g)\phi_m(h)\phi_m(gh)^{-1})\\
&\quad\quad =\lj((\phi(g)\phi(h)\phi(gh)^{-1})^{\otimes 2^m})\\
&\quad\quad \leq 2^m\lj(\phi(g)\phi(h)\phi(gh)^{-1})\leq \epsilon,
\end{align*}
whenever $g,h\in E$.

Now suppose we are approximating with the rank length. To the pair $(E,\epsilon)$ choose $m$ such that $f^m(1-\delta)\leq \frac{1}{2}+4\epsilon$, and $\epsilon'<2^{-m}\epsilon$. Then there is an $(E,\epsilon')$-homomorphism $\phi$ such that $\lr(\phi(g))\geq \delta$ for all $g\in E$. We define $\phi_m$ as before, and additionally
\[\chi_k(g)\coloneqq \phi(g)\otimes \id_{n^{2^k}},\quad\quad \psi_k(g)\coloneqq \phi_k(g)\oplus \chi_k(g).\]

If $\iota(\phi(g))\leq 1-\delta$, then we proceed as above to deduce $\lr(\phi_m(g))\geq \lj(\phi_m(g))\geq \frac{1}{4}-2\epsilon$, and hence $\lr(\psi_m(g))\geq \frac{1}{8}-\epsilon$. If $\iota(\phi(g))>1-\delta$, then $\iota(\phi(g))=\iota_1(\phi(g))$ is impossible, because this would imply $\lr(\phi(g))\leq 1-\iota_1(\phi(g))<\delta$, contrary to the hypothesis. Thus we can assume $\iota(\phi(g))=\iota_\alpha(\phi(g))$, where $\alpha\neq 1$. In this case $\iota_1(\phi(g))\leq 1-\iota_\alpha(\phi(g))<\delta$ and so $\lr(\chi_m(g))=\lr(\phi(g))>\frac{1}{2}(1-\delta)$. At the same time $\lr(\chi_m(g))>\delta$, which implies $\lr(\chi_m(g))\geq \frac{1}{4}$. Thus $\lr(\psi_m(g))\geq \frac{1}{8}$ follows. Showing $\lj(\psi_m(g)\psi_m(h)\psi_m(gh)^{-1})\leq \epsilon$ for $g,h\in E$ works as before.
\end{proof}

Note that Proposition~5.13 in \cite{arzhantsevapaunescu13}, which is a special case of the previous theorem for matrices over $\mathbb{C}$ and the rank length, works with $\frac{1}{4}-\epsilon$ instead of $\frac{1}{8}-\epsilon$.

We will use the amplification properties proved in the previous theorem in \ref{sec:LinearlySoficAndProjectivelyLinearlySoficGroups}.

\section{Linear group approximation}\label{sec:GroupApproximationWithMatrices}
Since we are in particular interested in finite matrix groups, we will use the abbreviation (\deph{projectively}) $q$\deph{-sofic} instead of (projectively) $\mathbb{F}_q$-sofic. If $\Gamma$ can be embedded in an ultraproduct of groups $\GL_{n_i}(K_i)$ with respect to the Jordan length, where the $K_i$ are finite fields of characteristic $p_i$ and moreover $\lim_{\mathfrak{u}}p_i=\infty$, then we call $\Gamma$ \deph{projectively} $0$\deph{-sofic}. If the rank length is used instead we will call $\Gamma$ $0$\deph{-sofic}. We will also write (\deph{projectively}) \deph{prime sofic} or (\deph{projectively}) \deph{zero sofic} if $\Gamma$ is (projectively) $q$-sofic, and $q$ is a prime or $0$, respectively. In the following we will examine the interplay of approximation with the rank length and Jordan length, and approximation in matrix groups over different fields.

The proof of the next proposition is clear.

\begin{proposition}\label{prop:PrimeSoficForInfinitelyManyPrimes}
 Let $\Gamma$ be a (projectively) $p$-sofic group for infinitely many primes $p$. Then $\Gamma$ is (projectively) zero sofic.
\end{proposition}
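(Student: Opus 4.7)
My plan is to combine the amplification result of Theorem~\ref{thm:AmplificationInMatrixGroups} with a diagonal construction of almost homomorphisms running through primes of growing size. Since the $\mathscr{G}$-approximation property is local (it needs only be tested on finite subsets), I may first replace $\Gamma$ by any of its finitely generated subgroups, so assume $\Gamma$ is countable and enumerate $\Gamma = \{g_1, g_2, \ldots\}$, setting $E_n := \{g_1^{\pm 1}, \ldots, g_n^{\pm 1}\}$. Let $p_1 < p_2 < \ldots$ be a strictly increasing enumeration of primes for which $\Gamma$ is (projectively) $p$-sofic.

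For each $n$, since $\Gamma$ is (projectively) $\mathbb{F}_{p_n}$-sofic, Theorem~\ref{thm:AmplificationInMatrixGroups} produces an $(E_n, 1/n)$-homomorphism $\phi_n \colon \Gamma \to \GL_{k_n}(\mathbb{F}_{p_n})$ with $\lr(\phi_n(g)) \geq \tfrac{1}{8} - \tfrac{1}{n}$ in the rank length case, or $\lj(\phi_n(g)) \geq \tfrac{1}{4} - \tfrac{1}{n}$ in the Jordan length case, for every $g \in E_n \setminus \{1\}$. The essential feature of amplification is that these lower bounds are absolute, independent of the prime $p_n$.

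Next I pick any non-principal ultrafilter $\mathfrak{u}$ on $\mathbb{N}$ and form the metric ultraproduct
\[
U := \mup{\prod_{n\in\mathbb{N}} \GL_{k_n}(\mathbb{F}_{p_n})}{\mathfrak{u}}.
\]
The assignment $g \mapsto [(\phi_n(g))_n]$ is a homomorphism $\Gamma \to U$: for any fixed $g, h \in \Gamma$ eventually $g, h \in E_n$, so the multiplication defect $\ell(\phi_n(g)\phi_n(h)\phi_n(gh)^{-1})$ is at most $1/n$ and vanishes in the ultralimit. It is injective since for $g \neq 1$ eventually $g \in E_n$, giving $\lim_\mathfrak{u} \ell(\phi_n(g)) \geq \tfrac{1}{8}$ (respectively $\tfrac{1}{4}$). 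As $p_n \to \infty$ in $\mathbb{N}$, also $\lim_\mathfrak{u} p_n = \infty$, and thus $\Gamma$ embeds into an ultraproduct of matrix groups over finite fields whose characteristics tend to infinity along $\mathfrak{u}$, which is exactly the definition of (projectively) zero sofic.

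The delicate point, and the reason for invoking amplification, is that without a uniform lower bound the constants $\delta_g$ arising from $\mathbb{F}_{p_n}$-soficity may a priori depend on $p_n$ and degenerate to zero as $n \to \infty$; the ultralimit of $\ell(\phi_n(g))$ would then fail to be bounded away from zero and injectivity would collapse. Theorem~\ref{thm:AmplificationInMatrixGroups} is tailor-made to circumvent exactly this obstruction by supplying the universal constant $\tfrac{1}{8}$ (respectively $\tfrac{1}{4}$).
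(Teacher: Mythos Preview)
Your proof is correct. The paper actually omits the proof entirely, remarking only that it is clear; your argument is the natural way to make this precise, and your identification of amplification (Theorem~\ref{thm:AmplificationInMatrixGroups}) as the crux---needed to secure a lower bound on $\ell(\phi_n(g))$ that does not degenerate as the prime varies---is exactly why the author can regard the statement as evident once amplification has been established.
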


A metric ultraproduct of symmetric groups $\mup{\prod_{i\in I} S_{n_i}}{\mathfrak{u}}$ is called a \deph{universal sofic} group. We define \deph{universal (projectively) linearly sofic} groups as ultraproducts of groups $\GL_{n_i}(K_i)$ equipped with the rank length (or Jordan length). Hence by Theorem~\ref{thm:ApproximationPropertyAndMetricUltraproducts} we deduce the following statement.

\begin{proposition}\label{prop:ProjectivelyLinearlySoficGroupsAndUltraproducts}
 Let $\Gamma$ be a group. Then $\Gamma$ is (projectively) linearly sofic if and only if it embeds into a universal (projectively) linearly sofic group.
\end{proposition}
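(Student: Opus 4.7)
The plan is to reduce the statement directly to Theorem~\ref{thm:ApproximationPropertyAndMetricUltraproducts}, since a universal (projectively) linearly sofic group is, by definition, a metric ultraproduct of groups from the class $\GLinG$ equipped with the relevant length function.

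First I would take $\mathscr{G} = \GLinG$ equipped with the rank length $\lr$ for the linearly sofic case, and the same class equipped with the Jordan length $\lj$ for the projectively linearly sofic case. The definitions given at the end of Section~\ref{sec:LengthFunctions} are precisely the $\mathscr{G}$-approximation property in each of these two settings, so by unfolding definitions there is nothing to prove beyond invoking the general ultraproduct characterization.

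Next I would verify that the hypotheses of Theorem~\ref{thm:ApproximationPropertyAndMetricUltraproducts} are satisfied. The theorem requires each group in $\mathscr{G}$ to carry an invariant pseudo length function of bounded diameter. As recalled in the definitions of $\lr$ and $\lj$ (with the reference to \cite{stolzthom13}), $\lr$ is an invariant length function and $\lj$ is an invariant pseudo length function on every $\GL_n(K)$. Moreover both take values in $[0,1]$, since $\rank(\alpha - g) \leq n$, so the diameter is bounded by $1$ uniformly in $n$ and $K$. Hence $\GLinG$ with either length function fits into the framework of Theorem~\ref{thm:ApproximationPropertyAndMetricUltraproducts}.

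Finally I would apply the theorem to conclude: $\Gamma$ has the $\mathscr{G}$-approximation property if and only if there is an index set $I$, an ultrafilter $\mathfrak{u}$ on $I$ and groups $G_i = \GL_{n_i}(K_i) \in \mathscr{G}$ such that $\Gamma$ embeds into $\mup{\prod_{i\in I} G_i}{\mathfrak{u}}$; by the very definition this ultraproduct is a universal (projectively) linearly sofic group, which yields both implications simultaneously. There is no genuine obstacle here, the only subtlety being to run the argument twice, once with the rank length and once with the Jordan length, and to make sure not to conflate the two (a fixed embedding uses a single choice of length function on the target ultraproduct).
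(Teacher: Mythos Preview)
Your proposal is correct and matches the paper's own argument exactly: the paper simply states that by Theorem~\ref{thm:ApproximationPropertyAndMetricUltraproducts} the proposition follows from the definitions, and does not write out a separate proof. Your check that $\lr$ and $\lj$ are invariant pseudo length functions of diameter at most $1$ is more explicit than what the paper records, but this is just unpacking definitions.
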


The whole terminology of group approximation with matrices we introduced so far can be used accordingly to define respective universal groups. (For an instance, a universal zero sofic group would be $\mup{\prod \GL_{n_i}(K_i)}{\mathfrak{u}}$, where $K_i$ is finite of characteristic $p_i$ and $\lim_{\mathfrak{u}}p_i=\infty$.)
Proposition~\ref{prop:ProjectivelyLinearlySoficGroupsAndUltraproducts} can be reformulated for such a more restrictive setup.

The following proposition is not hard to prove. (Confer also Theorem~1.3 in \cite{arzhantsevapaunescu13}.)

\begin{proposition}\label{prop:SoficImpliesLinearlySofic}
Let $K$ be any field. Every sofic group is (projectively) $K$-sofic.
\end{proposition}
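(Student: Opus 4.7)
The plan is to embed the symmetric groups into general linear groups via permutation matrices and compare the Hamming length with the rank (or Jordan) length of the resulting matrices.

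First, for any field $K$ and any $n$, we get a group homomorphism $\iota\colon S_n\hookrightarrow \GL_n(K)$ sending $\pi$ to its permutation matrix $P_\pi$. The key is the comparison
\[\tfrac{1}{2}\lh(\pi)\leq \lr(P_\pi)\leq \lh(\pi), \qquad \lj(P_\pi)=\lr(P_\pi).\]
For the rank-length estimate, if $\pi$ has $c$ cycles (fixed points counted as cycles of length $1$) on $[n]$, then restricted to the span of each cycle of length $k$, the matrix $1-P_\pi$ has kernel spanned by the constant vector, so $\rank(1-P_\pi)=n-c$. Writing $f$ for the number of fixed points and letting $n-f$ be the number of moved points (so $\lh(\pi)=(n-f)/n$), the non-fixed cycles contribute a total length of $n-f$ across at most $(n-f)/2$ cycles, which gives $c\leq f+(n-f)/2$ and hence $\rank(1-P_\pi)=n-c\geq (n-f)/2$. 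The upper bound $\rank(1-P_\pi)\leq n-f$ is immediate.

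For the Jordan length, a short per-cycle eigenvector calculation shows that on a single $k$-cycle the $\alpha$-eigenspace of the cyclic permutation matrix is $1$-dimensional if $\alpha^k=1$ and trivial otherwise, in any characteristic. Consequently $\dim\ker(\alpha-P_\pi)=|\{j:\alpha^{k_j}=1\}|\leq c$, with equality at $\alpha=1$. Hence $\sup_{\alpha\in K^\times}\dim\ker(\alpha-P_\pi)=c$ and therefore $\lj(P_\pi)=\lr(P_\pi)$.

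The conclusion is then routine. Given a sofic group $\Gamma$, a finite set $E\subset \Gamma$ and $\epsilon>0$, pick an $(E,\epsilon)$-homomorphism $\psi\colon \Gamma\to S_n$ with $\lh(\psi(g))\geq \delta_g$ for all $g\in E\setminus\{1\}$, and set $\phi\coloneqq \iota\circ\psi$. Since $\iota$ is a genuine homomorphism, the multiplicativity defect is literally $P_{\psi(g)\psi(h)\psi(gh)^{-1}}$, so \ref{def:ApprPropOne} and \ref{def:ApprPropMultiplication} for $\phi$ with respect to $\lr$ (and thus $\lj\leq \lr$) follow from the corresponding conditions for $\psi$ and the inequality $\lr(P_\pi)\leq \lh(\pi)$. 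For \ref{def:ApprPropSingleElement} the lower bound gives $\lr(\phi(g))=\lj(\phi(g))\geq \tfrac{1}{2}\lh(\psi(g))\geq \tfrac{1}{2}\delta_g$, so we may take $\tfrac{1}{2}\delta_g$ as the new constants. Hence $\Gamma$ is both $K$-sofic and projectively $K$-sofic. The only nontrivial step is the eigenspace computation yielding $\lj(P_\pi)=\lr(P_\pi)$; everything else is a direct transport along the homomorphism $\iota$.
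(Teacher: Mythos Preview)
Your argument is correct and is exactly the natural proof via the permutation representation $S_n\hookrightarrow\GL_n(K)$; the paper itself omits the proof, merely stating that it ``is not hard to prove'' and pointing to \cite{arzhantsevapaunescu13}, Theorem~1.3. Your per-cycle analysis giving $\dim\ker(1-P_\pi)=c$ and $\dim\ker(\alpha-P_\pi)\leq c$ for all $\alpha\in K^\times$ (hence $\lj(P_\pi)=\lr(P_\pi)$) is valid in every characteristic, so the transport of \ref{def:ApprPropOne}--\ref{def:ApprPropMultiplication} along $\iota$ goes through as written.
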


Combining Proposition~\ref{prop:PrimeSoficForInfinitelyManyPrimes} and Proposition~\ref{prop:SoficImpliesLinearlySofic}, we see that the class of sofic groups provides many examples of groups that are simultaneously prime sofic and zero sofic.

\begin{lemma}\label{lem:DirectProductsOfProjectivelyLinearlySoficGroups}
 Let $\Gamma_1$ and $\Gamma_2$ be groups, $E_i\subset \Gamma_i$ finite, and $\phi\colon \Gamma_1\rightarrow \GL_n(K)$ and $\psi\colon \Gamma_2\rightarrow \GL_m(K)$ be $(E_i,\frac{1}{2}\epsilon)$-homomorphisms with respect to the Jordan length for $i=1,2$. Then
\[\zeta_{(g,h)}\coloneqq \phi_g\otimes \psi_{h}\in \GL_{nm}(K)\]
defines an $(E_1\times E_2,\epsilon)$-homomorphism on $\Gamma_1\times \Gamma_2$.
\end{lemma}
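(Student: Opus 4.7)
The plan is to reduce multiplicativity of $\zeta$ to the mixed-product identity $(A\otimes B)(C\otimes D)=(AC)\otimes (BD)$ and then quote Lemma~\ref{lem:RankLengthAndJordanLengthForDirectSumsAndTensorProducts}\ref{enum:JordanLengthOfTensorProduct}. By Proposition~\ref{prop:RelaxedAlmostHomomorphisms} I may assume from the outset that $\phi_1=1\in\GL_n(K)$ and $\psi_1=1\in\GL_m(K)$, which makes $\zeta_{(1,1)}=1\in\GL_{nm}(K)$ and so \ref{def:ApprPropOne} is automatic.

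For \ref{def:ApprPropMultiplication}, I would compute, for $(g_1,h_1),(g_2,h_2)\in E_1\times E_2$,
\[\zeta_{(g_1,h_1)}\zeta_{(g_2,h_2)}\zeta_{(g_1g_2,h_1h_2)}^{-1}=\bigl(\phi_{g_1}\phi_{g_2}\phi_{g_1g_2}^{-1}\bigr)\otimes\bigl(\psi_{h_1}\psi_{h_2}\psi_{h_1h_2}^{-1}\bigr),\]
using also $(A\otimes B)^{-1}=A^{-1}\otimes B^{-1}$. Both tensor factors have Jordan length at most $\tfrac12\epsilon$ by hypothesis, so Lemma~\ref{lem:RankLengthAndJordanLengthForDirectSumsAndTensorProducts}\ref{enum:JordanLengthOfTensorProduct} (applied with $\alpha=1$) bounds the $\lj$ of the above product by $\epsilon$.

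The substantive point is condition \ref{def:ApprPropSingleElement}: I need a strictly positive $\delta_{(g,h)}$ bounding $\lj(\zeta_{(g,h)})$ from below on $(E_1\times E_2)\setminus\{(1,1)\}$. If exactly one component is trivial, say $h=1$, then $\zeta_{(g,1)}=\phi_g\otimes 1$ is, up to a reordering of basis vectors, a block-diagonal sum of $m$ copies of $\phi_g$; invariance of $\lj$ under this operation gives $\lj(\zeta_{(g,1)})=\lj(\phi_g)$, which is bounded below by the $\delta_g$ inherited from $\phi$. If $g\neq 1$ and $h\neq 1$, Proposition~\ref{prop:LengthFunctionsAndIotaValuesEstimates} forces $\iota(\phi_g),\iota(\psi_h)<1$, and combining Lemma~\ref{lem:OneByOneJordanBlocksOfTensorProducts} with the other half of Proposition~\ref{prop:LengthFunctionsAndIotaValuesEstimates} yields
\[\lj(\phi_g\otimes\psi_h)\geq\tfrac12\bigl(1-\iota(\phi_g)\iota(\psi_h)-(1-\iota(\phi_g))(1-\iota(\psi_h))\bigr)>0,\]
and I would collect these strictly positive numbers into the required family $\delta_{(g,h)}$.

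The main obstacle is \ref{def:ApprPropSingleElement}, not multiplicativity: one must check that tensoring two matrices whose Jordan structure is somewhat close to that of the identity still leaves the lower bound positive. The escape is exactly Lemma~\ref{lem:OneByOneJordanBlocksOfTensorProducts}, whose estimate on $\iota$ of a tensor product keeps us strictly below~$1$ as long as neither factor has $\iota$ equal to~$1$; everything else is bookkeeping with the Kronecker product.
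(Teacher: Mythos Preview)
Your treatment of \ref{def:ApprPropMultiplication} and of the case where one coordinate is trivial is correct and matches the paper. The gap is in your handling of \ref{def:ApprPropSingleElement} when both $g\neq 1$ and $h\neq 1$. Your displayed lower bound expands to
\[
\tfrac12\bigl(\iota(\phi_g)(1-\iota(\psi_h))+\iota(\psi_h)(1-\iota(\phi_g))\bigr),
\]
and this vanishes whenever $\iota(\phi_g)=\iota(\psi_h)=0$, which is entirely possible (take $\phi_g$ and $\psi_h$ each conjugate to a single Jordan block of size at least $2$). The hypothesis $\lj(\phi_g)\geq\delta_g>0$ only forces $\iota(\phi_g)\leq 1-\delta_g<1$; it gives no lower bound on $\iota(\phi_g)$. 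So your inequality is not strictly positive in general, and even where it is positive it is not bounded below by a quantity depending only on $\delta_g,\delta_h$, so you cannot ``collect'' these numbers into a family $\delta_{(g,h)}$.

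The missing ingredient is precisely the second clause of Lemma~\ref{lem:OneByOneJordanBlocksOfTensorProducts}. The paper splits into two cases. If at least one of $\iota(\phi_g),\iota(\psi_h)$ exceeds $\tfrac12$, your estimate does work: the summand with the large $\iota$ contributes at least $\tfrac14(1-\iota(\text{other}))\geq\tfrac14\lj(\text{other})$, hence $\lj(\zeta_{(g,h)})\geq\tfrac14\min(\delta_g,\delta_h)$. If both $\iota(\phi_g),\iota(\psi_h)\leq\tfrac12$, the first clause of Lemma~\ref{lem:OneByOneJordanBlocksOfTensorProducts} is useless, but the second clause gives $\iota(\phi_g\otimes\psi_h)\leq\tfrac12$ directly, whence $\lj(\zeta_{(g,h)})\geq\tfrac14$ by Proposition~\ref{prop:LengthFunctionsAndIotaValuesEstimates}. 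Either way one obtains a bound $\geq\tfrac14\min(\delta_g,\delta_h,1)$, which is the uniform $\delta_{(g,h)}$ you need.
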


\begin{proof}
 Assume $\lj(\phi_g)\geq \delta >0$ and $\lj(\psi_h)\geq \delta$ for all $g\in E_1$, $h\in E_2$. Note that \ref{enum:JordanLengthOfDirectSum} in Lemma~\ref{lem:RankLengthAndJordanLengthForDirectSumsAndTensorProducts} is in the general form a very weak estimate, compared to \ref{enum:RankLengthOfDirectSum}. This is the reason why we are working with tensor products. By Lemma~\ref{lem:RankLengthAndJordanLengthForDirectSumsAndTensorProducts} for $g,g'\in E_1$ and $h,h'\in E_2$
\begin{align*}
&\lj(\zeta_{(g,h)}\zeta_{(g',h')}\zeta_{(g,h)(g',h')}^{-1})\\
&\quad\quad =\lj(\phi_g\phi_{g'}\phi_{gg'}^{-1}\otimes \psi_h\psi_{h'}\psi_{hh'}^{-1})\\
&\quad\quad \leq \lj(\phi_g\phi_{g'}\phi_{gg'}^{-1})+\lj(\psi_h\psi_{h'}\psi_{hh'}^{-1})\\
&\quad\quad \leq \tfrac{1}{2}\epsilon+\tfrac{1}{2}\epsilon=\epsilon.
\end{align*}
Assume one of $\iota(\phi_g)$ and $\iota(\psi_h)$ is larger than $\frac{1}{2}$. Then we use Proposition~\ref{prop:LengthFunctionsAndIotaValuesEstimates} and Lemma~\ref{lem:OneByOneJordanBlocksOfTensorProducts} to estimate
\begin{align*}
 \lj(\zeta_{(g,h)}) &=\lj(\phi_g\otimes \psi_h)\\
&\geq \tfrac{1}{2}(1-\iota(\phi_g\otimes \psi_h))\\
&\geq \tfrac{1}{2}(1-\iota(\phi_g)\iota(\psi_h)-(1-\iota(\phi_g))(1-\iota(\psi_h)))\\
&= \tfrac{1}{2}\iota(\phi_g)(1-\iota(\psi_h))+\tfrac{1}{2}\iota(\psi_h)(1-\iota(\phi_g)).
\end{align*}
Hence $\lj(\zeta_{(g,h)})\geq \tfrac{1}{4}\lj(\phi_g)$ or $\lj(\zeta_{(g,h)})\geq \tfrac{1}{4}\lj(\psi_h)$, which is large enough.
 If $\iota(\phi_g),\iota(\psi_h)\leq \frac{1}{2}$, then, also by 
 Lemma~\ref{lem:OneByOneJordanBlocksOfTensorProducts}, 
\[\lj(\zeta_{(g,h)}) \geq \tfrac{1}{2}(1-\iota(\phi_g\otimes \psi_h))\geq \tfrac{1}{4}.\]
Thus $\zeta$ is an $(E_1\times E_2,\epsilon)$-homomorphism.
\end{proof}

\begin{theorem}\label{thm:EquivalenceOfProjectivelyLinearlyAndLinearlySofic}
A group $\Gamma$ is $K$-sofic if and only if it is projectively $K$-sofic.
\end{theorem}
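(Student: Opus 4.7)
The plan is to prove the two implications separately, exploiting the fact that $\lj$ is blind to scalar matrices while $\lr$ is not.

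For the forward direction ($K$-sofic implies projectively $K$-sofic), given an $(E,\tfrac{1}{2}\epsilon)$-homomorphism $\phi\colon \Gamma\to \GL_n(K)$ for $\lr$ with $\lr(\phi(g))\geq \delta_g$, I will pass to
\[\phi'(g) := \phi(g)\oplus 1_n \in \GL_{2n}(K).\]
The only obstruction to $\lj$ being bounded below would be $\phi(g)$ lying close to a non-identity scalar $\alpha\neq 1$; the summand $1_n$ neutralises this, since $\rank(1-\alpha 1_n)=n$ forces $\lr(\alpha\phi'(g))\geq \tfrac12$ for every $\alpha\neq 1$, whereas at $\alpha=1$ one has $\lr(\phi'(g))=\tfrac12 \lr(\phi(g))$. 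Hence $\lj(\phi'(g))=\tfrac12\lr(\phi(g))\geq \tfrac12\delta_g$, while the multiplicative error is immediate from $\lj\leq \lr$ together with Lemma~\ref{lem:RankLengthAndJordanLengthForDirectSumsAndTensorProducts}\ref{enum:RankLengthOfDirectSum}.

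For the converse, given $\phi$ with $\lj(\phi(g))\geq \delta_g$ and $\lj$-multiplicative error at most $\epsilon$, I will use the adjoint representation
\[\psi(g) := \phi(g)\otimes (\phi(g)^T)^{-1} \in \GL_{n^2}(K),\]
which is a genuine group homomorphism on $\GL_n(K)$ and, crucially, is invariant under scaling $\phi(g)$ by any $\alpha\in K^\times$ (matching the scalar invariance of $\lj$). A short calculation yields $\psi(g)\psi(h)\psi(gh)^{-1}=u\otimes (u^T)^{-1}$ with $u:=\phi(g)\phi(h)\phi(gh)^{-1}$; scale invariance lets us replace $u$ by $\alpha u$, and Lemma~\ref{lem:RankLengthAndJordanLengthForDirectSumsAndTensorProducts}\ref{enum:RankLengthOfTensorProduct} then gives $\lr(u\otimes (u^T)^{-1})\leq 2\lr(\alpha u)$ for every $\alpha$, so passing to the infimum yields $\leq 2\lj(u)\leq 2\epsilon$.

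The main obstacle is the $\lr$-lower bound on $\psi(g)$. Under the identification $K^n\otimes K^n\cong M_n(K)$, $\psi(g)$ acts as conjugation by $\phi(g)$, so $\ker(1-\psi(g))$ equals the centralizer $C(\phi(g))\subseteq M_n(K)$, and the task reduces to showing
\[\dim C(A)\leq n\cdot \max_{\beta\in \overline{K}^\times} k_\beta(A),\]
where $k_\beta(A)$ counts the Jordan blocks of $A$ with eigenvalue $\beta$. Working eigenvalue-by-eigenvalue over $\overline{K}$, the classical identity $\dim C(A|_{V_\beta})=\sum_j \mu_j^2$, where $\mu$ is the conjugate of the Jordan-type partition (so $\mu_1=k_\beta(A)$), combined with $\sum_j \mu_j^2\leq \mu_1\sum_j \mu_j=k_\beta(A)\cdot n_\beta$, gives the bound after summing. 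A Galois/inseparability argument (invoking Lemma~\ref{lem:SizeOfJordanBlockWithInseparableEigenvalue}) then handles the case when the maximum is attained outside $K^\times$: every $\beta\in \overline{K}^\times\setminus K^\times$ has all $d\geq 2$ of its (Galois or inseparable) conjugates present in the spectrum of $A$, so $k_\beta\leq n/2$. Combining the two cases yields $\lr(\psi(g))\geq \min(\lj(\phi(g)),\tfrac12)\geq \min(\delta_g,\tfrac12)>0$, which is exactly what is needed.
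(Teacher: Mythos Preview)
Your forward direction is identical to the paper's: both pass to $\phi(g)\oplus 1_n$ and use that $\lr\leq\tfrac12$ forces $\lj=\lr$.

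For the converse you use the same construction $\psi(g)=\phi(g)\otimes\phi(g)^{-\mathrm T}$ as the paper, but your analysis is genuinely different. The paper treats $g\mapsto\phi_g^{-\mathrm T}$ as a second almost homomorphism, embeds $\Gamma$ diagonally in $\Gamma\times\Gamma$, and invokes Lemma~\ref{lem:DirectProductsOfProjectivelyLinearlySoficGroups} (hence the whole $\iota$-machinery of Lemmas~\ref{lem:OneByOneJordanBlocksOfTensorProducts} and Proposition~\ref{prop:LengthFunctionsAndIotaValuesEstimates}) to obtain a $\lj$-lower bound of order $\tfrac14\lj(\phi_g)$, and then uses the ``prevalent eigenvalue $1$'' argument to transfer the multiplicative $\lj$-error to $\lr$. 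You instead exploit the scale invariance $(\alpha u)\otimes(\alpha u)^{-\mathrm T}=u\otimes u^{-\mathrm T}$ together with Lemma~\ref{lem:RankLengthAndJordanLengthForDirectSumsAndTensorProducts}\ref{enum:RankLengthOfTensorProduct} to bound the $\lr$-error directly by $2\lj(u)$, and for the lower bound you identify $\psi(g)$ with conjugation on $M_n(K)$ and use the classical centralizer-dimension formula $\dim C(A)=\sum_\beta\sum_j\mu_j(\beta)^2\leq n\max_\beta k_\beta(A)$. This is more elementary (it bypasses Theorem~\ref{thm:FewJordanBlocksInTensorProductMatrices} and Lemma~\ref{lem:OneByOneJordanBlocksOfTensorProducts} entirely) and even yields the sharper bound $\lr(\psi(g))\geq\min(\lj(\phi_g),\tfrac12)$ rather than $\tfrac14\lj(\phi_g)$.

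One small point: your treatment of the case $\beta\in\overline{K}^\times\setminus K^\times$ is slightly loose. In the purely inseparable case $\beta$ has no distinct Galois conjugates, so the phrase ``all $d\geq2$ conjugates present'' does not literally apply, and Lemma~\ref{lem:SizeOfJordanBlockWithInseparableEigenvalue} only controls the size of the \emph{largest} block. The clean fix is to note that each Jordan block for $\beta$ comes from an invariant factor of $A$ divisible by the minimal polynomial of $\beta$ over $K$, which has degree $[K(\beta):K]\geq 2$; hence $n\geq 2k_\beta$ for every $\beta\notin K$. With this adjustment your argument is complete.
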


\begin{proof}
Let $\Gamma$ be a $K$-sofic group and $\phi\colon \Gamma\rightarrow \GL_n(K)$ an $(E,\epsilon)$-homomorphism with respect to the rank length, where we may assume $\epsilon<\frac{1}{2}$. Let 
\[\psi_g\coloneqq \phi_g\oplus \id_n\in \GL_{2n}(K).\]
 Then for every $g\in \Gamma $ evidently $\lr(\psi_g)\leq \frac{1}{2}$. By Corollary~2.14 in \cite{stolzthom13} $\lj(\psi_g)=\lr(\psi_g)$ follows. Hence $\lj(\psi_g)> \frac{\delta}{2}$ is true for all $g\in E$. Also, since $\epsilon<\frac{1}{2}$, by Corollary~2.14, \cite{stolzthom13} we have 
\[\lj(\phi_g\phi_h\phi_{gh}^{-1})=\lr(\phi_g\phi_h\phi_{gh}^{-1})\]
 and consequently
\[\lj(\psi_g\psi_h\psi_{gh}^{-1})<\tfrac{1}{2}\epsilon,\]
whenever $g,h\in E$. Thus $\Gamma$ is projectively $K$-sofic.

Suppose conversely that $\Gamma$ is projectively $K$-sofic. Let $A^{\rm T}$ denote the transpose of a matrix $A$, and $A^{-\rm T}\coloneqq (A^{\rm T})^{-1}$. Then it is easily seen that $\lr(A^{-\rm T})=\lr(A)$ and $\lj(A^{-\rm T})=\lj(A)$.
We choose an $(E,\frac{1}{2}\epsilon)$-homomorphism $\phi$, where $\epsilon<\frac{1}{4}$ and define 
\[\psi_g\coloneqq \phi_g\otimes \phi_g^{-\rm T}.\]
If $\alpha$ is an eigenvalue of $\phi_g$, then clearly $\alpha^{-1}$ is an eigenvalue of $\phi_g^{-\rm T}$. Therefore $g\mapsto \phi_g^{-\rm T}$ defines an $(E,\frac{1}{2}\epsilon)$-homomorphism. Then by embedding $\Gamma$ diagonally into $\Gamma\times \Gamma$ Lemma~\ref{lem:DirectProductsOfProjectivelyLinearlySoficGroups} shows that $\psi$ is an $(E,\epsilon)$-homomorphism with respect to the Jordan length. By the choice of $\epsilon$, the prevalent eigenvalue of $\psi_g\psi_h\psi_{gh}^{-1}$ is $1$ for all $g,h\in E$. Therefore we have 
\[\lr(\psi_g\psi_h\psi_{gh}^{-1})=\lj(\psi_g\psi_h\psi_{gh}^{-1})\leq \epsilon.\]
 Moreover $\lr(\psi_g)\geq \lj(\psi_g)$ for $g\in E$. This shows that $\Gamma$ is $K$-sofic.
\end{proof}

Theorem~\ref{thm:EquivalenceOfProjectivelyLinearlyAndLinearlySofic} will be of great use to reduce problems concerning projectively linearly sofic groups to linearly sofic groups. For example the proof of Theorem~\ref{thm:AmplificationInMatrixGroups} could be done for linearly sofic groups and the statement for projectively sofic groups derived from Theorem~\ref{thm:EquivalenceOfProjectivelyLinearlyAndLinearlySofic}. Of course this kind of reduction also works the other way round, but usually linearly sofic groups are a bit easier to handle.

The following proposition asserts that linear soficity is preserved when passing to larger fields. It is a good example of an application of Theorem~\ref{thm:EquivalenceOfProjectivelyLinearlyAndLinearlySofic}. The converse statement is more complicated and will be explained afterwards.

\begin{proposition}\label{prop:ApproximationWithMatricesInLargerFields}
Let $K$ be a field. If $\Gamma$ is $K$-sofic, then it is $L$-sofic for any field $L$ containing $K$.
\end{proposition}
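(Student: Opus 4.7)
The plan is to rely on the fact that the rank of a matrix with entries in $K$ is unchanged when one views it as a matrix over any extension $L\supset K$. This is because the rank can be characterized by the vanishing of minors, a polynomial condition on the entries of the matrix, so extension of scalars neither increases nor decreases it. Consequently the natural inclusion $\iota\colon \GL_n(K)\hookrightarrow \GL_n(L)$ is an isometric embedding with respect to the rank length $\lr$.

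Given this observation, the argument is a one-line application of the definition. I would fix the constants $\delta_g>0$, $g\in\Gamma\setminus\{1\}$, witnessing $K$-soficity of $\Gamma$. For any finite $E\subset\Gamma$ and any $\epsilon>0$, $K$-soficity supplies an $(E,\epsilon)$-homomorphism $\phi\colon\Gamma\to\GL_n(K)$ with $\lr(\phi(g))\geq \delta_g$ for every $g\in E\setminus\{1\}$ and $\lr(\phi(g)\phi(h)\phi(gh)^{-1})\leq \epsilon$ for all $g,h\in E$. Post-composing with $\iota$ yields $\iota\circ\phi\colon\Gamma\to\GL_n(L)$, an $(E,\epsilon)$-homomorphism into $\GL_n(L)$ satisfying the same bounds with the same constants. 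Hence $\Gamma$ is $L$-sofic.

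There is essentially no obstacle here; the only point that deserves verification is the field-independence of rank, which is a standard linear-algebra fact. Strictly speaking, Theorem~\ref{thm:EquivalenceOfProjectivelyLinearlyAndLinearlySofic} is not needed for this direction: the rank length transfers transparently, whereas a direct argument in terms of the Jordan length would have to contend with the a priori inequality $\lj^L(A)\leq \lj^K(A)$, which prevents a naive transfer of the lower bounds $\lj(\phi(g))\geq \delta_g$. Invoking the equivalence of linear and projective soficity is precisely what lets one sidestep this subtlety, should one prefer to reason on the projective side. The converse direction---descending from $L$-soficity to $K$-soficity---is genuinely harder, since there is no obvious way to manufacture $K$-valued approximations from $L$-valued ones, which is why the author defers it.
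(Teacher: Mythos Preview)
Your proof is correct and follows the same approach as the paper: the paper's entire argument is the single sentence ``The claim for linearly sofic group follows directly from the definition of the rank length,'' and you have simply unpacked what that means. Your remarks about why the Jordan length does not transfer as cleanly, and why Theorem~\ref{thm:EquivalenceOfProjectivelyLinearlyAndLinearlySofic} is what makes this proposition serve the projective case as well, are accurate and match the paper's framing of the result as ``a good example of an application'' of that theorem.
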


\begin{proof}
The claim for linearly sofic group follows directly from the definition of the rank length.
\end{proof}

The proof of the following lemma is obtained by means of linear algebra.

\begin{lemma}\label{lem:MatrixActionOnFieldExtensions}
 Let $L/K$ be a finite field extension. Then matrices $A\in \GL_n(L)$ act as invertible linear transformations $A'$ on $K^{n\cdot [L\colon K]}$ and 
\[\dim_K\ker(1-A')= [L\colon K]\cdot \dim_L\ker(1-A)\]
holds for all $A\in\GL_n(L)$.
\end{lemma}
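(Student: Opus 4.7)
The plan is entirely linear-algebraic and essentially amounts to tracking two ways of viewing the same object. Fix a basis $e_1,\ldots,e_d$ of $L$ as a $K$-vector space, where $d=[L\colon K]$. This choice provides a $K$-linear isomorphism
\[\Phi\colon L^n \longrightarrow K^{nd},\qquad (x_1,\ldots,x_n)\mapsto (a_{1,1},\ldots,a_{1,d},a_{2,1},\ldots,a_{n,d}),\]
where $x_j=\sum_{k=1}^{d}a_{j,k}e_k$ with $a_{j,k}\in K$.

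Next I would observe that any $A\in\GL_n(L)$ is in particular a $K$-linear automorphism of $L^n$, so the conjugate $A'\coloneqq \Phi\circ A\circ \Phi^{-1}$ is a well-defined element of $\GL_{nd}(K)$. This gives the first assertion. Applying the same conjugation to $1-A$ yields $1-A'=\Phi\circ(1-A)\circ\Phi^{-1}$, so $\Phi$ restricts to a $K$-linear bijection between $\ker_L(1-A)\subseteq L^n$ (viewed as a $K$-subspace) and $\ker_K(1-A')\subseteq K^{nd}$. In particular these two $K$-vector spaces have the same $K$-dimension.

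The remaining step is the standard fact that for any $L$-subspace $V\subseteq L^n$ one has $\dim_K V=[L\colon K]\cdot \dim_L V$: if $v_1,\ldots,v_r$ is an $L$-basis of $V$, then the family $\{e_k v_j\mid 1\leq k\leq d,\ 1\leq j\leq r\}$ is a $K$-basis of $V$. Applying this to $V=\ker_L(1-A)$ and combining with the bijection established above yields
\[\dim_K\ker(1-A')=\dim_K\ker_L(1-A)=[L\colon K]\cdot\dim_L\ker(1-A),\]
which is the desired equality. There is no real obstacle here; the only thing to be careful about is distinguishing the $L$-module structure on $\ker_L(1-A)$ from its $K$-module structure, since the latter is what is transported by $\Phi$ to $\ker_K(1-A')$.
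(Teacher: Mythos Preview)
Your argument is correct and is precisely the elementary linear-algebra verification the paper has in mind; indeed the paper does not spell out a proof at all, merely stating that the lemma ``is obtained by means of linear algebra.'' Your write-up fills in exactly the expected details (restriction of scalars via a chosen $K$-basis of $L$, transport of the kernel under $\Phi$, and the standard formula $\dim_K V=[L\colon K]\cdot\dim_L V$), so there is nothing to compare or correct.
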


\begin{theorem}\label{thm:MatrixApproximationOnAlgebraicFieldExtensions}
Let $L/K$ be an algebraic field extension and suppose $\Gamma$ is $L$-sofic. Then $\Gamma$ is $K$-sofic.
\end{theorem}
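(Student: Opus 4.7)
The plan is to reduce to a finite subextension and then use the regular representation to convert matrices over the bigger field into matrices over $K$ with the rank length preserved. Start with $E\subset \Gamma$ finite and $\epsilon>0$, and pick an $(E,\epsilon)$-homomorphism $\phi\colon \Gamma\rightarrow \GL_n(L)$ with respect to $\lr$, which exists because $\Gamma$ is $L$-sofic.

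Only finitely many entries of $L$ actually occur in the matrices $\phi(g)$ for $g\in E\cup E^2$, together with the inverses $\phi(g)^{-1}$ for $g\in E\cup E^2$. Since $L/K$ is algebraic, the subfield $M\coloneqq K(\text{all such entries})\subseteq L$ is a finite extension of $K$. Define $\widetilde{\phi}\colon \Gamma\rightarrow \GL_n(M)$ by $\widetilde{\phi}(g)\coloneqq \phi(g)$ for $g\in E\cup E^2$ and $\widetilde{\phi}(g)\coloneqq 1$ otherwise. Because the rank of a matrix does not change under a field extension, $\widetilde{\phi}$ is still an $(E,\epsilon)$-homomorphism with respect to the rank length on $\GL_n(M)$.

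Now use Lemma~\ref{lem:MatrixActionOnFieldExtensions}: each $A\in\GL_n(M)$ acts on $K^{n\cdot [M\colon K]}$ as some $A'\in \GL_{n[M\colon K]}(K)$, and this assignment $\rho\colon \GL_n(M)\rightarrow \GL_{n[M:K]}(K)$ is a group homomorphism (it is simply the regular representation of $M$ over $K$ applied entrywise). From the lemma,
\[\rank_K(1-A')=n[M\colon K]-[M\colon K]\dim_M\ker(1-A)=[M\colon K]\rank_M(1-A),\]
and therefore $\lr(A')=\lr(A)$. Consequently $\psi\coloneqq \rho\circ\widetilde{\phi}\colon \Gamma\rightarrow \GL_{n[M:K]}(K)$ satisfies
\[\lr\bigl(\psi(g)\psi(h)\psi(gh)^{-1}\bigr)=\lr\bigl(\widetilde{\phi}(g)\widetilde{\phi}(h)\widetilde{\phi}(gh)^{-1}\bigr)\leq \epsilon\]
for all $g,h\in E$, and $\lr(\psi(g))=\lr(\widetilde{\phi}(g))\geq \delta_g$ for all $g\in E\setminus\{1\}$. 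Hence $\psi$ is an $(E,\epsilon)$-homomorphism into $\GL_{n[M:K]}(K)$, proving that $\Gamma$ is $K$-sofic.

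The only non-obvious point is the isometry $\lr(A')=\lr(A)$ under the regular representation, but this is exactly what Lemma~\ref{lem:MatrixActionOnFieldExtensions} delivers; the rest is bookkeeping (cutting $L$ down to a finite subextension so that the regular representation has finite degree, and checking that the rank of a matrix is insensitive to passing between $M$ and $L$). No appeal to Theorem~\ref{thm:EquivalenceOfProjectivelyLinearlyAndLinearlySofic} is needed since the argument stays within the rank length throughout.
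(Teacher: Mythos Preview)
Your proof is correct and follows essentially the same route as the paper: reduce to a finite subextension $M/K$ generated by the finitely many matrix entries that occur, then apply Lemma~\ref{lem:MatrixActionOnFieldExtensions} (the regular representation) to pass from $\GL_n(M)$ to $\GL_{n[M:K]}(K)$ while preserving the rank length. The only cosmetic difference is that you explicitly adjoin the entries of the inverses, which is harmless but redundant since Cramer's rule already places them in $M$.
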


\begin{proof}
Suppose $\Gamma$ is $L$-sofic. We consider an $(E,\epsilon)$-homomorphism $\phi\colon \Gamma\rightarrow \GL_n(L)$. Let $M$ be the set of all entries of matrices in $\phi(E^2)$. Then $K(M)$ is a subfield of $L$ and an algebraic extension of $K$ of finite degree $m$. By Lemma~\ref{lem:MatrixActionOnFieldExtensions} the elements of $\phi(E^2)$ act on $K^{nm}$ as linear transformations and their rank length remains unchanged. Thus we obtain an $(E,\epsilon)$-homomorphism $\phi\colon \Gamma\rightarrow \GL_{nm}(K)$, if we use this action for elements in $\phi(E^2)$ and some arbitrary extension to $\Gamma\setminus E^2$. Hence $\Gamma$ is $K$-sofic.
\end{proof}

Let $L/K$ be a field extension. Let $x=(x_1,\ldots,x_m)$ be a vector in $L^m$. We say that $x'=(x'_1,\ldots ,x'_m)$ is a \deph{specialization} of $x$ over $K$ if every polynomial $f\in K[X_1,\ldots ,X_m]$ vanishing at $x$ does also vanish at $x'$. Moreover $x'$ is an \deph{algebraic} specialization if $K(x')/K$ is an algebraic extension.

The next theorem is also generalizes results in \cite{arzhantsevapaunescu13} but was obtained independently.

\begin{theorem}\label{thm:MatrixApproximationOnFieldExtensions}
Let $L/K$ be a field extension and suppose $\Gamma$ is $L$-sofic. Then $\Gamma$ is $K$-sofic.
\end{theorem}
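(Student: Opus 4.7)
The plan is to reduce to the algebraic case handled by Theorem~\ref{thm:MatrixApproximationOnAlgebraicFieldExtensions} via a specialization argument. Starting from an $(E,\epsilon)$-homomorphism $\phi\colon \Gamma\to\GL_n(L)$ with respect to the rank length, I would collect the entries of the matrices $\phi(g)$ and $\phi(g)^{-1}$ for $g\in E^2$ into a tuple $x=(x_1,\ldots,x_m)\in L^m$, so that all matrices relevant to the definition of an almost homomorphism have entries in the $K$-algebra $K[x]\subset L$. The goal is to exhibit an algebraic specialization $x'\in(K')^m$ of $x$ over $K$, with $K'/K$ algebraic, such that the matrices obtained by substitution still define an $(E,\epsilon)$-homomorphism $\phi'\colon \Gamma\to\GL_n(K')$.

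Next I would encode the defining conditions of an $(E,\epsilon)$-homomorphism as polynomial (non-)vanishing conditions on $x$ over $K$. The upper bounds $\lr(\phi(g)\phi(h)\phi(gh)^{-1})\leq \epsilon$ translate into the vanishing at $x$ of all minors of size exceeding $\lfloor\epsilon n\rfloor$ of the matrix $\phi(g)\phi(h)\phi(gh)^{-1}-1$, and these are automatically preserved by any specialization, by the very definition of the term. The invertibility of each $\phi(g)$ and the lower bounds $\lr(\phi(g))\geq \delta_g$ for $g\in E$ are witnessed by finitely many polynomials $F_1,\ldots,F_s\in K[X_1,\ldots,X_m]$ (determinants, resp.\ suitable minors of $1-\phi(g)$) that are non-zero at $x$; these are the conditions that need to be preserved under specialization.

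For the existence of the required specialization I would invoke Hilbert's Nullstellensatz. Let $\mathfrak p$ denote the kernel of the evaluation map $K[X_1,\ldots,X_m]\to L$, $X_i\mapsto x_i$, and set $F\coloneqq F_1\cdots F_s$, which lies outside $\mathfrak p$. Then $(K[X]/\mathfrak p)_F$ is a non-zero finitely generated $K$-algebra, so it admits a maximal ideal; the quotient $K'$ is a finite algebraic extension of $K$, and the images $x'_1,\ldots,x'_m$ of $X_1,\ldots,X_m$ form an algebraic specialization of $x$ over $K$ satisfying $F_j(x')\neq 0$ for all $j$. Substituting $x'$ for $x$ in the matrix expressions for $\phi(g)$, $g\in E^2$, and extending arbitrarily to the rest of $\Gamma$, yields an $(E,\epsilon)$-homomorphism $\phi'\colon \Gamma\to\GL_n(K')$. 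Since $K'/K$ is algebraic, applying Theorem~\ref{thm:MatrixApproximationOnAlgebraicFieldExtensions} to $\phi'$ finishes the proof.

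The hard part is the bookkeeping for the rank conditions: verifying that each constraint in the definition of an $(E,\epsilon)$-homomorphism is either a purely closed or a purely open condition on the entries, so that the specialization language applies cleanly. The conceptual engine, however, is the Nullstellensatz step, which produces the algebraic point on the constructible locus carved out by the finitely many polynomial conditions and thereby bridges the transcendence gap between $L$ and $\overline{K}$.
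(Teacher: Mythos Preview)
Your proposal is correct and follows essentially the same specialization strategy as the paper: encode the rank constraints as polynomial (non-)vanishing conditions on the matrix entries, pass to an algebraic specialization over $K$, and then invoke Theorem~\ref{thm:MatrixApproximationOnAlgebraicFieldExtensions}. The only difference is packaging: the paper converts the non-vanishing conditions into vanishing ones via the Rabinowitsch trick (adjoining auxiliary coordinates $\alpha_g$ with $\alpha_g\det(a_g)=1$) and then cites Lang's existence theorem for algebraic specializations, whereas you localize at $F=\prod F_j$ and use the weak Nullstellensatz directly to produce the closed point---your inclusion of the entries of $\phi(g)^{-1}$ for $g\in E^2$ is in fact slightly cleaner, since it makes invertibility of the specialized matrices automatic.
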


\begin{proof}
We consider an $(E,\epsilon)$-homomorphism $\phi\colon \Gamma\rightarrow \GL_n(L)$. Let $X$ be the set of all entries of matrices in $\phi(E^2)$. For every $g\in E$ let $a_g$ be a maximal $k_g\times k_g$-submatrix of $1-\phi_g$ such that $\det(a_g)\neq 0$, i.e. $k_g=\rank(1-\phi_g)$. Then there is $\alpha_g\in L$ such that $\alpha_g\det(a_g)-1=0$. Let $Y$ be the set of all $\alpha_g$ for $g\in E$. We order the elements of $X\cup Y$ as a vector $x$. Then by Theorem~7 in \cite{lang73}, Chapter~II, there exists an algebraic specialization $x'$ of $x$.
This implies that $K(x')$ is an algebraic extension of $K$. We write $\psi_g$ for the matrix in $K(x')$ obtained by replacing elements in $x$ with appropriate elements in $x'$, and $a'_g$ for the submatrix of $1-\psi_g$ corresponding to $a_g$. Then every submatrix of $1-\psi_g$ larger than $a'_g$ has zero determinant, and $\alpha'_g\det(a'_g)-1=0$, since the determinant is a polynomial in matrix entries. Therefore $\det(a'_g)\neq 0$, $\rank(1-\psi_g)=k_g$ and consequently $\lr(\psi_g)=\lr(\phi_g)$. By the same reasoning $\lr(\psi_g\psi_h\psi_{gh}^{-1})\leq \epsilon$. If we define $\psi_g$ arbitrary for $g\notin E^2$, then $\psi$ is an $(E,\epsilon)$-homomorphism into $\GL_n(K(x'))$. We have thus shown that $\Gamma$ is $K(x')$-sofic, and since $K(x')/K$ is algebraic, by Theorem~\ref{thm:MatrixApproximationOnAlgebraicFieldExtensions} $\Gamma$ is $K$-sofic.
\end{proof}

\begin{corollary}
Let $q=p^k$, where $p$ is a prime power. Then every $q$-sofic group is $p$-sofic.
\end{corollary}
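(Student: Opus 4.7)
The plan is to apply Theorem~\ref{thm:MatrixApproximationOnAlgebraicFieldExtensions} directly. The finite field $\mathbb{F}_q$ with $q=p^k$ is a degree-$k$ (hence algebraic) extension of the prime field $\mathbb{F}_p$. So taking $L=\mathbb{F}_q$ and $K=\mathbb{F}_p$, if $\Gamma$ is $q$-sofic (by definition $L$-sofic), then the theorem immediately yields that $\Gamma$ is $p$-sofic.

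Concretely, starting from an $(E,\epsilon)$-homomorphism $\phi\colon \Gamma \to \GL_n(\mathbb{F}_q)$ with respect to the rank length, one views each matrix $\phi(g)$ as acting on $\mathbb{F}_p^{nk}$ via the identification $\mathbb{F}_q^n \cong \mathbb{F}_p^{nk}$ of Lemma~\ref{lem:MatrixActionOnFieldExtensions}. That lemma guarantees $\dim_{\mathbb{F}_p}\ker(1-\phi(g)')=k\cdot \dim_{\mathbb{F}_q}\ker(1-\phi(g))$, so the rank length is preserved after normalization: $\lr(\phi(g)')=\lr(\phi(g))$. Thus the almost-homomorphism conditions \ref{def:ApprPropSingleElement} and \ref{def:ApprPropMultiplication} transfer verbatim from the $\mathbb{F}_q$-valued picture to the $\mathbb{F}_p$-valued one, and the extension to the rest of $\Gamma$ is done arbitrarily as in the proof of Theorem~\ref{thm:MatrixApproximationOnAlgebraicFieldExtensions}.

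There is essentially no obstacle; the corollary is just the observation that prime fields form the base of every finite-field tower, and all heavy lifting (the invariance of the rank under restriction of scalars along an algebraic extension) has already been done in Theorem~\ref{thm:MatrixApproximationOnAlgebraicFieldExtensions}. No appeal to the harder Theorem~\ref{thm:MatrixApproximationOnFieldExtensions} (with its specialization argument) is needed, since the extension $\mathbb{F}_q/\mathbb{F}_p$ is already algebraic.
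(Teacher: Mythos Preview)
Your proof is correct and matches the paper's intended argument: the corollary is an immediate consequence of Theorem~\ref{thm:MatrixApproximationOnAlgebraicFieldExtensions} applied to the finite (hence algebraic) extension $\mathbb{F}_q/\mathbb{F}_p$. One small wording issue: since the hypothesis allows $p$ itself to be a prime power, $\mathbb{F}_p$ need not be \emph{the prime field}, though this does not affect the argument.
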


\begin{corollary}\label{cor:FiniteMatrixApproximationImpliesPrimeZeroSofic}
 Let $\Gamma$ have the approximation property in $\GLinG_{\fin}$ with the rank length. Then $\Gamma$ is $p$-sofic for $p$ a prime or $0$.
\end{corollary}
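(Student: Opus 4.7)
The plan is to apply Theorem~\ref{thm:ApproximationPropertyAndMetricUltraproducts} to embed $\Gamma$ into a metric ultraproduct
\[
\mup{\prod_{i\in I}\GL_{n_i}(K_i)}{\mathfrak{u}},
\]
where every $K_i$ is a finite field, of characteristic $p_i$ say. The conclusion will then follow from a dichotomy on how the primes $p_i$ are distributed with respect to $\mathfrak{u}$.

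Suppose first that there is a prime $p$ with $\{i\in I : p_i = p\}\in\mathfrak{u}$. Passing to the restricted ultraproduct over this set (which is canonically identified with the original ultraproduct), one may assume every $K_i$ has characteristic $p$. Each such $K_i$ is then a finite, hence algebraic, extension of $\mathbb{F}_p$, so the argument of Theorem~\ref{thm:MatrixApproximationOnAlgebraicFieldExtensions} (via Lemma~\ref{lem:MatrixActionOnFieldExtensions}) converts any $(E,\varepsilon)$-homomorphism into $\GL_{n_i}(K_i)$ obtained from the ultraproduct embedding into an $(E,\varepsilon)$-homomorphism into $\GL_{n_i[K_i:\mathbb{F}_p]}(\mathbb{F}_p)$. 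Hence $\Gamma$ is $p$-sofic.

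Otherwise no prime $p$ satisfies $\{i : p_i = p\}\in\mathfrak{u}$. Since a finite union of sets outside an ultrafilter is again outside it, for every $N$ the set $\{i : p_i\leq N\}=\bigcup_{p\leq N}\{i : p_i = p\}$ lies outside $\mathfrak{u}$, so $\{i : p_i > N\}\in\mathfrak{u}$. This is exactly $\lim_\mathfrak{u} p_i = \infty$, and so $\Gamma$ is $0$-sofic by the definition given at the start of Section~\ref{sec:GroupApproximationWithMatrices}, since the rank length is in use throughout.

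I expect no substantive obstacle here: the dichotomy on $\mathfrak{u}$ is a routine ultrafilter computation, the reduction to $\mathbb{F}_p$ in the first case is already encapsulated in Theorem~\ref{thm:MatrixApproximationOnAlgebraicFieldExtensions}, and the $0$-sofic case matches its definition essentially verbatim. The only minor point demanding care is the verification that restricting the ultraproduct to an ultrafilter set does not alter it, which is immediate from the construction of the metric ultraproduct.
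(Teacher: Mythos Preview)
Your proof is correct and follows essentially the same route as the paper: embed $\Gamma$ into a metric ultraproduct of $\GL_{n_i}(K_i)$ with $K_i$ finite, split into the case $\lim_{\mathfrak{u}} p_i=\infty$ (giving $0$-sofic by definition) versus the case where some prime $p$ satisfies $p_i=p\alev{\mathfrak{u}}$, and in the latter case reduce each $K_i$ to $\mathbb{F}_p$ via Lemma~\ref{lem:MatrixActionOnFieldExtensions}/Theorem~\ref{thm:MatrixApproximationOnAlgebraicFieldExtensions}. Your explicit justification of the ultrafilter dichotomy via the finite-union argument is slightly more detailed than the paper's, but the substance is identical.
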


\begin{proof}
 We can embed $\Gamma$ into a metric ultraproduct of groups $\GL_{n_i}(K_i)$, where the $K_i$ are finite fields. If $K_i$ has characteristic $p_i$ and $\lim_{\mathfrak{u}}p_i=\infty$, then $\Gamma$ is by definition $0$-sofic. If otherwise $p_i\leq C\alev{\mathfrak{u}}$ for some constant $C$, then, because $\mathfrak{u}$ is an ultrafilter, $p_i=p\alev{\mathfrak{u}}$. By a standard ultraproduct argument $\mup{\prod \GL_{n_i}(K_i)}{\mathfrak{u}}$ is isomorphic to $\mup{\prod_{i\colon \chr(K_i)=p}\GL_{n_i}(K_i)}{\mathfrak{u}}$. By Theorem~\ref{thm:MatrixApproximationOnAlgebraicFieldExtensions} we can replace all $K_i$ of characteristic $p$ with the field $\mathbb{F}_p$ to show that $\Gamma$ is $p$-sofic.
\end{proof}

In \cite{arzhantsevapaunescu13}, Theorem~8.2 it was proved that a $\mathbb{C}$-sofic group is prime sofic or zero sofic. We adopt exactly the same argument to obtain for arbitrary fields:

\begin{theorem}\label{thm:MatrixApproximationOnArbitraryFieldExtensions}
Let $\Gamma$ be linearly sofic. Then $\Gamma$ has the $\GLinG_{\fin}$-approximation property with respect to the rank length.
\end{theorem}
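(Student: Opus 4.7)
The plan is to take an $(E,\epsilon)$-homomorphism $\phi\colon \Gamma \to \GL_n(K)$ into a general linear group over some field $K$ (guaranteed by linear soficity of $\Gamma$) and reduce modulo a maximal ideal of a cleverly chosen finitely generated $\mathbb{Z}$-subalgebra of $K$. The residue field is then finite by the Nullstellensatz for finitely generated $\mathbb{Z}$-algebras, and the reduction will preserve exactly the rank data needed for \ref{def:ApprPropSingleElement} and \ref{def:ApprPropMultiplication}.

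Concretely, I first form the $\mathbb{Z}$-subalgebra $R_0 \subseteq K$ generated by the entries of all $\phi(g)$ with $g \in E^2$; this is a finitely generated $\mathbb{Z}$-algebra. To make $\phi(g)^{-1}$ land in the subring I localize at $\det\phi(g)$ for each $g\in E^2$ (Cramer's rule). Next, imitating the proof of Theorem~\ref{thm:MatrixApproximationOnFieldExtensions}, for each $g \in E$ I fix a nonzero $k_g \times k_g$ minor $a_g$ of $1-\phi(g)$ witnessing $k_g = \rank(1-\phi(g))$, and localize at each $a_g$. Let $R$ denote the resulting ring, which is still a finitely generated $\mathbb{Z}$-algebra. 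No further localization is needed for the defect matrices $\phi(g)\phi(h)\phi(gh)^{-1}$, since \ref{def:ApprPropMultiplication} asks only for an upper bound on their rank length.

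Now invoke the form of the Nullstellensatz stating that every maximal ideal $\mathfrak{m}$ of a finitely generated $\mathbb{Z}$-algebra has finite residue field, and pick any such $\mathfrak{m}$ in $R$, giving a finite field $F = R/\mathfrak{m}$. Define $\bar\phi(g) \in \GL_n(F)$ as the reduction of $\phi(g)$ mod $\mathfrak{m}$; these matrices are invertible because $\det\phi(g)$ is a unit in $R$. Since $a_g$ is a unit in $R$, its image in $F$ is nonzero, so $\rank(1-\bar\phi(g)) \geq k_g$; on the other hand, every $(k_g+1) \times (k_g+1)$ minor of $1-\phi(g)$ vanishes in $K$ and hence in $R$, so reduces to zero in $F$, forcing $\rank(1-\bar\phi(g)) = k_g$. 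Thus $\lr(\bar\phi(g)) = \lr(\phi(g)) \geq \delta_g$. For \ref{def:ApprPropMultiplication}, the rank of a matrix over $R$ can only decrease under the ring map $R \to F$ (vanishing minors stay vanishing), so $\lr(\bar\phi(g)\bar\phi(h)\bar\phi(gh)^{-1}) \leq \lr(\phi(g)\phi(h)\phi(gh)^{-1}) \leq \epsilon$. Extending $\bar\phi$ arbitrarily outside $E^2$ yields an $(E,\epsilon)$-homomorphism into $\GL_n(F) \in \GLinG_{\fin}$.

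The main difficulty, such as it is, is conceptual bookkeeping: identifying which polynomial quantities in the matrix entries must be kept invertible during the reduction (the chosen minors $a_g$ and the determinants $\det\phi(g)$) and which can safely be permitted to vanish (all larger minors of $1-\phi(g)$, and every minor associated with the defects). This is precisely the specialization philosophy already employed in the proof of Theorem~\ref{thm:MatrixApproximationOnFieldExtensions}, only now the target of specialization is a finite residue field rather than an algebraically closed overfield.
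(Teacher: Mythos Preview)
Your argument is correct and is precisely the approach the paper has in mind: the paper does not spell out a proof but simply adopts the argument of \cite{arzhantsevapaunescu13}, Theorem~8.2, which is exactly this reduction---pass to the finitely generated $\mathbb{Z}$-subalgebra generated by the relevant matrix entries, invert the finitely many determinants and chosen minors, and reduce modulo a maximal ideal, whose residue field is finite by the Nullstellensatz over $\mathbb{Z}$. Your bookkeeping of which minors must survive (those witnessing \ref{def:ApprPropSingleElement}) versus which are allowed to die (those bounding \ref{def:ApprPropMultiplication}) is exactly right and mirrors the specialization argument in Theorem~\ref{thm:MatrixApproximationOnFieldExtensions}.
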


\begin{corollary}
 Every linearly sofic group is $p$-sofic, where $p$ is a prime or $p=0$.
\end{corollary}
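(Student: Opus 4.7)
The plan is to mimic the argument of \cite{arzhantsevapaunescu13}, Theorem~8.2. The strategy has two stages: first use Theorem~\ref{thm:MatrixApproximationOnFieldExtensions} to reduce the underlying field to a prime field, and then, in the characteristic-zero case, reduce matrices modulo a carefully chosen prime.

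Given a finite $E\subset \Gamma$ and $\epsilon>0$, linear soficity produces an $(E,\epsilon)$-homomorphism into $\GL_n(K)$ for some field $K$; by Theorem~\ref{thm:MatrixApproximationOnFieldExtensions} (whose proof preserves rank lengths) we may in fact assume that $K$ is a prime field. If $K=\mathbb{F}_p$ there is nothing left to do, as the map already lands in $\GLinG_{\fin}$. So the essential case is $K=\mathbb{Q}$, and the idea is to reduce the matrices in $\phi(E^2)$ modulo a generic prime. Concretely, I would fix, for each $g\in E$, an $r_g\times r_g$ submatrix $M_g$ of $1-\phi_g$ with $\det M_g\neq 0$ and $r_g=\rank(1-\phi_g)$. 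Let $S$ be the finite set of primes that either appear in the denominator of some entry of a matrix in $\phi(E^2)$, or divide the numerator of $\det \phi_g$ for some $g\in E^2$, or divide the numerator of $\det M_g$ for some $g\in E$. Pick any prime $p\notin S$; reduction modulo $p$ is well defined on the entries in play, preserves invertibility, and defines a map $\overline{\phi}\colon E^2\to \GL_n(\mathbb{F}_p)$, which I extend arbitrarily to $\Gamma\setminus E^2$.

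It remains to verify that $\overline{\phi}$ is still an $(E,\epsilon)$-homomorphism. The upper bound $\lr(1-\overline{\phi_g\phi_h\phi_{gh}^{-1}})\leq \lr(1-\phi_g\phi_h\phi_{gh}^{-1})\leq \epsilon$ is automatic because the rank of a matrix can only drop under passage to a quotient ring. For the lower bound, non-vanishing of $\det \overline{M_g}$ (guaranteed by $p\notin S$) forces $\rank(1-\overline{\phi_g})\geq r_g$; combined with the reverse automatic inequality this gives $\lr(1-\overline{\phi_g})=\lr(1-\phi_g)\geq \delta_g$. The main obstacle is precisely this rank-preservation step: the rank of a rational matrix can drop under mod-$p$ reduction, and the argument succeeds only because the rank is witnessed by the non-vanishing of a single maximal minor, so that only finitely many primes need to be excluded.
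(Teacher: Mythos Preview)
Your argument correctly establishes that $\Gamma$ has the $\GLinG_{\fin}$-approximation property: for every finite $E$ and $\epsilon>0$ you manufacture an $(E,\epsilon)$-homomorphism into some $\GL_n(\mathbb{F}_p)$. This is exactly the content of Theorem~\ref{thm:MatrixApproximationOnArbitraryFieldExtensions} in the paper (whose proof the paper omits, deferring to \cite{arzhantsevapaunescu13}), and your reduction-mod-$p$ step with the minor-witness for the rank is fine.

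What is missing is the passage from $\GLinG_{\fin}$-approximation to $p$-soficity for a \emph{single} $p$. In your argument the prime $p$ depends on the pair $(E,\epsilon)$: when you write ``If $K=\mathbb{F}_p$ there is nothing left to do'', the $p$ here may change from one pair $(E,\epsilon)$ to the next, and in the $K=\mathbb{Q}$ case your avoided set $S$ also varies, so you have no control on which prime you end up reducing modulo. Having an $(E,\epsilon)$-homomorphism into $\GL_n(\mathbb{F}_{p_{E,\epsilon}})$ for varying $p_{E,\epsilon}$ is strictly weaker than being $p$-sofic for a fixed $p$. The paper closes this gap via Corollary~\ref{cor:FiniteMatrixApproximationImpliesPrimeZeroSofic}: one passes to a metric ultraproduct $\mup{\prod \GL_{n_i}(K_i)}{\mathfrak{u}}$ with $K_i$ finite, and then either $\chr(K_i)=p$ for $\mathfrak{u}$-almost all $i$ (so Theorem~\ref{thm:MatrixApproximationOnAlgebraicFieldExtensions} gives $p$-soficity), or $\lim_{\mathfrak{u}}\chr(K_i)=\infty$ (which is the definition of $0$-soficity). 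You need to append this dichotomy step; without it the claimed conclusion does not follow.
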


We are now confronted with the two similar Theorems~\ref{thm:MatrixApproximationOnFieldExtensions} and \ref{thm:MatrixApproximationOnArbitraryFieldExtensions}. The latter is somewhat stronger insofar as it reduces approximation to finite fields, but it lacks the virtue of the former of preserving the field characteristic. 

We are left with some open questions. The most important one in this context surely is whether linearly sofic groups are sofic. It is also unclear if $p$-sofic groups are $q$-sofic for different primes $p$ and $q$, or if prime sofic or zero sofic groups are $\mathbb{Q}$-sofic. These questions were also adressed in \cite{arzhantsevapaunescu13}, Question~8.6. A positive answer combined with Theorem~7.4, ibid. would for example solve Kaplansky's Direct Finiteness Conjecture for linearly sofic groups (see also ibid, Question~7.9.)

\section{The class of linearly sofic groups}\label{sec:LinearlySoficAndProjectivelyLinearlySoficGroups}
In \cite{elekszabo06} it was proved that the class of sofic groups is closed under taking subgroups, direct products, direct limits, inverse limits, free products and extensions by amenable groups. Later Collins and Dykema proved in \cite{collinsdykema11} that free products of sofic groups amalgamated over monotileably amenable groups are sofic. The unpleasant restriction of monotileability was removed by P\u{a}unescu in \cite{paunescu11} and Elek and Szab\'o in \cite{elekszabo11}. We shall show that similar conclusions as in \cite{elekszabo06} are true for linearly sofic groups, but we will not go as far as treating amalgamated products.

\begin{proposition}\label{prop:SubgroupsDirectProductsAndLimits}
 Let $K$ be a field. The class of  $K$-sofic groups is closed under taking subgroups, inverse limits, direct products and direct limits.
\end{proposition}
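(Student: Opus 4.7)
The proposition combines four closure properties, and the plan is to dispatch each by invoking one of the general permanence results already proved in the paper, instantiated for the class $\GLinG(K) = \{\GL_n(K) : n\in\mathbb{N}\}$ equipped with the rank length.

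For subgroups and inverse limits the statement is immediate from Proposition~\ref{prop:SubgroupsAndInverseLimitsPermanence}, since that result applies to any class of groups with invariant pseudo length functions and $\lr$ is such a function on $\GL_n(K)$. For direct limits I would appeal to Proposition~\ref{prop:DirectLimitsPermanence}; the hypothesis needed there is the amplification property of the approximating class, which has been established for $\GLinG(K)$ with the rank length in Theorem~\ref{thm:AmplificationInMatrixGroups}. So these three cases are essentially citations.

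The direct product case is the only one that requires a small verification, namely that $\GLinG(K)$ with the rank length satisfies the hypothesis of Proposition~\ref{prop:DirectProductsPermanence}. Here I would use the block-diagonal embedding
\[
\GL_{n_1}(K)\times\cdots\times\GL_{n_k}(K)\longrightarrow \GL_{n_1+\cdots+n_k}(K),\qquad (A_1,\ldots,A_k)\longmapsto A_1\oplus\cdots\oplus A_k,
\]
which is a group homomorphism, and choose the weights $w_i \coloneqq n_i$. By iterated application of Lemma~\ref{lem:RankLengthAndJordanLengthForDirectSumsAndTensorProducts}\,\ref{enum:RankLengthOfDirectSum} one has
\[
\lr(A_1\oplus\cdots\oplus A_k) = \frac{\sum_{i=1}^k n_i\,\lr(A_i)}{\sum_{i=1}^k n_i},
\]
which is exactly the weighted pseudo length from the statement of Proposition~\ref{prop:DirectProductsPermanence}. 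Hence the embedding is isometric and that proposition applies to give closure under direct products.

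The only point where one has to be even mildly careful is the direct-product step, and even there the work has already been done in Lemma~\ref{lem:RankLengthAndJordanLengthForDirectSumsAndTensorProducts}\,\ref{enum:RankLengthOfDirectSum}. There is no serious obstacle; the content of the proposition lies entirely in having set up the amplification property for $\GLinG(K)$ in Theorem~\ref{thm:AmplificationInMatrixGroups}, without which the direct-limit case would not go through. Consequently the proof is just a matter of assembling the right references.
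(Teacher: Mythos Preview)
Your proposal is correct and matches the paper's own proof essentially line for line: subgroups and inverse limits via Proposition~\ref{prop:SubgroupsAndInverseLimitsPermanence}, direct limits via Proposition~\ref{prop:DirectLimitsPermanence} combined with the amplification result Theorem~\ref{thm:AmplificationInMatrixGroups}, and direct products via the block-diagonal embedding and Lemma~\ref{lem:RankLengthAndJordanLengthForDirectSumsAndTensorProducts}\,\ref{enum:RankLengthOfDirectSum} to verify the hypothesis of Proposition~\ref{prop:DirectProductsPermanence}. The only cosmetic difference is that the paper phrases the direct-product step as the two-factor case plus induction, while you write out the $k$-fold block sum and weights $w_i=n_i$ directly.
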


\begin{proof}
The claim concerning subgroups and inverse limits is an immediate consequence of Proposition~\ref{prop:SubgroupsAndInverseLimitsPermanence}.

Now suppose $\Gamma$ is a direct limit of $K$-sofic groups $\Gamma_i$. By Proposition~\ref{prop:DirectLimitsPermanence} and Theorem~\ref{thm:AmplificationInMatrixGroups} $\Gamma$ is $K$-sofic.

By Lemma~\ref{lem:RankLengthAndJordanLengthForDirectSumsAndTensorProducts} $\GL_n(K)\times \GL_m(K)$ embeds isometrically into $\GL_{n+m}(K)$ when using the mapping $(A,B)\mapsto A\oplus B$. Thus, with an appeal to induction, Proposition~\ref{prop:DirectProductsPermanence} shows that direct products of $K$-sofic groups are $K$-sofic.
\end{proof}

\begin{lemma}\label{lem:LengthOfTensorProductWithPermutationMatrix}
 Let $K$ be a field and $\pi\in S_n$ a permutation without fixed points. Let $A_i\in \GL_m(K)$ for $i=1\ldots n$, and $P_\pi$ be the permutation matrix in $\GL_n(K)$ corresponding to $\pi$. We define $A\in \GL_{nm}(K)$ by $A(e_i\otimes e_j)\coloneqq e_i\otimes A_i(e_j)$ and linear extension. Then $\rank(1-(P_\pi\otimes \id_m)\circ A)\geq \frac{1}{2}nm$.
\end{lemma}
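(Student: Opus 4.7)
The plan is to compute the action of $(P_\pi\otimes \id_m)\circ A$ explicitly, characterize its fixed points, and use the cycle structure of $\pi$ to bound the dimension of the fixed subspace.

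First, I would unfold the definitions: the operator $M\coloneqq (P_\pi\otimes \id_m)\circ A$ sends $e_i\otimes e_j$ first to $e_i\otimes A_i(e_j)$ via $A$, and then the factor $P_\pi\otimes \id_m$ shifts the first tensor leg so that $M(e_i\otimes v)=e_{\pi(i)}\otimes A_i(v)$ for every $v\in K^m$. A general vector $w=\sum_{i=1}^n e_i\otimes v_i$ (with $v_i\in K^m$) therefore satisfies $M(w)=w$ precisely when $A_i(v_i)=v_{\pi(i)}$ holds for each $i$.

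Next I would decompose $\pi$ into disjoint cycles. Since $\pi$ has no fixed points every cycle $C=(i_1,i_2,\ldots,i_k)$ (with $\pi(i_j)=i_{j+1}$ and indices read mod $k$) has length $k\geq 2$. For a cycle $C$ the constraints $v_{i_{j+1}}=A_{i_j}(v_{i_j})$ determine $v_{i_2},v_{i_3},\ldots,v_{i_k}$ from $v_{i_1}$, and closing the cycle forces
\[v_{i_1}=A_{i_k}A_{i_{k-1}}\cdots A_{i_1}(v_{i_1}).\]
Thus the fixed-point data contributed by $C$ is parameterized by the fixed subspace of this single product in $\GL_m(K)$, which has dimension at most $m$. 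The portion of $K^{nm}$ corresponding to indices in $C$ has dimension $km$, and since $k\geq 2$ we have $m\leq \tfrac{1}{2}km$.

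Finally, summing over all cycles yields
\[\dim\ker(1-M)\leq \sum_{C}m\leq \sum_{C}\tfrac{1}{2}|C|\,m=\tfrac{1}{2}nm,\]
so $\rank(1-M)=nm-\dim\ker(1-M)\geq \tfrac{1}{2}nm$, as desired. There is no real obstacle here: once the action of $M$ is written out, the fixed-point-freeness of $\pi$ and the fact that each cycle's fixed data sits in an $m$-dimensional space (while occupying at least $2m$ dimensions) immediately deliver the factor of $\tfrac{1}{2}$.
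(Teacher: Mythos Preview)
Your argument is correct. Both you and the paper begin by reducing to a single cycle of $\pi$, but from there the methods diverge: the paper bounds the rank from below by exhibiting an explicit nonsingular submatrix of $1-(P_\pi\otimes\id_m)\circ A$ (an $\id_{\frac{1}{2}nm}$ block when the cycle length is even, and a block-triangular matrix with $\id_m$ blocks plus one $-A_j$ block when it is odd), whereas you bound the kernel from above by parameterizing the fixed vectors of $M$ via the fixed subspace of the product $A_{i_k}\cdots A_{i_1}$. Your route is somewhat more conceptual and avoids the even/odd case split entirely; the paper's route is more hands-on but makes the lower bound on the rank visible as a concrete minor. Either way the essential input is the same: a cycle of length $k\geq 2$ occupies $km$ dimensions while contributing at most $m$ dimensions of fixed vectors.
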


\begin{proof}
The matrix $P_\pi\otimes \id_m$ is blockdiagonal, where every block corresponds to a cycle of $\pi$. Hence it suffices to assume that $\pi$ consists of a single cycle. Then it is an elementary observation that if $n$ is even, $1-(P_\pi\otimes \id_m) \circ A$ has a submatrix $\id_{\frac{1}{2}nm}$. If $n$ is odd there is a $\frac{1}{2}(n+1)m\times \frac{1}{2}(n+1)m$-submatrix which is a block matrix having blocks $\id_m$ on the diagonal and one block $-A_j$ for some $j$. In both cases the determinant of the submatrix is non-zero and the claim follows.
\end{proof}

The following theorem is motivated by Item~3 in Theorem~1, \cite{elekszabo06}. A variant of the statement appears in \cite{arzhantsevapaunescu13}, Theorem~9.3 for $\mathbb{C}$-sofic groups, the proof of which can be easily generalized to fit our situation.

\begin{theorem}
 Let $K$ be a field and $\Gamma$ a group such that $N\lhd \Gamma$ is $K$-sofic and $\Gamma/N$ is amenable. Then $\Gamma$ is $K$-sofic.
\end{theorem}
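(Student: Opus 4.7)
The plan is to mirror the classical Elek--Szab\'o argument for sofic groups from \cite{elekszabo06}, adapted to the matrix setting along the lines of Arzhantseva--P\u{a}unescu's treatment of $\mathbb{C}$-sofic groups (\cite{arzhantsevapaunescu13}, Section~9). Given a finite $E \subset \Gamma$ and $\epsilon > 0$, I will construct an $(E, \epsilon)$-homomorphism $\psi \colon \Gamma \to \GL_N(K)$ with a uniform lower bound on $\lr(\psi(g))$ for $g \in E \setminus \{1\}$.

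First, using amenability of $\Gamma/N$, pick a F\o lner set $F \subset \Gamma/N$ with $|\pi(g) F \mathbin{\triangle} F|/|F| < \epsilon'$ for every $g \in E$, where $\pi \colon \Gamma \to \Gamma/N$ is the quotient map. Fix a set-theoretic section $s \colon \Gamma/N \to \Gamma$ with $s(1) = 1$, and for $g \in \Gamma$, $t \in F$ with $\pi(g) t \in F$ define the cocycle $n(g, t) \coloneqq s(\pi(g) t)^{-1} g s(t) \in N$. Let $\tilde{E} \subset N$ be the finite set containing all such $n(g, t)$ for $g \in E^2$ and $t \in F$, all conjugates $s(t)^{-1} g s(t)$ for $g \in E \cap N$, and the products $n(g, \pi(h)t) n(h, t) n(gh, t)^{-1}$ arising from the cocycle identity $n(gh, t) = n(g, \pi(h) t) n(h, t)$. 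By $K$-soficity of $N$ combined with the amplification property (Theorem~\ref{thm:AmplificationInMatrixGroups}), choose an $(\tilde{E}, \epsilon'')$-homomorphism $\phi \colon N \to \GL_m(K)$ with $\lr(\phi(h)) \geq \tfrac{1}{8} - \epsilon''$ for all $h \in \tilde{E} \setminus \{1\}$, where $\epsilon''$ is tiny compared to $\epsilon$.

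Next, for each $g \in \Gamma$ extend the partial injection $t \mapsto \pi(g) t$ on $F \cap \pi(g)^{-1} F$ to a permutation $\sigma_g$ of $F$; since $\pi(g)$ acts by translation (freely whenever $\pi(g) \neq 1$), this extension can be chosen fixed-point-free on all of $F$ except an $O(\epsilon')$-fraction. Define $\psi \colon \Gamma \to \GL_{|F| m}(K)$ by
\[
\psi(g)(e_t \otimes v) \coloneqq e_{\sigma_g(t)} \otimes \phi(n(g, t))(v)
\]
on the F\o lner-good subset of $t$'s and arbitrarily (say by the identity) on the rest, making sure $\psi(g)$ is invertible. The almost-homomorphism estimate $\lr(\psi(g)\psi(h)\psi(gh)^{-1}) \leq \epsilon$ for $g, h \in E$ then follows by a block-wise computation: for all but an $O(\epsilon')$-fraction of $t$'s the two sides agree with $e_{\sigma_{gh}(t)} \otimes \phi(n(g, \pi(h) t)) \phi(n(h, t)) \phi(n(gh, t))^{-1}(v)$, which is close to $e_{\sigma_{gh}(t)} \otimes v$ in rank by the almost-homomorphism property of $\phi$ on $\tilde{E}$ and part~\ref{enum:RankLengthOfDirectSum} of Lemma~\ref{lem:RankLengthAndJordanLengthForDirectSumsAndTensorProducts}.

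It remains to lower-bound $\lr(\psi(g))$ for $g \in E \setminus \{1\}$. If $g \in N$, then $\sigma_g = \id_F$ and $\psi(g)$ is block-diagonal with blocks $\phi(s(t)^{-1} g s(t))$, each non-trivial and in $\tilde{E}$; part~\ref{enum:RankLengthOfDirectSum} of Lemma~\ref{lem:RankLengthAndJordanLengthForDirectSumsAndTensorProducts} gives $\lr(\psi(g)) \geq \tfrac{1}{8} - \epsilon''$. If $\pi(g) \neq 1$, split $F$ into the part $F_1$ where $\sigma_g$ has no fixed point and its small complement $F_0$; restrict $\psi(g)$ to $F_1$, where it takes the form $(P_{\sigma_g|_{F_1}} \otimes \id_m) \circ A$ with $A$ block-diagonal in $\GL(K^{F_1} \otimes K^m)$, and apply Lemma~\ref{lem:LengthOfTensorProductWithPermutationMatrix} to obtain $\rank(1 - \psi(g)|_{F_1}) \geq \tfrac{1}{2}|F_1| m$, hence $\lr(\psi(g)) \geq \tfrac{1}{2}(1 - O(\epsilon'))$. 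The main obstacle is the careful bookkeeping ensuring that $\tilde{E}$ remains finite, that the conjugates $s(t)^{-1} g s(t)$ are all tracked, and that $\epsilon', \epsilon''$ are small enough for the errors not to blow up after summing over the $|F|$ blocks. This is precisely where the amplification property is indispensable: since $\tilde{E}$ grows with $F$ and contains varying conjugates, one cannot rely on element-dependent constants $\delta_h$, only on the uniform $\delta = \tfrac{1}{8}$ provided by Theorem~\ref{thm:AmplificationInMatrixGroups}.
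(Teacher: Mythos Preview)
Your argument is correct and is precisely the generalization of the Arzhantseva--P\u{a}unescu proof that the paper invokes without writing out; the paper gives no details beyond citing \cite{arzhantsevapaunescu13}, Theorem~9.3 and remarking that its proof ``can be easily generalized to fit our situation''. In particular, your use of Lemma~\ref{lem:LengthOfTensorProductWithPermutationMatrix} for the case $\pi(g)\neq 1$ and of the amplification Theorem~\ref{thm:AmplificationInMatrixGroups} for the uniform lower bound on the conjugates $s(t)^{-1}gs(t)$ when $g\in N$ are exactly the ingredients the paper has set up for this purpose.
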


\begin{lemma}\label{lem:AlmostHomomorphismsAdaptedToInverses}
 Let $\Gamma$ be a group with the $\mathscr{G}$-approximation property. Then for any finite subset $1\in E\subset\Gamma$ and $\epsilon>0$ there is an $(E,\epsilon)$-homomorphism $\phi\colon \Gamma\rightarrow G\in\mathscr{G}$ such that $\phi(1)=1$ and  $\phi(g^{-1})=\phi(g)^{-1}$ for all $g\in E$ not of order $2$.
\end{lemma}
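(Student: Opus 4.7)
The plan is to invoke Proposition~\ref{prop:RelaxedAlmostHomomorphisms} to obtain an almost homomorphism $\psi$ with $\psi(1)=1$ on a suitably enlarged finite set, and then override $\psi$ on a symmetric ``flip set'' so that the required relation $\phi(g^{-1})=\phi(g)^{-1}$ holds by construction. The override will only inflate the multiplication defect by a bounded factor.

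Concretely, I would choose $F\coloneqq E\cup E^{-1}\cup E\cdot E\cup E^{-1}\cdot E^{-1}$ and $\epsilon'\coloneqq \epsilon/3$, and pick by Proposition~\ref{prop:RelaxedAlmostHomomorphisms} an $(F,\epsilon')$-homomorphism $\psi\colon\Gamma\to G$ with $\psi(1)=1$. Replacing each $\delta_g$ by $\min(\delta_g,\delta_{g^{-1}})$, one may assume $\delta_g=\delta_{g^{-1}}$ throughout. For every unordered pair $\{g,g^{-1}\}$ with $g\neq g^{-1}$ that meets $E$, choose a representative $r\in E$ (always possible), and let $\mathcal{F}$ be the set of all elements $r^{-1}$ so obtained. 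Put
\[
\phi(h)\coloneqq\begin{cases}\psi(h^{-1})^{-1},&h\in\mathcal{F},\\ \psi(h),&h\notin\mathcal{F}.\end{cases}
\]
Then $\phi(1)=1$, and for $g\in E$ not of order $2$ exactly one of $\{g,g^{-1}\}$ lies in $\mathcal{F}$, from which the identity $\phi(g^{-1})=\phi(g)^{-1}$ is immediate.

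Condition \ref{def:ApprPropSingleElement} follows at once, since $\ell(\phi(g))$ equals either $\ell(\psi(g))\geq\delta_g$ or $\ell(\psi(g^{-1})^{-1})=\ell(\psi(g^{-1}))\geq\delta_{g^{-1}}=\delta_g$ (the second case requires $g^{-1}\in E\subset F$, which is automatic whenever $g\in\mathcal{F}$). For \ref{def:ApprPropMultiplication} introduce the deviation factors $\eta_g\coloneqq \phi(g)\psi(g)^{-1}$; these equal $1$ unless $g\in\mathcal{F}$, in which case $\eta_g=\psi(g^{-1})^{-1}\psi(g)^{-1}$ and so $\ell(\eta_g)=\ell(\psi(g)\psi(g^{-1}))\leq\epsilon'$ using $\psi(1)=1$ and $g,g^{-1}\in F$. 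Expand
\[
\phi(g)\phi(h)\phi(gh)^{-1}=\eta_g\,\psi(g)\,\eta_h\,\psi(h)\,\psi(gh)^{-1}\,\eta_{gh}^{-1},
\]
and apply the triangle inequality: the outer factors $\eta_g$ and $\eta_{gh}^{-1}$ each contribute at most $\epsilon'$, and the middle block $\psi(g)\eta_h\psi(h)\psi(gh)^{-1}$ is either $\psi(g)\psi(h)\psi(gh)^{-1}$ (when $h\notin\mathcal{F}$) or, after the cancellation $\psi(h)^{-1}\psi(h)=1$, the element $\psi(g)\psi(h^{-1})^{-1}\psi(gh)^{-1}$, whose length equals $\ell(\psi(gh)\psi(h^{-1})\psi(g)^{-1})\leq\epsilon'$ by the almost-homomorphism inequality applied at $(gh,h^{-1})\in F\times F$ with product $g$. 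Summing the three contributions yields $\ell(\phi(g)\phi(h)\phi(gh)^{-1})\leq 3\epsilon'=\epsilon$.

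The main obstacle is purely combinatorial: $F$ must be large enough that every almost-homomorphism inequality we invoke---namely at $(g,g^{-1})$, $(gh,(gh)^{-1})$, $(g,h)$, and $(gh,h^{-1})$---lands inside $F\times F$, and $\mathcal{F}$ has to intersect each relevant inverse pair in exactly one element so that the two clauses defining $\phi$ are mutually consistent on the pair $\{g,g^{-1}\}$. With $F$ and $\mathcal{F}$ chosen as above both issues are handled, and the verification collapses into the two branches (flipped or not flipped) displayed above.
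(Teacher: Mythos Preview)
Your argument is correct and follows essentially the same strategy as the paper's proof: start from an almost homomorphism $\psi$ with $\psi(1)=1$ on an enlarged symmetric set, then redefine it on one member of each inverse pair so that $\phi(g^{-1})=\phi(g)^{-1}$ holds by construction. The paper partitions $F=(E\cup E^{-1}\cup\{1\})^2$ into $F_0,F_1,F_{-1}$ and checks the multiplicativity defect case by case with $\epsilon'=\epsilon/2$; you instead encode the modification through the correction factors $\eta_g=\phi(g)\psi(g)^{-1}$ and bound three contributions uniformly with $\epsilon'=\epsilon/3$. A small bonus of your organisation is that the verification you display uses only \ref{def:LengthFunctionSymmetry} and \ref{def:LengthFunctionTriangle}, whereas the paper's case $g,h,gh\in F_{-1}$ invokes invariance of $\ell$; in the paper's setting the length functions are invariant anyway, so this is cosmetic.
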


\begin{proof}
 Let $F\coloneqq (E\cup E^{-1}\cup\{1\})^2$ and $\psi\colon \Gamma \rightarrow G$ be an $(F,\frac{1}{2}\epsilon)$-homomorphism into a group $G\in\mathscr{G}$ with invariant pseudo length function $\ell$.
By Proposition~\ref{prop:RelaxedAlmostHomomorphisms} without loss of generality $\psi(1)=1$. We partition $F$ into three subsets as follows: Let $F_0$ be the set of all $g\in F$ of order $2$ or $g=1$. Then we partition $F\setminus F_0$ into $F_1$ and $F_{-1}$ such that $g\in F_{-1}$ implies $g^{-1}\in F_1$, or equivalently $g\in F_1$ implies $g^{-1}\in F_{-1}$.
 We define 
\[\phi(g)\coloneqq \left\{\begin{array}{l l}
                                   \psi(g), &g\in F_0\cup F_1,\\
                                   \psi(g^{-1})^{-1}, &g\in F_{-1}.
                                  \end{array}\right.
\]
Then obviously $g\in F_{-1}$ implies $\phi(g)^{-1}=\phi(g^{-1})$, and the same holds for $g\in F_1$, since in this case $g^{-1}\in F_{-1}$.

We must prove that $\phi$ is an $(E,\epsilon)$-homomorphism. It is enough to show that $g,h\in E$ implies $\ell(\phi(g)\phi(h)\phi(gh)^{-1})\leq \epsilon$. The case of $g,h,gh\in F_0\cup F_1$ is clear. The case of $g,h,gh\in F_{-1}$ reduces to
\[\ell(\phi(g)\phi(h)\phi(gh)^{-1})=\ell(\psi(h^{-1})\psi(g^{-1})\psi(h^{-1}g^{-1})^{-1})\leq \tfrac{1}{2}\epsilon,\]
where we used the invariance of $\ell$.
Let $g,h\in F_0\cup F_1$, $gh\in F_{-1}$. Then
\begin{align*}
&\ell(\phi(g)\phi(h)\phi(gh)^{-1})\\
&\quad\quad =\ell(\psi(g)\psi(h)\cdot \psi(gh)^{-1}\psi(gh)\cdot \psi((gh)^{-1}))\\
&\quad\quad \leq \ell(\psi(g)\psi(h)\psi(gh)^{-1})+\ell(\psi(gh)\psi((gh)^{-1}))\\
&\quad\quad \leq \tfrac{1}{2}\epsilon +\tfrac{1}{2}\epsilon=\epsilon.
\end{align*}
If $g\in F_0\cup F_1$ and $h, gh\in F_{-1}$ we estimate
\begin{align*}
&\ell(\phi(g)\phi(h)\phi(gh)^{-1})\\
&\quad\quad =\ell(\psi(g)\cdot \psi(g^{-1})\psi(g^{-1})^{-1}\cdot \psi(h^{-1})^{-1}\psi((gh^{-1}))\\ 
&\quad\quad \leq \ell(\psi(g)\psi(g^{-1}))+\ell(\psi(h^{-1})\psi(g^{-1})\psi(h^{-1}g^{-1})^{-1})\\
&\quad\quad \leq \tfrac{1}{2}\epsilon +\tfrac{1}{2}\epsilon=\epsilon. 
\end{align*}
Finally assume $g, gh\in F_0\cup F_1$ and $h\in F_{-1}$. Then 
\begin{align*}
&\ell(\phi(g)\phi(h)\phi(gh)^{-1})\\
&\quad\quad =\ell(\psi(g)\psi(h^{-1})^{-1}\cdot \psi(h)^{-1}\psi(h)\cdot \psi(gh)^{-1})\\
&\quad\quad \leq \ell(\psi(h^{-1})\psi(h))+\ell(\psi(g)\psi(h)\psi(gh)^{-1})\\
&\quad\quad \leq \tfrac{1}{2}\epsilon +\tfrac{1}{2}\epsilon=\epsilon
\end{align*}
holds. The remaining cases follow analogously.
\end{proof}

\begin{lemma}\label{lem:FreeProductsOfFinitelyGeneratedProjectivelyLinearlySoficGroups}
 Let $K$ be a field and $\Gamma_1$ and $\Gamma_2$ finitely generated $K$-sofic groups. Then the free product $\Gamma_1*\Gamma_2$ is $K$-sofic.
\end{lemma}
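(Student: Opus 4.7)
The plan is to adapt, to arbitrary fields $K$, the Arzhantseva-P\u{a}unescu argument for $\mathbb{C}$-sofic groups (Theorem~9.3 in \cite{arzhantsevapaunescu13}), which is itself the linear-sofic analogue of Elek-Szab\'o's proof for sofic groups. By Theorem~\ref{thm:EquivalenceOfProjectivelyLinearlyAndLinearlySofic} we may work with the rank length. Given a finite $E\subset \Gamma_1*\Gamma_2$ and $\epsilon>0$, let $E_i\subset \Gamma_i$ be the set of letters appearing in the reduced forms of elements of $E\cdot E$, and set $F_i\coloneqq (E_i\cup E_i^{-1}\cup\{1\})^2$. Applying the amplification Theorem~\ref{thm:AmplificationInMatrixGroups} together with Lemma~\ref{lem:AlmostHomomorphismsAdaptedToInverses}, I first obtain $(F_i,\epsilon')$-homomorphisms $\phi_i\colon \Gamma_i\rightarrow \GL_{n_i}(K)$, for a parameter $\epsilon'>0$ to be fixed later, with $\phi_i(1)=1$, $\phi_i(g^{-1})=\phi_i(g)^{-1}$ and $\lr(\phi_i(g))\geq \tfrac{1}{8}-\epsilon'$ for $g\in F_i\setminus\{1\}$. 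Tensoring each $\phi_i$ with an identity matrix, which leaves the rank length invariant, allows us to assume $n_1=n_2=n$.

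Next I would build a combined almost-homomorphism $\Phi\colon \Gamma_1*\Gamma_2\rightarrow \GL_N(K)$ with $N=nm$ for a large auxiliary integer $m$. Writing $V=K^n\otimes K^m$, I set
\[\Phi_1(g)\coloneqq \phi_1(g)\otimes I_m,\qquad \Phi_2(h)\coloneqq U\bigl(\phi_2(h)\otimes I_m\bigr)U^{-1},\]
where $U\in \GL_N(K)$ is a suitably chosen permutation matrix (thus defined over any $K$). On any $w\in \Gamma_1*\Gamma_2$ whose reduced letters all belong to $E_1\cup E_2$ I define $\Phi(w)$ to be the product of the corresponding $\Phi_i$ values in the order dictated by the reduced form, extending $\Phi$ arbitrarily elsewhere. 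Approximate multiplicativity on $E$ is then immediate: within either factor $\Gamma_i$ it is inherited from $\phi_i$, and for products of letters from different factors it is satisfied exactly by construction, the rank bounds being controlled through Lemma~\ref{lem:RankLengthAndJordanLengthForDirectSumsAndTensorProducts}.

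The hard part is the uniform positive lower bound on $\lr(\Phi(w))$ for non-trivial reduced words $w$ coming from $E$. For a single-letter $w$, $\Phi(w)$ equals, up to conjugation by $U$, $\phi_i(w)\otimes I_m$, whose rank length is that of $\phi_i(w)$ and is therefore at least $\tfrac{1}{8}-\epsilon'$. For reduced words of length at least two the strategy, following Arzhantseva-P\u{a}unescu, is to take $U=P_\tau$ for $\tau\in S_N$ uniformly random and show that with positive probability every such $\Phi(w)$ admits a decomposition of the form $(P_\pi\otimes \id_{m'})\circ A$ satisfying the hypotheses of Lemma~\ref{lem:LengthOfTensorProductWithPermutationMatrix} with $\pi$ a fixed-point-free permutation, so that the lemma yields $\lr(\Phi(w))\geq \tfrac{1}{2}$. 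The combinatorial core---exhibiting a single $\tau$ that works simultaneously for all the finitely many reduced words arising from $E$---is the main obstacle; finite generation of $\Gamma_1$ and $\Gamma_2$ enters by keeping the family of reduced words of bounded size, so that $m$ may be chosen large enough to absorb the accumulated probabilistic error. With the lower bound in place, a final choice of $\epsilon'$ small enough relative to $\epsilon$ and the resulting constants produces an $(E,\epsilon)$-homomorphism $\Gamma_1*\Gamma_2\rightarrow \GL_N(K)$.
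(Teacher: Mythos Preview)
Your approach diverges substantially from the paper's, and the crucial lower-bound step does not go through as sketched. The paper does \emph{not} use a random conjugator; instead it separates a permutation part from a linear part. Concretely, it applies Malcev's theorem to the finitely generated linear group $\langle \phi'_g : g\in A^{2r}\rangle\leq \GL_{2n}(K)$ (and similarly for $\psi'$) to obtain finite quotients $H_1$, $H_2$, and then uses residual finiteness of $H_1*H_2$ (Gruenberg) to produce a finite group $G$ in which every short reduced word survives. The almost homomorphism on $\Gamma_1*\Gamma_2$ is then
\[
\zeta_g=\sigma_g\circ \tilde{\phi}_{a_1\cdots a_k}\circ \tilde{\psi}_{b_1\cdots b_k}
\]
acting on $K^G\otimes(K^n\oplus K^m)$, where $\sigma_g$ is the permutation of $K^G$ coming from left multiplication by the image of $g$ in $G$, and $\tilde{\phi}$, $\tilde{\psi}$ act only on the second tensor factor and are therefore block-diagonal over $K^G$. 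Thus $\zeta_g$ is \emph{literally} of the form $(P_\pi\otimes\id)\circ A$ with $A$ block-diagonal; since a nontrivial element of $G$ acts without fixed points by left translation, Lemma~\ref{lem:LengthOfTensorProductWithPermutationMatrix} applies on the nose and yields the bound $\tfrac{1}{2}$.

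In your construction, by contrast, $\Phi(w)$ is an alternating product of matrices $\phi_1(a_i)\otimes I_m$ and $U(\phi_2(b_j)\otimes I_m)U^{-1}$ with $U$ a permutation matrix. There is no reason for such a product to decompose as $(P_\pi\otimes\id_{m'})\circ A$ with $A$ block-diagonal and $\pi$ fixed-point-free: the $\phi_i(g)$ are generic elements of $\GL_n(K)$, not permutation matrices, and conjugation by $U$ destroys any tensor-block structure rather than creating one. So the appeal to Lemma~\ref{lem:LengthOfTensorProductWithPermutationMatrix} is unjustified. The probabilistic trick you have in mind is the Elek--Szab\'o argument for sofic groups, which works by counting fixed points of compositions of \emph{permutations}; the linear analogue would require controlling the $1$-eigenspace of products of arbitrary invertible matrices intertwined by random permutations, and no such estimate is available over a general field $K$ (in particular over finite fields). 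Incidentally, Theorem~9.3 of \cite{arzhantsevapaunescu13} treats extensions with amenable quotient, not free products; the free-product case is new to this paper, and the residual-finiteness detour via Malcev and Gruenberg is exactly the missing idea.
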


\begin{proof} 
Suppose $\Gamma_1$ and $\Gamma_2$ are generated by finite symmetric sets $A$ and $B$, respectively. 
 Let $\phi\colon \Gamma_1\rightarrow \GL_n(K)$ be an $(A^{r},\frac{1}{2}\epsilon)$-homomorphism and $\psi\colon \Gamma_2\rightarrow \GL_m(K)$ a $(B^{r},\frac{1}{2}\epsilon)$-homomorphism. By Lemma~\ref{lem:AlmostHomomorphismsAdaptedToInverses} we can assume without loss of generality that $\phi_1=1$, $\psi_1=1$, and $\phi_{g^{-1}}=\phi_g^{-1}$ and $\psi_{h^{-1}}=\psi_h^{-1}$ for all $g\in A^{2r}$ and $h\in B^{2r}$ not of order $2$. We define a mapping 
$\phi'\colon \Gamma_1\rightarrow \GL_{2n}(K)$ by 
\[\phi'_g\coloneqq \left(\begin{matrix} \phi_g & \\ & \phi_g \end{matrix}\right)\]
 if $g^2\neq 1$ and 
\[\phi'_g\coloneqq \left(\begin{matrix} & \phi_g^{-1}\\ \phi_g &\end{matrix}\right)\]
otherwise. We define $\psi'\colon\Gamma_2\rightarrow \GL_{2m}(K)$ analogously. Note that $\phi'$ and $\psi'$ no longer need to be almost homomorphisms. Nevertheless $\phi'_{g^{-1}}=(\phi'_g)^{-1}$ holds for all $g\in A^{2r}$ and $\psi'_{h^{-1}}=(\psi'_h)^{-1}$ holds for all $h\in B^{2r}$.                          

 Now consider the subgroup of $\GL_{2n}(K)$ generated by $\set{\phi'_g}{g\in A^{2r}}$. By Malcev's Theorem this group is residually finite and hence there is a finite group $H_1$ and a homomorphism 
\[\pi_1\colon\left<\set{\phi'_g}{g\in A^{2r}}\right>\rightarrow H_1,\]
the restriction of which to $\set{\phi'_g}{g\in A^{2r}}$ is injective. Analogously there are $H_2$ and
\[\pi_2\colon\left<\set{\psi'_h}{h\in B^{2r}}\right>\rightarrow H_2.\]
Since the free product of finite groups is residually finite by \cite{gruenberg57}, Theorem~4.1, there is a finite group $G$ and a homomorphism $\pi\colon H_1*H_2\rightarrow G$ such that $\pi(h_1g_1\ldots h_kg_k)\neq 1$ for all reduced words $h_1g_1\ldots h_kg_k$ of length $2k$ in $H_1*H_2$, $h_i\in H_1$ and $g_i\in H_2$, and $k\leq 2r$. For elements $g\in A^{2r}$ or $h\in B^{2r}$ we write 
\[\overline{g}\coloneqq \pi(\pi_1(\phi'_g))\in G,\quad \quad \overline{h}\coloneqq \pi(\pi_2(\psi'_h))\in G.\]
To summarize, $a,a_i\in A^{2r}$ and $b,b_i\in B^{2r}$ imply $\overline{a^{-1}}=\overline{a}^{-1}$ and $\overline{b^{-1}}=\overline{b}^{-1}$, and $\overline{a_1}\overline{b_1}\ldots \overline{a_k}\overline{b_k}\neq 1$, whenever $k\leq 2r$ and $a_i\neq 1$, $b_j\neq 1$ for $i\neq 1$, $j\neq k$.

Consider the vector space
\[V\coloneqq K^G\otimes (K^n\oplus K^m)\]
with the basis of standard vectors $e_x\otimes e_i$, where $x\in G$, $i=1\ldots n+m$.
If $g\in A^{2r}$ we define
\[\tilde{\phi}_g\coloneqq \id_{K^G}\otimes (\phi_g \oplus \id_m)\]
and similarly for $h\in B^{2r}$
\[\tilde{\psi}_h\coloneqq \id_{K^G}\otimes (\id_n \oplus \psi_h).\]
Let $g=a_1b_1\ldots a_kb_k$ be a reduced word in $(A\cup B)^{2r}\setminus \{1\}\subset \Gamma_1*\Gamma_2$, where $a_i\in \Gamma_1$ and $b_i\in \Gamma_2$ for all $i=1\ldots k$. Then $a_1\ldots a_k\in A^{2r}$ and $b_1\ldots b_k\in B^{2r}$. We let
\[\sigma_g(e_x\otimes e_i)\coloneqq e_{\overline{a_1}\overline{b_1}\ldots \overline{a_k}\overline{b_k}x}\otimes e_i.\]
Then $\sigma_g$ commutes with $\tilde{\phi}_a$ and $\tilde{\psi}_b$ for all $g\in (A\cup B)^{2r}$, $a\in A^{2r}$ and $b\in B^{2r}$. Now we define $\zeta_g$ by
\[\zeta_g\coloneqq \sigma_g\circ \tilde{\phi}_{a_1\ldots a_k}\circ \tilde{\psi}_{b_1\ldots b_k}\]
and linear extension. We let $\zeta_1\coloneqq \id_V$ and extend $\zeta\colon g\mapsto \zeta_g$ arbitrarily to the whole of $\Gamma$ to obtain a mapping $\zeta\colon \Gamma\rightarrow \GL(V)$.

Let $g=a_1b_1\ldots a_kb_k$ and $h=c_1d_1\ldots c_ld_l$ be reduced words in $(A\cup B)^r\setminus\{1\}\subset \Gamma_1*\Gamma_2$. We abbreviate $a\coloneqq a_1\ldots a_k$, $b\coloneqq b_1\ldots b_k$, $c\coloneqq c_1\ldots c_l$ and  $d\coloneqq d_1\ldots d_l$. Suppose when multiplying $g$ and $h$ cancellations occur, i.e.
\[gh=a_1b_1\ldots b_{k-s-1}(a_{k-s}c_{1+s})d_{1+s}\ldots c_ld_l\]
or
\[gh=a_1b_1\ldots a_{k-t}(b_{k-t}d_{1+t})c_{2+t}\ldots c_ld_l.\]
Then without loss of generality in the first case $b_k=1$, $a_{k-i}=c_{1+i}^{-1}$ and $b_{k-j}=d_{j}^{-1}$ for all $i=0\ldots s$ and $j=1\ldots s$. Therefore
$\overline{b_k}=1$, $\overline{a_{k-i}}=\overline{c_{1+i}}^{-1}$ and $\overline{b_{k-j}}=\overline{d_{j}}^{-1}$ in $G$. Thus when multiplying $\overline{a_1}\overline{b_1}\ldots \overline{a_k}\overline{b_k}$ and $\overline{c_1}\overline{d_1}\ldots \overline{c_l}\overline{d_l}$, the same cancellations as in $gh$ occur (and maybe more). This means $\sigma_g\sigma_h=\sigma_{gh}$.

Now by the definition of $\zeta$ we readily obtain
\[\zeta_g\zeta_h-\zeta_{gh}=\sigma_{gh}\circ (\tilde{\phi}_a\circ\tilde{\psi}_b\circ \tilde{\phi}_c\circ\tilde{\psi}_d-\tilde{\phi}_{ac}\circ\tilde{\psi}_{bd}).\]
Because $\sigma_{gh}$ has full rank,
\begin{align*}
\lj(\zeta_g\zeta_h-\zeta_{gh})&=\lj(\tilde{\phi}_a\circ\tilde{\psi}_b\circ \tilde{\phi}_c\circ\tilde{\psi}_d-\tilde{\phi}_{ac}\circ\tilde{\psi}_{bd})\\
&=\lj(\phi_a\oplus \psi_b \circ \phi_c\oplus \psi_d -\phi_{ac}\oplus \psi_{bd})\\
&\leq \frac{n}{n+m}\lr(\phi_a\phi_c-\phi_{ac})+\frac{m}{n+m}\lr(\psi_b\psi_d-\psi_{bd}),
\end{align*}
where the inequality follows from \ref{enum:RankLengthOfDirectSum} in Lemma~\ref{lem:RankLengthAndJordanLengthForDirectSumsAndTensorProducts}.
Since $a,c\in A^{r}$ and $b,d\in B^{r}$, the right hand side is less than $\epsilon$.

It is clear that $\sigma_g$ acts as a permutation matrix modulo $K^n\oplus K^m$. If $1\neq g=a_1b_1\ldots a_kb_k$ is a reduced word of length not more than $2r$ in letters from $A\cup B$, by construction $\overline{a_1}\overline{b_1}\ldots \overline{a_k}\overline{b_k}\neq 1$ in $G$. Since the action of $\sigma_g$ is determined by the permutation action of $\overline{a_1}\overline{b_1}\ldots \overline{a_k}\overline{b_k}$ on $G$, we can use Lemma~\ref{lem:LengthOfTensorProductWithPermutationMatrix} to conclude that $\lj(\zeta_g)\geq \frac{1}{2}$.
\end{proof}

\begin{theorem}\label{thm:FreeProductsOfLinearlySoficGroups}
 Let $K$ be a field. Then the free product of $K$-sofic groups is $K$-sofic.
\end{theorem}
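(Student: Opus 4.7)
The plan is to bootstrap from Lemma~\ref{lem:FreeProductsOfFinitelyGeneratedProjectivelyLinearlySoficGroups}, which already handles two finitely generated factors, to the general statement by a purely formal direct-limit argument. No new length-function arithmetic is needed; all that is required is to organise the free product as a suitable direct system.

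First I would reduce to the case of two factors. For an arbitrary family $(\Gamma_i)_{i\in I}$ of $K$-sofic groups, the free product $*_{i\in I}\Gamma_i$ is the direct limit, over the directed set of finite subsets $J\subseteq I$, of the partial free products $*_{i\in J}\Gamma_i$. By the direct-limit clause of Proposition~\ref{prop:SubgroupsDirectProductsAndLimits} it suffices to treat finite $I$, and then by associativity of the free product and induction on $|I|$ it is enough to settle the case of two $K$-sofic groups $\Gamma_1$ and $\Gamma_2$.

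For two factors I would write
\[
\Gamma_1*\Gamma_2 \;=\; \varinjlim_{(H_1,H_2)}\, H_1*H_2,
\]
where the direct limit is taken over the directed set of pairs $(H_1,H_2)$ of finitely generated subgroups $H_j\subseteq \Gamma_j$, ordered componentwise by inclusion, with the obvious inclusions as transition maps. Any finite subset of $\Gamma_1*\Gamma_2$ uses only finitely many letters from each factor, so this really does exhibit $\Gamma_1*\Gamma_2$ as such a direct limit. Each $H_j$ is $K$-sofic by subgroup closure (Proposition~\ref{prop:SubgroupsDirectProductsAndLimits}), hence every term $H_1*H_2$ is $K$-sofic by Lemma~\ref{lem:FreeProductsOfFinitelyGeneratedProjectivelyLinearlySoficGroups}, and a second application of Proposition~\ref{prop:SubgroupsDirectProductsAndLimits} (now to direct limits) gives that $\Gamma_1*\Gamma_2$ is $K$-sofic.

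I do not expect any real obstacle here: the genuine work — the tensor-product-with-permutation construction, the control of Jordan length via Lemma~\ref{lem:LengthOfTensorProductWithPermutationMatrix}, and the use of residual finiteness of free products of finite groups — has already been carried out in Lemma~\ref{lem:FreeProductsOfFinitelyGeneratedProjectivelyLinearlySoficGroups}. What remains is only the standard local-to-global passage, which is furnished by the permanence properties of the class of $K$-sofic groups established earlier. The mild delicacy to keep in mind is simply that direct-limit closure crucially invokes the amplification property of Theorem~\ref{thm:AmplificationInMatrixGroups} (via Proposition~\ref{prop:DirectLimitsPermanence}), without which the constants $\delta_g$ would not be uniform in the directed system.
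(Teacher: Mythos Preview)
Your proposal is correct and follows essentially the same route as the paper: reduce to finitely generated factors, invoke Lemma~\ref{lem:FreeProductsOfFinitelyGeneratedProjectivelyLinearlySoficGroups}, and pass to the direct limit via Proposition~\ref{prop:SubgroupsDirectProductsAndLimits}. The only addition is your explicit reduction from an arbitrary family of factors to the two-factor case, which the paper leaves implicit (its proof treats only $\Gamma_1*\Gamma_2$); your observation about the role of amplification in the direct-limit step is also accurate.
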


\begin{proof}
 Let $\Gamma_1$ and $\Gamma_2$ be $K$-sofic groups. Then $\Gamma_i*\Gamma_2$ is a direct limit of groups $\Gamma_1^{(i)}*\Gamma_2^{(i)}$, where $\Gamma_j^{(i)}$ is a finitely generated subgroup of $\Gamma_j$ for every $i$ and $j=1,2$. As subgroups of $\Gamma_j$, the groups $\Gamma_j^{(i)}$ are  $K$-sofic, and since they are finitely generated, by Lemma~\ref{lem:FreeProductsOfFinitelyGeneratedProjectivelyLinearlySoficGroups} $\Gamma_1^{(i)}*\Gamma_2^{(i)}$ is  $K$-sofic for all $i$. At last Proposition~\ref{prop:SubgroupsDirectProductsAndLimits} shows that the direct limit $\Gamma_1*\Gamma_2$ is $K$-sofic.
\end{proof}

\small

\vspace{0.5cm}

\textit{Abel Stolz, Universit\"at Leipzig, Augustusplatz 10, 04109 Leipzig, Germany \\}
\verb|abel.stolz@math.uni-leipzig.de|\\

\end{document}